\definecolor{labelkey}{rgb}{0,0,1}
\definecolor{Red}{rgb}{0.7,0,0.1}
\definecolor{Green}{rgb}{0,0.7,0}
\def\namedlabel#1#2{\begingroup
    #2%
    \def\@currentlabel{#2}%
    \phantomsection\label{#1}\endgroup
}
\newtheorem{Thm}{Theorem}[section]
\newtheorem{Prop}[Thm]{Proposition}
\newtheorem{Lem}[Thm]{Lemma}
\newtheorem{Rmk}{Remark}[section]
\numberwithin{equation}{section}
\newcommand{\BL}{B_\veps(t)}
\newcommand{\de}{\delta} 
\newcommand{\De}{\Delta}
\newcommand{\eps}{\epsilon}
\newcommand{\veps}{\varepsilon}
\newcommand{\ph}{\varphi}
\newcommand{\lam}{\lambda}
\newcommand{\tht}{\theta}
\newcommand{\om}{\omega}
\newcommand{\Om}{\Omega}
\newcommand{\ze}{\zeta}
\newcommand{\Gr}{\mathscr{G}}
\newcommand{\PP}{\mathbb{P}}
\def \AA {\mathbb{A}}
\def \tN {\mathrm{N}}
\newcommand{\BB}{\mathbb{B}}
\newcommand{\TT}{\mathbb{T}}
\newcommand{\QQ}{\mathbb{Q}}
\newcommand{\lb}{\big\langle}
\newcommand{\rb}{\big\rangle}
\newcommand{\rrb}{\rrbracket}
\newcommand{\llb}{\llbracket}
\newcommand{\lbn}{\left\langle}
\newcommand{\rbn}{\right\rangle}
\newcommand{\goesto}{\rightarrow}
\newcommand{\Sob}[2]{\lVert#1\rVert_{#2}}
\newcommand{\nrm}[1]{\lVert#1\rVert}
\newcommand{\bdy}{\partial}
\newcommand{\til}[1]{\widetilde{#1}}
\newcommand{\Acal}{\mathcal{A}}
\newcommand{\Tr}{\mathrm{Tr}}
\DeclareMathOperator{\vspan}{span}
\DeclareMathOperator{\dist}{dist}
\title[Dimension of the global attractor of 2D NSE on
$\beta$-plane]{Upper bounds on the dimension of the global attractor of the 2D {N}avier-{S}tokes equations on the $\beta-$plane}
\author{Aseel Farhat, Anuj Kumar, Vincent R. Martinez}
\begin{document}

\begin{abstract} 
This article establishes estimates on the dimension of the global attractor of the two-dimensional rotating Navier-Stokes equation for viscous, incompressible fluids on the $\beta$-plane. Previous results in this setting  by M.A.H. Al-Jaboori and D. Wirosoetisno (2011) had proved that the global attractor collapses to a single point that depends only the {latitudinal} coordinate, i.e., \textit{zonal flow}, when the rotation is sufficiently fast. However, an explicit quantification of the complexity of the global attractor in terms of $\beta$ had remained open. In this paper, such estimates are established which are valid across a wide regime of rotation rates and are consistent with the dynamically degenerate regime previously identified. Additionally, a decomposition of solutions is established detailing the asymptotic behavior of the solutions in the limit of large rotation.
\end{abstract}

\maketitle

{
\noindent \small {\it {\bf Keywords:} $\beta$-plane approximation, 2D Navier-Stokes equations, Coriolis force, fast rotation, small Rossby number, global attractor, Hausdorff dimension, long-time behaviour.}
   \\
{\it {\bf MSC 2010 Classifications:} 35B41, 35B45, 37L30, 76D05, 76U05.} 
}

\section{Introduction}
In this paper, we consider the evolution of a viscous, incompressible fluid in the presence of a Coriolis force and study properties of its long-time behavior.  In geophysics, the Coriolis effect is commonly approximated to first order by a linear expression of the form $f_0+\beta x_2$ where, $x_2$ represents latitude and $\beta$ {represents the local gradient of the Coriolis parameter.}
This is known as the $\beta$-plane approximation and it accounts for the geophysical fact that for a fluid evolving on a rotating spherical body, the subsequent Coriolis force that acts on the fluid is latitude-dependent. In the study of such flows, a typical simplification for describing surface waves is to assume constant vertical motion \cite{Ped87}. If one additionally models the effects of surface stress and friction, one may arrive at the externally driven, two-dimensional (2D) Navier-Stokes equations (NSE) for an incompressible fluid in the $\beta-$plane approximation. Over a domain ${\Omega}=[0,L]\times[-L/2,L/2]$ and equipped with periodic boundary conditions, these equations are given by
    \begin{align}\label{eq:NSE:rotating:d}
        \partial_tu+(u\cdot \nabla) u=\nu\Delta u -\beta x_2 u^{\perp}-\nabla p+F,\quad \nabla\cdotp u=0,
    \end{align}
where $\nu$ denotes the kinematic viscosity, $u=(u^1,u^2)$ the velocity vector field of the fluid, $u^\perp=(-u^2,u^1)$ its rotation $90^\circ$ counter-clockwise, $p$ the scalar pressure field, and $F=(F^1,F^2)$ is a given, time-independent external driving force. Upon appropriately re-scaling \eqref{eq:NSE:rotating:d}, {one obtains the following} non-dimensionalized form {of the system}:
    \begin{align}\label{eq:NSE:rotating}
        \partial_{t'} u'+(u'\cdot \nabla') u'=\Delta' u' - \frac{1}{\varepsilon} x'_2 (u')^{\perp}-\nabla' p'+\Gr F',\quad \nabla'\cdot u'=0,
    \end{align}
posed over the re-scaled domain $\Om'=[0,2\pi]\times[-\pi,\pi]$, where each of the quantities $t',x', u', p', F'$ and operators $\partial_{t'}, \nabla', \De'$ are now dimensionless. The dimensionless parameters $\veps$ and $\Gr$ are referred to as the \textit{Rossby number} and \textit{Grashof number}, respectively, where the Grashof number is defined by:
    \begin{align}\label{def:Gr}
        \Gr=\frac{\ell^2\Sob{F}{L^2}}{\nu^2},
    \end{align}
for some fixed length scale $\ell$. For the remainder of the manuscript, we will drop the notation $'$. Additionally, we will denote by $A\lesssim B$ to mean that $A\le c B$ holds for some generic positive non-dimensional constant $c$ which is independent of $\Gr$ and $\varepsilon$.

One of the main goals of the present study is to obtain an upper bound on the Hausdorff dimension of the global attractor, $\Acal^\veps$, corresponding to \eqref{eq:NSE:rotating}, that explicitly exhibits dependence on the Rossby number. We point out that due to the skew-symmetry of $u^\perp$, the presence of the Coriolis force plays no apparent role in the approach of \cite{ConstantinFoiasTemam1988} since it vanishes identically in obtaining lower bounds on the trace of the linearized evolution. Similarly, the same apriori bounds {in $L^2$-based Sobolev spaces} obtained for the non-rotating case are as well available for \eqref{eq:NSE:rotating} (see, for instance, \cite{MustafaDjoko2011}). Thus,
the existence of a finite-dimensional global attractor for \eqref{eq:NSE:rotating} follows exactly as in the non-rotating case. Indeed, in \cite{ConstantinFoiasTemam1988}, P. Constantin, C. Foias, and R. Temam obtain the following estimate, originally in the context of the non-rotating case:
    \begin{align}\label{est:CFT}
        \text{dim}_H \Acal^\veps\lesssim \Gr^{{2}/{3}}(1+\log \Gr)^{{1}/{3}}.
    \end{align}
This is the best unconditional estimate on the dimension of the global attractor and it is known to be sharp \cite{Liu1993, IlyinMiranvilleTiti2004}. Thus, quantifying the effect of the Coriolis force on the size of the dimension of the global attractor requires one to exploit the skew-symmetry in a different way.

{It follows from \eqref{eq:NSE:rotating:d} that the Rossby number only directly affects the non-zonal (in 2D) or baroclinic (in 3D) component of the dynamics. It is well-known, however, that the effect on those components becomes more prominent in the regime of small Rossby number, where the solution has a tendency to become less non-zonal (in 2D), that is, predominantly zonal (a function only of the latitude $x_2$) \cite{MustafaDjoko2011, Djoko2019}, or less baroclinic (in 3D), that is, predominantly barotropic (a function only of the horizontal variables, $x_1, x_2$) \cite{GallagherSaint-Raymond2006}}.  This mechanism was given mathematical clarity by A. Babin, A. Mahalov, B. Nicolaenko in their celebrated work \cite{BabinMahalovNicoleanko_Euler_NSE_rotating}, wherein it was found that the regime of fast rotation offered a stabilizing mechanism that one could exploit in order to effectively reduce the dimensionality of the three-dimensional (3D) Euler and Navier-Stokes systems to either extend the life-span of solutions (in the case of the 3D Euler equations) or establish global-in-time regularity of smooth solutions (in the case of the 3D NSE) (see also the work by Schochet \cite{Schochet1988}). The Coriolis force also introduces dispersive effects into the system. This point of view was developed in, {for instance}, \cite{BabinIlyinTiti2011, KostiankoTitiZelik2018} in the context of the 1D KdV equation, 1D complex Ginzburg-Landau equation, and the 1D Kuramoto-Sivashinsky equation. The stabilization effects arising from the dispersive nature of the rotation were also studied in \cite{ElgindiWidmayer2017} in the context of the 2D Euler equations on the $\beta$-plane. We refer the reader to \cite{CheminDesjardinsGallagherGrenier2006} for a comprehensive study of the dispersive effects of rotation in geophysics.

This mechanism was subsequently exploited by M.A.H. Al-Jaboori and D. Wirosoetisno in \cite{MustafaDjoko2011} in the context of \eqref{eq:NSE:rotating} to show that in the regime of small Rossby number, $\Acal^\veps$ collapses to a single point. Specifically, it directly follows from the proof in \cite{MustafaDjoko2011} that the smallness condition on the Rossby number is dictated by the condition $C\Sob{\widetilde{\om}}{L^2}\lesssim 1$ (see Section 4, \cite{MustafaDjoko2011}). Using the bounds we prove in \cref{lem:om:timeavg} below, we are able to quantify the smallness condition of M.A.H. Al-Jaboori and D. Wirosoetisno and express their result as
\begin{Prop}[\cite{MustafaDjoko2011}]\label{prop:onepoint} Suppose $\Gr\gtrsim1$. If 
    \begin{align}\label{cond:regime:MD}
        \veps \lesssim \Gr^{-{9}/{2}}(1+\log \Gr)^{-1/2},
    \end{align}
then 
    \[
        \dim_H \Acal^\veps=0,
    \]
and the global attractor consists of a single point.
\end{Prop}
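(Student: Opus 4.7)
The plan is to combine the contraction mechanism established by Al-Jaboori and Wirosoetisno \cite{MustafaDjoko2011} with the quantitative bound on the non-zonal vorticity provided by \cref{lem:om:timeavg}. First, I would recall the structure of the argument in \cite{MustafaDjoko2011}. Denote by $\om=\nabla^\perp\cdot u$ the vorticity and split $\om=\bar\om+\til\om$, where $\bar\om(x_2)$ is the zonal ($x_1$-)average of $\om$ and $\til\om$ its zero-mean complement. For two complete trajectories $u_1,u_2$ lying on $\Acal^\veps$, one studies the vorticity difference $\ze=\om_1-\om_2$ and performs an $L^2$ energy estimate. The key structural observation is that, after integration by parts, the only obstruction to contraction left by the planar 2D nonlinearity and the skew-symmetric Coriolis term takes the form $C\Sob{\til\om_1(t)}{L^2}\Sob{\nabla\ze(t)}{L^2}^2$, bounded via a Ladyzhenskaya-type estimate on the fluctuation. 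This leads to the differential inequality
\begin{align*}
\frac{1}{2}\frac{d}{dt}\Sob{\ze}{L^2}^2+\Sob{\nabla\ze}{L^2}^2\leq C\Sob{\til\om_1(t)}{L^2}\Sob{\nabla\ze}{L^2}^2,
\end{align*}
so that whenever $C\Sob{\til\om_1(t)}{L^2}\leq 1/2$ holds uniformly in $t$, Poincaré's inequality forces $\Sob{\ze(t)}{L^2}$ to decay exponentially. Applying this to trajectories extending backwards in time (as is permitted on the attractor) forces $u_1\equiv u_2$ throughout $\Acal^\veps$, and combined with the existence of a unique zonal equilibrium of \eqref{eq:NSE:rotating}, this identifies $\Acal^\veps$ with a single point.

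Next, I would quantify the smallness threshold. \cref{lem:om:timeavg} controls $\Sob{\til\om(t)}{L^2}$ uniformly in time along trajectories on $\Acal^\veps$ by a quantity of the schematic form $\Sob{\til\om(t)}{L^2}^2\lesssim \veps\,\Gr^{9/2}(1+\log\Gr)^{1/2}$, reflecting an enhanced-dissipation-type $\veps^{1/2}$ gain extracted from the dispersive oscillations induced by the Coriolis term. Imposing the contraction threshold $C\Sob{\til\om(t)}{L^2}\leq 1/2$ and solving for $\veps$ then reproduces precisely the condition \eqref{cond:regime:MD}.

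The substantive obstacle is concentrated entirely in \cref{lem:om:timeavg}, where one must exploit the dispersive structure of the Coriolis term $\veps^{-1}x_2 u^\perp$ to extract the $\veps^{1/2}$ gain beyond the deterministic enstrophy balance; the explicit power of $\Gr$ and the logarithmic correction arise from the standard uniform-in-time bounds on the enstrophy and palinstrophy of \eqref{eq:NSE:rotating} together with the loss incurred in the averaging analysis. Modulo this input, the present proposition is a direct numerical calibration of the contraction mechanism of \cite{MustafaDjoko2011}.
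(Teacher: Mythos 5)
Your proposal follows essentially the same route as the paper: the paper proves \cref{prop:onepoint} by citing the contraction mechanism of \cite{MustafaDjoko2011}, whose applicability is governed precisely by the smallness condition $C\Sob{\til{\om}}{L^2}\lesssim 1$, and then calibrating that condition via \cref{lem:om:timeavg}, which gives $\sup_t\Sob{\til{\om}(t)}{L^2}\lesssim \veps^{1/2}\Gr^{9/4}(1+\log\Gr)^{1/4}$ and hence \eqref{cond:regime:MD}. Your sketch of the underlying energy/contraction argument is somewhat more schematic than what actually occurs in \cite{MustafaDjoko2011} (the cross term $(B(\til{\ze},\bar{\om}_1),\til{\ze})$ carries $\partial_y\bar{\om}_1$ rather than $\til{\om}_1$ and requires the oscillatory integration-by-parts in time), but since both you and the paper defer that step to the cited reference, the argument is the same in substance.
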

This result had confirmed the dimensionality-reduction of the long-time dynamics of \eqref{eq:NSE:rotating} in the small Rossby number regime and indicated that the Hausdorff dimension of $\Acal^\varepsilon$ should in general depend on $\varepsilon$. However, the precise manner in which this degeneration would occur remained unclear.

\par

In this paper, we successfully obtain an upper-bound estimate on the Hausdorff dimension of $\Acal^\varepsilon$ that depends explicitly on $\veps$ in a wider regime of Rossby numbers that is consistent with the dynamically degenerate regime identified in \cref{prop:onepoint}, as well as the unconditional estimate \eqref{est:CFT} established in \cite{ConstantinFoiasTemam1988}. In particular, our main result can be stated as follows:

\begin{Thm}\label{thm:main}

Suppose $\Gr\gtrsim1$. If
    \begin{align}\label{cond:regime:1}
      \veps \lesssim \Gr^{-57/22}(1+\log\Gr)^{-9/22},
    \end{align}
then
    \begin{align}\label{est:regime:1}
        \dim_H \Acal^\veps\lesssim \varepsilon^{1/3} \Gr^{3/2}(1+\log\Gr)^{1/2},
    \end{align}
and if
    \begin{align}\label{cond:regime:2}
       \Gr^{-57/22}(1+\log\Gr)^{-9/22}\lesssim\veps\lesssim\Gr^{-5/2},
    \end{align}
then
    \begin{align}\label{est:regime:2}
        \dim_H \Acal^\veps\lesssim  \varepsilon^4 \Gr^{11}
        (1+\log \Gr)^{2}.
    \end{align}
\end{Thm}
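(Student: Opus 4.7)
The plan is to follow the Constantin--Foias--Temam (CFT) framework for bounding $\dim_H \Acal^\veps$ via volume-contraction estimates, and to extract the $\veps$-dependence from the time-averaged enstrophy bounds provided by \cref{lem:om:timeavg}. Linearizing \eqref{eq:NSE:rotating} around a trajectory $u(t) \in \Acal^\veps$, both the transport $(u\cdot\nabla) v$ (by incompressibility of $u$) and the Coriolis term $\tfrac{1}{\veps} x_2 v^\perp$ (by its skew-symmetry) drop out of any inner product with $v$, so neither contributes to $\Tr(P_N \mathcal{L}(u))$. By CFT, it suffices to exhibit $N$ for which
\begin{align*}
q_N := \limsup_{T \to \infty} \sup_{u_0 \in \Acal^\veps}\sup_{\{v_k\}_{k=1}^N} \frac{1}{T}\int_0^T \Tr\bigl(P_N(s)\, \mathcal{L}(u(s))\bigr)\, ds < 0.
\end{align*}

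The trace estimate is standard for the 2D NSE. Applying a 2D Lieb--Thirring inequality (with the logarithmic correction appropriate to the periodic torus) to the density $\rho = \sum_{k=1}^N |v_k|^2$, together with the Weyl-type lower bound $\sum_{k=1}^N \|\nabla v_k\|_{L^2}^2 \geq c N^2$, yields pointwise in $t$
\begin{align*}
\Tr\bigl(P_N \mathcal{L}(u(t))\bigr) \leq -c N^2 + C (1 + \log N)^{\alpha} \|\omega(t)\|_{L^2}^2,
\end{align*}
for a suitable exponent $\alpha$. Substituting into the condition $q_N<0$ reduces matters to an inequality of the form
\begin{align*}
N^2 \gtrsim (1 + \log N)^{\alpha} E(\veps), \quad E(\veps) := \sup_{u_0 \in \Acal^\veps} \limsup_{T \to \infty} \frac{1}{T}\int_0^T \|\omega(s)\|_{L^2}^2\, ds,
\end{align*}
to be solved self-consistently in $N$.

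The main new ingredient is then the refined, $\veps$-dependent enstrophy bound on $E(\veps)$, supplied by \cref{lem:om:timeavg}. Decomposing $\omega = \bar\omega + \til\omega$ into its zonal average (depending only on $x_2$) and the mean-zero non-zonal remainder, the rotation acts dispersively on $\til\omega$ while leaving $\bar\omega$ unchanged; thus \cref{lem:om:timeavg} is expected to control $\langle \|\bar\omega\|_{L^2}^2\rangle$ by a power of $\Gr$ only, and $\langle \|\til\omega\|_{L^2}^2\rangle$ by a product of powers of $\veps$ and $\Gr$ that vanishes as $\veps \to 0$. The two estimates \eqref{est:regime:1} and \eqref{est:regime:2} arise from solving the inequality above when the dominating contribution to $E(\veps)$ comes from different terms: in the range \eqref{cond:regime:1} the $\veps$-dependent non-zonal bound is the key contribution, propagating through to the factor $\veps^{1/3}$, whereas in \eqref{cond:regime:2} a coarser interpolation between the zonal and non-zonal contributions becomes sharper and produces the exponents $\veps^4\Gr^{11}$. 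As a sanity check, at the lower boundary of \eqref{cond:regime:1} the bound \eqref{est:regime:1} reduces to $\dim_H \Acal^\veps \lesssim 1$, consistent with \cref{prop:onepoint}, while at the upper boundary of \eqref{cond:regime:2} the bound \eqref{est:regime:2} reduces to $\Gr^{2/3}(1+\log\Gr)^{1/3}$, matching the unconditional CFT bound \eqref{est:CFT}.

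The hard part will be the enstrophy bound itself: the nonlinearity couples $\bar u$ and $\til u$, so quantifying the dispersive enhancement of dissipation for $\til\omega$ requires a careful splitting of the energy and enstrophy identities (separating zonal--zonal, zonal--nonzonal, and nonzonal--nonzonal interactions) together with a controlled use of the $\veps$-small parameter to absorb bad terms. A secondary but nontrivial task is closing the self-consistency loop in $N$ when $\kappa(N) \sim (1+\log N)^\alpha$ depends on $N$; this is what produces the fractional powers of $(1+\log\Gr)$ visible in \eqref{est:regime:1}--\eqref{est:regime:2}.
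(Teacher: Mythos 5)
Your overall CFT/volume-contraction framework is the right starting point, and you correctly observe that the transport and Coriolis terms vanish in the trace and that \cref{lem:om:timeavg} supplies an $O(\veps)$ bound on the non-zonal enstrophy. But there is a genuine gap at the heart of the argument: you propose to feed an $\veps$-dependent bound on $E(\veps)=\langle\|\omega\|_{L^2}^2\rangle$ into the standard trace inequality, yet, as you yourself note, the zonal contribution $\langle\|\bar\omega\|_{L^2}^2\rangle$ is controlled only by a power of $\Gr$ with no gain in $\veps$ (the uniform bounds give $\langle\|\nabla\omega\|_{L^2}^2\rangle^{1/2}\le\Gr$ independently of $\veps$, and the zonal mode is untouched by the rotation). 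Since $E(\veps)\ge\langle\|\bar\omega\|_{L^2}^2\rangle$, the self-consistent inequality $N^2\gtrsim(1+\log N)^{\alpha}E(\veps)$ can never yield anything better than the unconditional bound \eqref{est:CFT}; in particular it cannot produce $\veps^{1/3}\Gr^{3/2}$, which is far below $\Gr^{2/3}$ throughout the regime \eqref{cond:regime:1}. Your ``sanity check'' at the lower endpoint therefore does not actually follow from the scheme as described.

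The missing idea is the treatment of the zonal part of the trace. The paper splits the perturbation operator as $\bar{\BB}+\til{\BB}$ according to whether the background vorticity entering $B(\phi,\cdot)$ is zonal or non-zonal (\cref{lem:tr:split}); the non-zonal piece is handled essentially as you suggest, via the Constantin/Doering--Gibbon collective $L^\infty$ bound together with \cref{lem:om:timeavg}. For the zonal piece $\Tr[\bar{\BB}\PP_\tN]$, which is \emph{not} small in $\veps$ by any a priori estimate, one first passes to the unitary change of variables $\delta z=e^{tL_\veps}\delta x$ so that the Coriolis operator appears as explicit oscillatory phases $e^{-i(\lambda_j+\lambda_k)s/\veps}$, and then integrates by parts in time (differentiation by parts in the sense of Babin--Ilyin--Titi) to extract a factor of $\veps$. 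Because the projection $\PP_\tN(s)$ is time-dependent, this produces the term $\Tr[\PP_\tN\bar{\BB}_1\partial_s\PP_\tN]$, whose control requires the identity $\partial_s\PP_\tN\,\PP_\tN=-(\mathbb{I}-\PP_\tN)\AA\PP_\tN$ of \cref{lem:dsPN} and the further trace estimates of \cref{lem:tracebound234:final} and \cref{lem:tracebound5:final}. This is where the competing term $\veps\,\Gr^{11/4}(1+\log\Gr)^{1/2}\mathscr{N}^{7/4}$ and the smallness condition $\veps\Gr^{5/2}\lesssim1$ come from, and hence where the exponents in \eqref{est:regime:2} and the crossover defining the two regimes originate. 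Without this step your argument does not close.
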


{A few remarks are in order regarding the estimates \eqref{est:regime:1}, \eqref{est:regime:2}. Firstly, although \eqref{est:regime:1} is valid for all $\veps$ sufficiently small, as soon as $\veps\lesssim \Gr^{-{9}/{2}}(1+\log \Gr)^{-1/2}$, then one enters the dynamically degenerate regime of \cref{prop:onepoint}. In particular, when $\veps\lesssim \Gr^{-{9}/{2}}(1+\log \Gr)^{-1/2}$, then one has $\dim_H\Acal^\veps=0$. Nevertheless, we observe that $\Gr^{-9/2}<\Gr^{-57/22}$, whenever $\Gr\gtrsim1$. Note that the maximal estimate in \eqref{est:regime:1} is given by}
    \begin{align}\label{est:regime:1:max}
        \dim_H\Acal^\veps&\lesssim \Gr^{7/11}(1+\log\Gr)^{4/11}.
    \end{align}
{On the other hand, when $\veps\sim \Gr^{-{9}/{2}}(1+\log \Gr)^{-1/2}$, then \eqref{est:regime:1} becomes}
    \begin{align}\label{est:regime:1:min}
        \dim_H\Acal^\veps&\lesssim (1+\log\Gr)^{1/3}.
    \end{align}
Since $7/11<2/3$, the estimate \eqref{est:regime:1:max} yields a non-trivial improvement over \eqref{est:CFT} outside of the dynamically degenerate regime of \eqref{cond:regime:MD}. 

{On the other hand, although \eqref{est:regime:2} holds for all $\veps$ in \eqref{cond:regime:2}, when $\veps\gtrsim \Gr^{-31/12}(1+\log\Gr)^{-5/12}$, then the upper bound estimate \eqref{est:regime:2} actually exceeds the general upper bound asserted in \eqref{est:CFT}. 
Nevertheless, in the range $\Gr^{-57/22}(1+\log\Gr)^{-9/22}\lesssim\veps\lesssim\Gr^{-31/12}(1+\log\Gr)^{-5/12}$ the estimate \eqref{est:regime:2} constitutes a genuine improvement on $\dim_H\Acal^\veps$. Although this regime appears to be rather thin, it may suggest that further investigation is required to understand whether the regimes \eqref{est:regime:1}, \eqref{est:regime:2} meaningfully distinguishes between different dynamical behavior of the system.}

To obtain an estimate for the dimension of the attractor, we use the vorticity formulation \eqref{eq:NSE:rotatingv} below. Our framework is based on the work of C. R. Doering and J. D. Gibbon in \cite{DoeringGibbon1991}, where the authors proved the Constantin-Foias-Temam attractor dimension estimate \eqref{est:CFT} for 2D NSE through the vorticity formulation; {we refer the reader to the book \cite{Doering_Gibbon_1995_book}, where further discussion on this classical result can be found.} A direct replication of their approach establishes \eqref {est:CFT} for \eqref{eq:NSE:rotating} as well, but it remains to be clarified how to obtain an estimate depending on $\veps$. In order to do so, we employ a novel approach to analyze the time-variation of the trace, $ \Tr[\AA \PP_\tN]$, by describing time derivatives of the operator $\PP_\tN$, which denotes the projection operator onto the vector space spanned by evolving volume elements. This critical step allows us to successfully apply differentiation-by-parts developed by \cite{BabinIlyinTiti2011} and independently exploited in \cite{MustafaDjoko2011}, thus resulting in an expression explicitly dependent on $\veps$. 

Regarding the assumption $\Gr\gtrsim1$, we point out that in the context of turbulent flow, this is a very natural assumption. Indeed, it is known that if $\Gr$ is sufficiently small, then the global attractor of the non-rotating 2D NSE consists of a single point \cite{FoiasJollyManleyRosa2002}. Due to the skew-symmetric nature of the rotation, the same argument holds for \eqref{eq:NSE:rotating}. In light of \cref{prop:onepoint}, the assumption $\Gr\gtrsim1$ represents a minimal assumption.

\cref{thm:main} nevertheless leaves a few interesting issues that remain to be addressed. One is whether the improved estimates there are sharp. Another is to understand the manner in which the solutions dynamically approach the zero-Rossby number regime. Our second group of results addresses the latter issue, while the former is reserved for future investigation.

\begin{Thm}\label{thm:decomposition}
Let $f=\Gr F$. Given $\veps>0$, let $u^\veps$ denote the unique solution of \eqref{eq:NSE:rotating} corresponding to initial data $u_0$ and let $\om^\veps=\nabla^\perp\cdotp u^\veps$ denote the corresponding vorticity field {(see \eqref{def:vorticity})}. {We write}
    \begin{align}\label{eq:asymptotic:decomposition}
        \om^\veps(t,x,y)=\overline{\om}(t,y)+\overline{\ze}^\veps(t,y)+\widetilde{\om}^\veps(t,x,y),
    \end{align}
where $\widetilde{\om}^\veps=\om^\veps-\int_0^{2\pi}\om^\veps(t,x,y)dx$ denotes the non-zonal vorticity, $\overline{\om}$ denotes the unique solution of the one-dimensional equation,
    \begin{align}\label{eq:heat}
        \bdy_t\overline{\om}-\bdy_y^2\overline{\om}=\overline{f},\quad \overline{\om}(0,y)=\nabla^\perp\cdotp \overline{u}_0,
    \end{align}
where $\overline{f}=\int_0^{2\pi}f(x,y)dx$, denotes the zonal component of $f$. {Then} 
    \begin{align}\label{eq:zonal:error}
        \bdy_t\overline{\ze}^\veps-\bdy_y^2\overline{\ze}^\veps=-\overline{B}(\om^\veps,\om^\veps),\quad \overline{\ze}^\veps(0,y)=\overline{\om}^\veps_0(y)-\overline{\om}_0(y)
    \end{align}
and
    \begin{align}
        \limsup_{t\goesto\infty}\|\overline{\ze}^\veps(t)\|_{L^2}\leq O(\veps).
    \end{align}
In particular, when $\veps$ satisfies \eqref{cond:regime:MD}, then
    \begin{align}
        \limsup_{t\goesto\infty}\|\om^\veps(t)-\overline{\om}_*-\widetilde{\om}^\veps_*\|_{L^2}\leq O(\veps),
    \end{align}
where $\overline{\om}_*$ denotes the unique stationary solution of \eqref{eq:heat}, and $\widetilde{\om}^\veps_*$ denotes the unique stationary solution of \eqref{eq:NSE:rotating}.
\end{Thm}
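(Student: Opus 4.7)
The plan is to derive \eqref{eq:asymptotic:decomposition} directly from the vorticity formulation of \eqref{eq:NSE:rotating}, and to extract an explicit $O(\veps)$ factor in the evolution of the zonal error through a differentiation-by-parts identity against the Coriolis term. First I would take the curl of \eqref{eq:NSE:rotating}: writing $u^\veps=(u^{\veps,1},u^{\veps,2})$, incompressibility gives $\nabla^\perp\cdotp(x_2(u^\veps)^\perp)=u^{\veps,2}$, so the vorticity equation reads
$$\bdy_t\om^\veps+u^\veps\cdotp\nabla\om^\veps=\De\om^\veps-\veps^{-1}u^{\veps,2}+\nabla^\perp\cdotp f.$$
Next I would decompose $\om^\veps=\bar{\om}^\veps+\til{\om}^\veps$ into its zonal mean and zero-zonal-mean complement. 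Since $\overline{u^{\veps,2}}\equiv0$ (from incompressibility together with $x_1$-periodicity), the zonal average of the vorticity equation reduces to $\bdy_t\bar{\om}^\veps-\bdy_y^2\bar{\om}^\veps=-\bar{B}(\om^\veps,\om^\veps)+\bar{f}$, and subtracting \eqref{eq:heat} immediately yields \eqref{eq:zonal:error} for $\bar{\ze}^\veps:=\bar{\om}^\veps-\bar{\om}$.

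The main work lies in producing the $O(\veps)$ bound on $\|\bar{\ze}^\veps\|_{L^2}$. I would first rewrite the transport term in divergence form, obtaining $\bar{B}(\om^\veps,\om^\veps)=\bdy_y\,\overline{u^{\veps,2}\,\til{\om}^\veps}$, then apply the non-zonal projection to the vorticity equation to solve for the Coriolis velocity,
$$u^{\veps,2}=-\veps\bigl(\bdy_t\til{\om}^\veps+(u^\veps\cdotp\nabla\om^\veps)^\sim-\De\til{\om}^\veps-\til{f}\bigr).$$
Substituting this identity into $\bar{B}$ extracts a factor of $\veps$ in front of a time derivative $\tfrac{1}{2}\bdy_t\overline{(\til{\om}^\veps)^2}$ and lower-order nonlinear, dissipative and forcing contributions.

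The problematic time derivative would then be absorbed by introducing the corrector $\bar{\eta}^\veps:=\bar{\ze}^\veps-\tfrac{\veps}{2}\,\bdy_y\,\overline{(\til{\om}^\veps)^2}$, which satisfies a perturbed heat equation of schematic form $\bdy_t\bar{\eta}^\veps-\bdy_y^2\bar{\eta}^\veps=\veps\,\mathcal{R}^\veps$, where $\mathcal{R}^\veps$ is assembled from $y$-derivatives of the residual terms produced by the substitution, together with the $\bdy_y^3\overline{(\til{\om}^\veps)^2}$ term generated by the Laplacian of the corrector. A standard $L^2$ energy estimate on the zonal-mean subspace, using Poincar\'e's inequality since $\bar{\eta}^\veps$ inherits vanishing $y$-mean, combined with the uniform-in-$\veps$ absorbing-ball bounds on $\|\om^\veps\|_{H^2}$ provided by \cref{lem:om:timeavg} and the skew-symmetric a priori estimates for \eqref{eq:NSE:rotating}, then yields $\limsup_{t\to\infty}\|\bar{\eta}^\veps(t)\|_{L^2}\lesssim\veps$. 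Since $\|\bar{\eta}^\veps-\bar{\ze}^\veps\|_{L^2}\lesssim\veps\|\til{\om}^\veps\|_{L^4}^2\lesssim\veps$ on the absorbing ball, the same bound transfers to $\bar{\ze}^\veps$.

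For the final claim, I would invoke \cref{prop:onepoint}: under \eqref{cond:regime:MD}, $\Acal^\veps=\{\om^\veps_*\}$ and hence $\om^\veps(t)\to\om^\veps_*$ in $L^2$; then applying the stationary version of the preceding estimate to the zonal-mean error $\bar{\om}^\veps_*-\bar{\om}_*$ yields $\|\om^\veps_*-\bar{\om}_*-\til{\om}^\veps_*\|_{L^2}\lesssim\veps$. The principal obstacle I anticipate is the $\veps$-uniform control of $\til{\om}^\veps$ in $H^2$ needed to close the energy estimate on $\bar{\eta}^\veps$: naive bounds degenerate as $\veps\to0$ due to the $\veps^{-1}$ in its evolution equation, and circumventing this requires exploiting that the Coriolis contribution vanishes identically in the basic skew-symmetric energy identities for \eqref{eq:NSE:rotating}, thereby yielding absorbing-ball radii independent of $\veps$.
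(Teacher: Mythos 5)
Your proposal is correct in outline, but for the key $O(\veps)$ bound it takes a genuinely different route from the paper. The derivation of \eqref{eq:zonal:error} is identical. For the estimate on $\bar{\ze}^\veps$, the paper (in \cref{lem:limiting:system}) simply tests \eqref{eq:zonal:error} against $\bar{\ze}^\veps$, observes that the source reduces to $-(B(\til{\om}^\veps,\bar{\ze}^\veps),\til{\om}^\veps)$, which is quadratic in $\til{\om}^\veps$, and then invokes the Al-Jaboori--Wirosoetisno bound $\sup_{t\geq T_1}\Sob{\til{\om}^\veps(t)}{H^1}=O(\sqrt{\veps})$ (quoted as \eqref{est:nonzonal:bounds}); Gr\"onwall then gives $\limsup_t\Sob{\bar{\ze}^\veps(t)}{L^2}^2\leq C\sup_t\Sob{\til{\om}^\veps(t)}{H^1}^4=O(\veps^2)$. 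You instead extract the factor of $\veps$ from scratch, by solving the non-zonal vorticity equation for $u^{\veps,2}$, substituting into $\bar{B}(\om^\veps,\om^\veps)=\bdy_y\overline{u^{\veps,2}\til{\om}^\veps}$, and absorbing the resulting time derivative into the corrector $\bar{\ze}^\veps-\tfrac{\veps}{2}\bdy_y\overline{(\til{\om}^\veps)^2}$. This is a valid and self-contained alternative --- it is the physical-space counterpart of the Fourier-side differentiation-by-parts the paper itself uses in \cref{lem:Bbar:decomposition} and in the proof of \cref{lem:om:timeavg} --- and it avoids citing \eqref{est:nonzonal:bounds} altogether. The trade-off is regularity: the paper's route needs only $H^1$ smallness of $\til{\om}^\veps$, whereas yours must make sense of $\bdy_y^3\overline{(\til{\om}^\veps)^2}$ after one integration by parts and therefore needs $\veps$-uniform $H^2$ control of $\om^\veps$; this is indeed available, but from \cref{thm:djoko:bounds} and \eqref{est:om:sobolev:Gr} (the skew-symmetry argument you describe), not from \cref{lem:om:timeavg}, which only controls $\Sob{\til{\om}^\veps}{L^2}$ and the time average of $\Sob{\nabla\til{\om}^\veps}{L^2}^2$. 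Two further harmless slips: $\Sob{\bar{\eta}^\veps-\bar{\ze}^\veps}{L^2}=\tfrac{\veps}{2}\Sob{\bdy_y\overline{(\til{\om}^\veps)^2}}{L^2}$ involves a derivative of $\til{\om}^\veps$, so it is controlled by $\veps\Sob{\til{\om}^\veps}{L^\infty}\Sob{\til{\om}^\veps}{H^1}$ rather than $\veps\Sob{\til{\om}^\veps}{L^4}^2$; and the final claim under \eqref{cond:regime:MD} is handled exactly as in the paper, via \cref{prop:onepoint} together with convergence of the heat flow to its unique steady state.
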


Lastly, we observe that a direct consequence of \cref{prop:onepoint} and our result \cref{thm:decomposition} is a continuity result between the global attractors of \eqref{eq:NSE:rotating} and \eqref{eq:heat}.

\begin{Thm}\label{thm:continuity}
Let $\Acal^\veps$ denote the global attractor of \eqref{eq:NSE:rotating} and $\overline{\Acal}$ denote the global attractor of \eqref{eq:heat}.
Then
    \begin{align}\notag
        \lim_{\veps\goesto0^+}\dist(
        \Acal^\veps,\overline{\Acal})=0,
    \end{align}
where $\dist$ denotes the Hausdorff set distance induced by the $H^1$-norm. 
\end{Thm}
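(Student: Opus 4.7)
The plan is to combine \cref{prop:onepoint} with \cref{thm:decomposition} in order to reduce the continuity statement to a two-point comparison in the small-Rossby regime, and then to promote the $L^2$-asymptotic bound furnished by \cref{thm:decomposition} to an $H^1$-bound by interpolating against uniform-in-$\veps$ higher-order bounds on the attractor.

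First, the global attractor $\bar{\Acal}$ of the linear forced heat equation \eqref{eq:heat} is the singleton $\{\bar{\om}_*\}$ consisting of its unique stationary solution, regarded as a zonal (i.e., $x_1$-independent) element of the two-dimensional ambient space. By \cref{prop:onepoint}, there is a threshold $\veps_0>0$ such that for every $\veps\in(0,\veps_0)$ the global attractor $\Acal_\veps$ of \eqref{eq:NSE:rotating} likewise collapses to a singleton $\{\om^\veps_*\}$ built from the unique stationary vorticity. Consequently,
\[
\dist(\Acal_\veps,\bar{\Acal}) = \|\om^\veps_*-\bar{\om}_*\|_{H^1}
\]
for every such $\veps$, and it is enough to drive this scalar quantity to zero.

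Second, I would evaluate \cref{thm:decomposition} on the invariant orbit $\om^\veps(t)\equiv \om^\veps_*$. Since this orbit is constant in time, the $\limsup$ collapses to the pointwise bound
\[
\|\om^\veps_*-\bar{\om}_*-\til{\om}^\veps_*\|_{L^2} \leq O(\veps),
\]
where $\til{\om}^\veps_*$ is the non-zonal component of the stationary solution. It remains to control $\til{\om}^\veps_*$ itself. To do this, I would project the stationary vorticity equation onto its non-zonal part, test against $\til{\om}^\veps_*$, and apply the Babin-Ilyin-Titi differentiation-by-parts in $x_1$ to the nonlinearity -- precisely the mechanism that underlies \cref{prop:onepoint}. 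Because the skew-symmetric rotation term $\veps^{-1}x_2 u^\perp$ acts non-degenerately on non-zonal Fourier modes, this procedure produces an extra factor of $\veps$ on that sector and forces $\|\til{\om}^\veps_*\|_{L^2}=O(\veps^\alpha)$ for some $\alpha>0$. Triangle inequality then yields $\|\om^\veps_*-\bar{\om}_*\|_{L^2}\to 0$ as $\veps\to 0^+$.

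Finally, to upgrade from $L^2$- to $H^1$-convergence, I would invoke the uniform-in-$\veps$ enstrophy and palinstrophy bounds available on $\Acal_\veps$: because the Coriolis term is skew-symmetric these are inherited verbatim from the non-rotating case (and are already exploited in the paper, e.g.\ in \cref{lem:om:timeavg}), giving a uniform $H^2$-bound on $\om^\veps_*$; the target $\bar{\om}_*$ is a fixed smooth profile in $H^2$. The interpolation $\|v\|_{H^1}\lesssim \|v\|_{L^2}^{1/2}\|v\|_{H^2}^{1/2}$ then promotes the $L^2$-convergence just established to the required $H^1$-convergence. The principal obstacle is producing a quantitative decay rate for $\|\til{\om}^\veps_*\|_{L^2}$: \cref{thm:decomposition} itself only controls the zonal error $\bar{\ze}^\veps$, so a separate appeal to the rotation-suppression mechanism underlying \cref{prop:onepoint} is needed to handle the non-zonal stationary piece.
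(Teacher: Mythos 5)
Your argument is correct in substance but takes a genuinely different route from the paper's. The paper proves this statement via \cref{lem:attractors:cty}: it subtracts the two \emph{stationary} equations, tests the equation for $\ze^\veps=\om^\veps_*-\bar{\om}_*$ against $\ze^\veps$ itself, and observes that after the cancellations \eqref{E:Bdiv}, \eqref{eq:skew}, and \eqref{eq:zonal:cancellation} the only surviving term is $\lb B(\til{\om}^\veps_*,\bar{\om}_*),\til{\om}^\veps_*\rb$, which is small by the non-zonal bounds; this yields $\nu\|\nabla\ze^\veps\|_{L^2}^2\to0$ directly, i.e., $H^1$-convergence in one stroke and with an $O(\sqrt{\veps})$-type rate. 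Your route instead passes through the $L^2$ decomposition of \cref{thm:decomposition} and upgrades to $H^1$ by interpolating against the $\veps$-uniform bounds \eqref{est:om:sobolev:Gr}; this is more modular but costs a power in the rate (interpolation gives only $O(\veps^{1/4})$ in $H^1$). Two caveats. First, there is no circularity in invoking \cref{thm:decomposition} here even though its proof in the paper cites \cref{lem:attractors:cty} for the $H^1$ upgrade, because you only use the $L^2$ statement, which rests on \cref{lem:limiting:system} alone; but you should say so explicitly. Second, your sketched mechanism for bounding $\til{\om}^\veps_*$ is not quite right: testing the non-zonal stationary equation against $\til{\om}^\veps_*$ annihilates the rotation term by skew-symmetry and produces no factor of $\veps$, so no smallness can be extracted that way. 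The smallness of the non-zonal stationary part comes from the time-oscillation argument (differentiation by parts in $s$, not in $x_1$) applied to the constant-in-time trajectory on the attractor; this is precisely \cref{lem:om:timeavg}, or \eqref{est:nonzonal:bounds} quoted from \cite[Theorem 3.1]{MustafaDjoko2011}, giving $\|\til{\om}^\veps_*\|_{L^2}=O(\sqrt{\veps})$. Citing that bound closes the step; attempting to re-derive it by the stationary energy estimate you describe would fail.
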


The proof of \cref{thm:continuity} is provided in \cref{sect:asymptotic}. Although \cref{thm:continuity} is an expected result in our situation due to the collapse of the global attractor of $\eqref{eq:NSE:rotating}_\veps$ to a single point for all sufficiently small values of $\veps$, it is \textit{not} a statement about the continuity of global attractors between $\eqref{eq:NSE:rotating}_\veps$ and the global attractor, $\Acal^0$, of its \textit{limiting equation} $\eqref{eq:NSE:rotating}_0$, obtained as $\veps\goesto0$. In general, this is a delicate issue and this interesting issue is not treated here. We refer the reader to \cite{KostiankoTitiZelik2018} for an interesting case study in this direction. While upper semicontinuity of global attractors can typically be shown under mild assumptions, lower semicontinuity can often only be guaranteed under more restrictive assumptions \cite{HoangOlsonRobinson2015}. Nevertheless, \cref{thm:continuity} indicates that the failure of continuity between the global attractor, $\Acal^\veps$, of $\eqref{eq:NSE:rotating}_\veps$ and, $\Acal^0$, of $\eqref{eq:NSE:rotating}_0$ could be due to the non-degeneracy of $\Acal^0$.
The main value of \cref{thm:decomposition} and \cref{thm:continuity} is that it demonstrates that the long-time behavior of \eqref{eq:NSE:rotating} in the limit of large rotation is characterized entirely by the long-time behavior of a one-dimensional heat equation. Indeed, \cite[Theorem 3.1]{MustafaDjoko2011} shows that $\sup_{t\geq T_m}\Sob{\widetilde{\om}^\veps(t)}{H^m}=O(\sqrt{\veps})$. 



In the following section (\cref{sec:prelim}), we develop some mathematical preliminaries in order to prove our main theorems above. \cref{sec:prelim} is dedicated to developing the proof of \cref{thm:main}, while \cref{sect:asymptotic} is dedicated to proving \cref{thm:decomposition} and \cref{thm:continuity}. Various technical results needed for these tasks are relegated to \cref{sect:app:A} and \cref{sect:app:B}. In particular, time-averages bounds on the gradient of the non-zonal vorticity are proved in \cref{sect:app:A}, which may be useful for other purposes.

\section{Mathematical Preliminaries}\label{sec:prelim}

For mathematical convenience, we will impose the following symmetry conditions on $u$:
    \begin{align}\label{eq:symmetry}
        u^1(t,x_1,-x_2)=u^1(t,x_1,x_2),\quad u^2(t,x_1,-x_2)=-u^2(t,x_1,x_2).
    \end{align}
It is easily verified that if one specifies an initial condition, $u_0$, and imposes the symmetry \eqref{eq:symmetry} on $u_0$ and $F$, then the corresponding solution, $u(t;u_0)$, will also satisfy \eqref{eq:symmetry}, for all $t\ge 0$. We shall henceforth impose \eqref{eq:symmetry}. Owing to the Galilean invariance of \eqref{eq:NSE:rotating:d} and conservation of the spatial mean, we may further assume that
    \[
        \int_{\Omega}u\,dx=0,\quad \text{and}\quad \int_{\Omega} F\,dx=0.
    \]

\subsection{Vorticity formulation}
Recall that the vorticity $\omega$ is a scalar quantity in 2D, given by
    \begin{align}\label{def:vorticity}
        \omega=\nabla^{\perp}\cdot u=\frac{\partial u^2}{\partial x_1}-\frac{\partial u^1}{\partial x_2}.
    \end{align}
To write the evolution equation for $\omega$, we apply $\nabla^{\perp}\cdot$ to \eqref{eq:NSE:rotating} and obtain 
    \begin{align}\label{eq:NSE:rotatingv}
        \frac{\partial \omega}{\partial t}+u\cdot \nabla \omega=f+ \Delta \omega-\frac{1}{\varepsilon}u^{2},
    \end{align}
where 
    \begin{align}\label{def:f}
        f=\Gr (\nabla^{\perp}\cdot F)
    \end{align}
is \textit{time-independent}.

We assume that $\omega$, $f$ are real-valued and mean-free over $\Omega$.
Consider the bilinear form associated with the nonlinear term in \eqref{eq:NSE:rotatingv}, given by 
    \begin{align}\label{def:B}
        B(\tht_1,\tht_2)=-\nabla^{\perp}(-\Delta)^{-1}\tht_1\cdot \nabla \tht_2.
    \end{align}
Applying integration by parts and using the boundary conditions, it follows that
    \begin{align}\label{E:Bdiv}
        ( B(\tht_1, \tht_2), \tht_2 )=0,
    \end{align}
where $(\,, \,)$ denotes the $L^2-$inner product.
We also denote the operator corresponding to the Coriolis force in \eqref{eq:NSE:rotatingv} by
    \begin{align}\label{def:L}
        L_\veps\tht=-\frac{1}{\varepsilon}\partial_1 (-\Delta)^{-1}\tht.
    \end{align}
Note that since $L$ is skew-adjoint, it follows that
    \begin{align}\label{eq:skew}
        ( L_\veps\tht,\tht)=0.
    \end{align}
We rewrite \eqref{eq:NSE:rotatingv} in the usual functional form as
\begin{align}\label{eq:NSE:rotatingvf}
    \frac{\partial \omega}{\partial t}+B(\omega, \omega)=f+\Delta \omega-L_\veps\omega.
\end{align}

We will make use of a particular orthogonal decomposition of a general scalar field $\tht$ into a purely height-dependent component and its complementary component. Define
\begin{align*}
    \overline{\tht}(x_1,x_2)=\sum_{k_1=0}\hat{\tht}_k e^{ik_2 x_2},\quad
    \widetilde{\tht}(x_1,x_2)=\sum_{k_1 \ne 0}\hat{\tht}_k e^{ik\cdot x},
\end{align*}
where the sum runs over $k \in \mathbb{Z}^2\backslash \{0\}$. We will refer to $\overline{\tht}$ as the {\em zonal} mode and $\widetilde{\tht}$ as the {\em non-zonal} mode. By a simple calculation, it follows that for any scalar fields $\tht_1, \tht_2$, we have
    \begin{align}\label{eq:zonal:cancellation}
        B(\overline{\tht}_1, \overline{\tht}_2)=0.
    \end{align}

For our analysis, it will be useful to rewrite \eqref{eq:NSE:rotatingv} in an equivalent form, where the Coriolis force operator \eqref{def:L} is absorbed into the equation through an integrating factor. This is achieved by applying the change of variables 
    \begin{align}\label{def:eta}
        \omega \mapsto e^{tL_\veps}\omega=:\eta.
    \end{align}
We then obtain the following \textit{non-autonomous} evolution equation for $\eta$:
\begin{align}\label{eq:NSE:rotatingvf:eta}
    \frac{\partial \eta}{\partial t}+\BL(\eta, \eta)=g_\veps(t)+ \Delta \eta,
\end{align}
where $g_\veps=e^{tL_\veps}f$ and
    \begin{align}\label{def:Bveps}
B_\veps(t)(\tht_1, \tht_2)=e^{tL_\veps}B(e^{-tL_\veps}\tht_1, e^{-tL_\veps}\tht_2).
    \end{align}
Observe that the operator $e^{tL_\veps}$ is unitary and satisfies:
    \begin{align}\label{E:unitary:skew}
        \|e^{tL_\veps}\|_{L^2}=1,\qquad (e^{tL_\veps}\tht_1, \tht_2 )=\sum_{k}e^{-i\frac{t}{\varepsilon}\lambda_k}\hat{\tht}_1\overline{\hat{\tht}}_2=( \tht_1, e^{-tL_\veps}\tht_2),
    \end{align}
for all $t\geq0$, where 
    \begin{align}\label{def:lambda}
        \lambda_k =\frac{k_1}{|k|^2}.
    \end{align}

As we remarked earlier, uniform long-time estimates for $\omega$ satisfying rotating Navier-Stokes equations \eqref{eq:NSE:rotatingv} were obtained in \cite{MustafaDjoko2011}. We recall the result from \cite{MustafaDjoko2011} which asserts absorbing ball-type bounds for the vorticity $\om$ in all higher-order Sobolev norms. To state the result, we revert back to physical dimensions and make use of the following notation: For $f=f(x,t)$, we denote by
    \begin{align}\label{def:bracket:norm}
        \llb f \rrb:=\sup_{t>0}\nrm{f(t)}_{L^2}.
    \end{align}

We recall that 2D NSE \eqref{eq:NSE:rotatingvf} is globally well-posed in $V$, has a well-defined semigroup over $V$ for each $f\in H$, and possesses a finite-dimensional global attractor contained in $V$ (see, for instance, \cite{ConstantinFoias1988, Robinson2003, Temam_1997, FoiasManleyRosaTemam2001}). The solutions of \eqref{eq:NSE:rotatingvf} also satisfy the following higher-order apriori bounds, which were proved in \cite{MustafaDjoko2011, Djoko2019}.

\begin{Thm}[\cite{MustafaDjoko2011}]\label{thm:djoko:bounds}
Let the initial data $u_0\in L^2(\Omega)$ be given and the body force $f\in H^{m-1}(\Omega)$, for some $m\geq0$. Then there exists a positive time $T_m=T_m(\nrm{u_0}_{L^2},\llb \nabla^{m-1} f \rrb)$ such that 
    \begin{align}\label{est:om:sobolev}
       &\sup_{t\geq T_m}\nrm{{\omega}(t)}_{H^m}^2+\sup_{t\geq T_m} \left ( e^{-\nu t'}\int_t^{t+t'}e^{\nu s}\Sob{\omega}{H^{m+1}}^2\,ds \right)\notag
       \\
       &\le C_m \frac{L^{2m}\llb \nabla ^{m-1}f \rrb^2}{\nu^2}\left(1+\frac{L^2\llb \nabla ^{-1}f \rrb^2}{\nu^4}\right)^{m},
    \end{align}
for some absolute constant $C_m$, independent of $u_0$.  
\end{Thm}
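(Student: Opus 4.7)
The plan is to establish the bound by induction on $m$, deriving an a priori differential inequality for $\|\nabla^m\omega\|_{L^2}^2$, and then extracting an absorbing ball via Poincar\'e's inequality together with the uniform Gronwall lemma. The overarching point is that the Coriolis operator $L_\veps$ defined in \eqref{def:L} is a Fourier multiplier that commutes with every power of $\De$; combined with its skew-adjointness \eqref{eq:skew}, this forces $(L_\veps\om, (-\De)^m\om)=0$, so the rotation term contributes nothing to any of the energy-type estimates. Consequently the apriori bounds are structurally identical to those for the non-rotating 2D NSE, and the desired estimate can be obtained uniformly in $\veps$.

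For the base case $m=0$, I would test \eqref{eq:NSE:rotatingv} against $\om$: the nonlinearity vanishes by \eqref{E:Bdiv}, the rotation term vanishes by \eqref{eq:skew}, and the forcing is handled by duality via $(f,\om)\leq \|\nabla^{-1}f\|_{L^2}\|\nabla\om\|_{L^2}\leq\tfrac{1}{\nu}\|\nabla^{-1}f\|_{L^2}^2+\tfrac{\nu}{4}\|\nabla\om\|_{L^2}^2$. Combined with the Poincar\'e inequality, this yields $\tfrac{d}{dt}\|\om\|_{L^2}^2+c\tfrac{\nu}{L^2}\|\om\|_{L^2}^2\lesssim \tfrac{1}{\nu}\|\nabla^{-1}f\|_{L^2}^2$, and classical Gronwall gives both the pointwise absorbing bound and, after integration in time, the time-averaged enstrophy bound encoded in the second term on the left-hand side of \eqref{est:om:sobolev}. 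This is the $m=0$ instance.

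For the inductive step, I would test \eqref{eq:NSE:rotatingv} against $(-\De)^m\om$. Again the Coriolis term gives zero, while for the nonlinearity $(B(\om,\om),(-\De)^m\om)$ one pairs up derivatives using the Kato-Ponce commutator / Leibniz estimates, so that the worst term is bounded by $\|u\|_{L^\infty}\|\nabla^{m+1}\om\|_{L^2}\|\nabla^m\om\|_{L^2}$ plus terms involving lower-order norms that are under control by the previous induction step. The main obstacle, and the source of the factor $(1+L^2\llb\nabla^{-1}f\rrb^2/\nu^4)^m$, is the $L^\infty$ bound on $u$: the correct tool in two dimensions is a Brezis-Gallouet (or Brezis-Wainger) logarithmic inequality, $\|u\|_{L^\infty}\lesssim\|u\|_{H^1}(1+\log(\|u\|_{H^{m+1}}/\|u\|_{H^1}))^{1/2}$. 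Absorbing the top order contribution into $\nu\|\nabla^{m+1}\om\|_{L^2}^2$ via Young's inequality yields a differential inequality of the form $\tfrac{d}{dt}\|\nabla^m\om\|_{L^2}^2+\tfrac{\nu}{L^2}\|\nabla^m\om\|_{L^2}^2\leq C_m(\text{lower-order})+\tfrac{1}{\nu}\|\nabla^{m-1}f\|_{L^2}^2$, whose right-hand side is uniformly bounded for $t\geq T_{m-1}$ by the inductive hypothesis.

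Finally, applying Gronwall to this inequality produces an absorbing-ball bound for $\|\nabla^m\om\|_{L^2}^2$ on some time interval $[T_m,\infty)$ with $T_m$ depending on $\|u_0\|_{L^2}$ and $\llb \nabla^{m-1}f\rrb$. Reintegrating the differential inequality from $t$ to $t+t'$ and using the pointwise absorbing bound on $\|\nabla^m\om\|_{L^2}^2$ controls the weighted time integral $e^{-\nu t'}\int_t^{t+t'}e^{\nu s}\|\om\|_{H^{m+1}}^2\,ds$ from \eqref{est:om:sobolev}. Tracking constants through the induction produces the stated multiplicative factor $(1+L^2\llb\nabla^{-1}f\rrb^2/\nu^4)^m$, which reflects the fact that each additional derivative brings a new invocation of the Brezis-Gallouet estimate controlled by the enstrophy bound from the base case. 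The principal technical difficulty is simply the book-keeping of constants across the induction so as to arrive at the precise exponent $m$ appearing in \eqref{est:om:sobolev}.
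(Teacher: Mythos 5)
First, note that the paper does not actually prove \cref{thm:djoko:bounds}; the estimate \eqref{est:om:sobolev} is imported verbatim from \cite{MustafaDjoko2011} (see also \cite{Djoko2019}), so there is no internal proof to compare yours against. Your outline follows the same route as that cited source: induction on $m$ via $H^m$ energy estimates in the vorticity formulation, with the Coriolis contribution vanishing identically at every level because $L_\veps$ is a skew-adjoint Fourier multiplier that commutes with $(-\De)^m$ (cf.\ \eqref{eq:skew}), so the bounds are uniform in $\veps$ and structurally identical to the non-rotating case --- precisely the observation the paper itself makes in the introduction. Your base case is correct and already yields the $m=0$ instance of \eqref{est:om:sobolev}.

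The one substantive soft spot is the closure of the inductive step with a constant that is only \emph{polynomial} in $\Gr$. After Young's inequality, the term $\Sob{u}{L^\infty}\Sob{\nabla^{m+1}\om}{L^2}\Sob{\nabla^m\om}{L^2}$ leaves behind $C\nu^{-1}\Sob{u}{L^\infty}^2\Sob{\nabla^m\om}{L^2}^2$, which is \emph{not} ``lower-order'': it is the very quantity being estimated, multiplied by a large time-dependent coefficient. Feeding this into the (uniform) Gr\"onwall lemma produces a factor $\exp\big(C\nu^{-1}\int\Sob{u}{L^\infty}^2\big)$, i.e.\ exponential in the Grashof number, which is incompatible with the stated bound. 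To reach \eqref{est:om:sobolev} one must first interpolate $\Sob{\nabla^m\om}{L^2}^2\le\Sob{\nabla^{m-1}\om}{L^2}\Sob{\nabla^{m+1}\om}{L^2}$ and absorb the top-order factor into the dissipation, so that what survives on the right-hand side involves only the level-$(m-1)$ norm already controlled by the inductive hypothesis; the Gr\"onwall coefficient is then the constant $\nu/L^2$ coming from Poincar\'e, and each rung of the ladder multiplies the bound by a single factor of $1+L^2\llb\nabla^{-1}f\rrb^2/\nu^4$. That absorption step is the actual content of the cited proof, not mere book-keeping of constants.
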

\begin{Rmk}
For the rest of the paper, $C$ will denote a positive non-dimensional constant and is independent of $\tN, \Gr, \varepsilon$ and $\om_0$, where $\tN$ will represent the dimension of the volume elements to be propagated along the linearized evolution and shown to contract to $0$ (see \cref{sect:attractor} below).
\end{Rmk}

\section{Estimation of the Dimension of the Global Attractor}\label{sect:attractor}
Since we are concerned with the evolution of $\om$ on the global attractor, we have \eqref{est:om:sobolev} holds for all time. Then \eqref{est:om:sobolev} along with \eqref{def:Gr} implies
    \begin{align}\label{est:om:sobolev:Gr}
        \sup_{t\in\mathbb{R}}\Sob{\om(t)}{H^m}\le C (\Gr\vee1)^{m+1}.
    \end{align}
Note that upon letting $\Gr'=\Gr\vee1$, we can get rid of the maximum. It will thus be convenient to simply assume $\Gr\geq1$.

To obtain an upper bound for $\text{dim}_H \Acal^{\varepsilon}$, we follow the standard approach which involves looking at the evolution of arbitrary infinitesimal volume elements under the linearized equations on the attractor. Our goal is to estimate the largest possible N such that all N-dimensional volumes in $L^2(\Omega)$ contract to $0$ asymptotically as $t \to \infty$. 
To this end, we linearize \eqref{eq:NSE:rotatingvf} about a solution on the global attractor, that is, $\om(\cdot;\om_0)$, where $\om_0 \in\Acal^{\varepsilon}$, and obtain
    \begin{align}\label{eq:NSE:rotatingvf:lin:x}
        \frac{\partial \delta x}{\partial t}+B(\omega, \delta x)+B(\delta x, \omega)= \Delta \delta x-L_\veps\delta x.
    \end{align}
Upon formally applying the change of variable $\delta z=e^{tL_\veps}\delta x$, we may rewrite \eqref{eq:NSE:rotatingvf:lin:x} as 
    \begin{align}\label{eq:NSE:rotatingvf:lin:z}
        \frac{\partial \delta z}{\partial t}=-\AA(t)\delta z,
    \end{align}
where $\AA$ denotes the operator
    \begin{align}\label{def:A}
        \AA(t)\phi= -\Delta \phi+B_\veps(t)(\eta, \phi)+B_\veps(t)(\phi, \eta),
    \end{align}
where $\eta$ denotes a solution of \eqref{eq:NSE:rotatingvf:eta}. Let $\delta x_1^0, \delta x_2^0, \dots ,\delta x_N^0$ be N vectors in $L^2 (\Omega)$.

For each $i$, let $\delta x_i(t)$ be the solution of \eqref{eq:NSE:rotatingvf:lin:x} with the initial data $\delta x_i^0$ and let $\delta z_i(t)$ be the solution of \eqref{eq:NSE:rotatingvf:lin:z} with the initial data $\delta z_i^0=\de x_i^0$.
By uniqueness, we have
    \[
        \delta z_i(t)=e^{tL_\veps}\delta x_i(t),\quad t>0.
    \]
We also have that the volume of a parallelepiped is invariant under the rotation operator $e^{tL_\veps}$, i.e. 
\[|\delta x_1(t)\wedge \dots \wedge\delta x_\tN(t)|=|\delta z_1(t)\wedge \dots \wedge\delta z_\tN(t)|.\]
Indeed, one has the following general result. 

\begin{Lem}
Let $H$ be a Hilbert space and $U$ be any unitary operator on $H$. Then for any $N>0$
    \[
        |\delta x_1\wedge \dots \wedge\delta x_\tN|=|\delta z_1\wedge \dots \wedge\delta z_\tN|,
    \]
for all $j=1,\dots,\tN$, where $\delta z_j=U\delta x_j$.
\end{Lem}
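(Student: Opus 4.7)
The plan is to reduce the claim to the well-known Gram determinant formula for the magnitude of a wedge product. Specifically, for any vectors $\delta x_1,\dots,\delta x_N$ in a Hilbert space $H$, one has
\begin{align}\notag
    |\delta x_1\wedge\cdots\wedge \delta x_N|^2 = \det G(\delta x_1,\dots,\delta x_N),
\end{align}
where $G(\delta x_1,\dots,\delta x_N)$ is the Gram matrix with entries $G_{ij}=(\delta x_i,\delta x_j)_H$. This identity is standard; it can be derived by applying Gram--Schmidt orthogonalization and observing that the resulting upper-triangular change of basis has determinant equal to the product of the norms of the orthogonal residuals, which squared gives precisely the Gram determinant.

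With this identity in hand, the lemma reduces to a one-line observation. Setting $\delta z_j = U\delta x_j$, the unitarity of $U$ yields
\begin{align}\notag
    (\delta z_i,\delta z_j)_H = (U\delta x_i, U\delta x_j)_H = (\delta x_i, U^*U\delta x_j)_H = (\delta x_i,\delta x_j)_H,
\end{align}
so the Gram matrices satisfy $G(\delta z_1,\dots,\delta z_N) = G(\delta x_1,\dots,\delta x_N)$. Taking determinants and square roots, one concludes
\begin{align}\notag
    |\delta z_1\wedge\cdots\wedge\delta z_N| = |\delta x_1\wedge\cdots\wedge\delta x_N|,
\end{align}
which is the desired equality.

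There is no serious obstacle here; the only minor subtlety is justifying the Gram determinant formula at the level of generality one prefers. One may either cite it as standard (it is in, e.g., Temam's book on infinite-dimensional dynamical systems, which is already referenced in the paper), or give the short Gram--Schmidt argument above. In the application, $U=e^{tL_\veps}$ is unitary on $L^2(\Omega)$ by \eqref{E:unitary:skew}, so the lemma applies directly and yields the invariance of the $N$-dimensional volume element under the change of variables $\delta x\mapsto \delta z = e^{tL_\veps}\delta x$ used to pass from \eqref{eq:NSE:rotatingvf:lin:x} to \eqref{eq:NSE:rotatingvf:lin:z}.
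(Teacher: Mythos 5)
Your proof is correct. The paper itself states this lemma without providing any proof, treating it as a standard fact, so there is nothing to compare against; your Gram-determinant argument --- $|\delta x_1\wedge\cdots\wedge\delta x_N|^2=\det\big[(\delta x_i,\delta x_j)\big]$ combined with $(U\delta x_i,U\delta x_j)=(\delta x_i,\delta x_j)$ for unitary $U$ --- is precisely the standard justification the authors are implicitly invoking, and it fills the omitted details correctly.
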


From these observations, it suffices to consider the evolution of N-dimensional volume elements under the modified linearized flow defined by $\AA(t)$ in \eqref{eq:NSE:rotatingvf:lin:z}. 
Let
    \[
    V_\tN(t)=|\delta z_1(t)\wedge \dots \wedge\delta z_\tN(t)|.
    \]
Let $\PP_\tN(t)$ denote the projection onto the linear subspace of $L^2(\Omega)$ spanned by $\delta z_1(t),\dots, \delta z_\tN(t)$. The volume element $V_\tN(t)$ evolves according to the following equation (see \cite{Robinson2003}).
    \begin{align}\label{E:Vol}
        V_\tN(t)=V_\tN(0)\exp\left(-\int_0^t \Tr[\AA(s)\PP_\tN(s)]\,ds\right),
    \end{align}
 where $\Tr[\cdot]$ denotes the trace of a trace-class  operator on $L^2(\Omega)$.

To obtain an upper bound for $\text{dim}_H \Acal^\varepsilon$, we look for the smallest value of N for which
    \begin{align}\label{eq:main:claim}
        \langle \Tr[\AA\PP_\tN] \rangle>0,\quad \forall \, \omega(t)\in \Acal^\varepsilon,
    \end{align}
where $\langle\,\cdot \,\rangle$ denotes time average defined by
    \[
        \langle\, h\,\rangle=\limsup_{T \to \infty}\frac{1}{T}\int_0^T h(s) \, ds.
    \]

Let $\{\chi_1, \chi_2, \dots \chi_i,\dots \}$ be a fixed (time-independent) orthonormal basis of $L^2(\Omega)$. Let $\{\phi_1(t), \phi_2(t),\\ \dots \phi_\tN(t)\}$ be an orthonormal basis for $\PP_\tN(t)L^2(\Omega)$. Note that $\phi_j(t)$ may be different from $\chi_1,\dots,\chi_\tN$ at each $t$. In particular, we have 
    \begin{align}\label{eq:orthonormal:L2}
       { \|\chi_j\|_{L^2}^2}=\|\phi_j(t)\|_{L^2}^2=\|e^{tL_\veps}\phi_j(t)\|_{L^2}^2=1
    \end{align}
Now, since the trace is independent of the chosen basis, we have
    \begin{align}\label{eq:independence}
        \Tr[\AA(t)\PP_\tN(t)]=\sum_{j=1}^{\infty}(\AA(t)\PP_\tN(t)\chi_j,\chi_j)=\sum_{j=1}^{\tN}(\AA(t)\phi_j(t),\phi_j(t)).
    \end{align}
Let us also recall an important fact regarding $\Tr$: If $AB$ is trace-class in $L^2$ and  $B$ has finite-dimensional range or if $A,B$ are bounded, then $BA$ is trace-class and 
    \begin{align}\label{eq:commute}
        \Tr[AB]=\Tr[BA].
    \end{align}

Next, we decompose the operator $\AA$. 
Let $\BB$ denote the linear operator defined by
    \begin{align}\label{def:Beps}
         \BB(t)\ph=B_\veps(t)(\ph,\eta(t))    .
    \end{align}
Then observe that
    \begin{align}\label{eq:A:decomposition}
        \AA(t)\ph&=-\De\ph+\AA_0(t)\ph+\overline{\BB}(t)\ph+\widetilde{\BB}(t)\ph,
    \end{align}
where
    \begin{align}\label{def:barB:tilB}
        \AA_0(t)\ph=B_\veps(t)(\eta(t),\ph),\quad\overline{\BB}(t)\ph=B_\veps(t)(\ph,\overline{\eta}(t)),\quad \widetilde{\BB}(t)\ph=B_\veps(t)(\ph,\widetilde{\eta}(t)).
    \end{align}

We now have the following result.

\begin{Lem}\label{lem:tr:split}
For all $\tN>0$
    \begin{align}\label{eq:tr:split}
        \Tr[\AA(t)\PP_\tN(t)]=\Tr[-\De\PP_\tN(t)]+\Tr[\overline{\BB}(t)\PP_\tN(t)]+\Tr[\widetilde{\BB}(t)\PP_\tN(t)].
    \end{align}
\end{Lem}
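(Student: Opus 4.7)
The plan is to reduce this to the key identity $\Tr[\AA_0(t)\PP_\tN(t)]=0$, which in turn reduces to the familiar cancellation property of the nonlinearity from \eqref{E:Bdiv}.

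First, by the decomposition \eqref{eq:A:decomposition} and linearity of the trace, I write
\begin{align*}
\Tr[\AA(t)\PP_\tN(t)]=\Tr[-\De\PP_\tN(t)]+\Tr[\AA_0(t)\PP_\tN(t)]+\Tr[\bar{\BB}(t)\PP_\tN(t)]+\Tr[\tilde{\BB}(t)\PP_\tN(t)].
\end{align*}
Each summand makes sense as a trace: $-\De \PP_\tN$ composed with a finite-rank projection has finite-dimensional range, and the $\AA_0, \bar{\BB},\tilde{\BB}$ terms, being products of a bounded operator (on the attractor $\eta\in L^\infty_tH^m$ for any $m$ by \eqref{est:om:sobolev:Gr}) with a finite-rank projection, are likewise trace-class. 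Thus the three named terms on the right-hand side match the target identity, and the remaining task is to show that the middle term vanishes.

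To evaluate $\Tr[\AA_0(t)\PP_\tN(t)]$, I use the basis-independence of the trace as recorded in \eqref{eq:independence}, computing with an orthonormal basis $\{\phi_j(t)\}_{j=1}^{\tN}$ of $\PP_\tN(t)L^2(\Om)$:
\begin{align*}
\Tr[\AA_0(t)\PP_\tN(t)]=\sum_{j=1}^\tN(\AA_0(t)\phi_j(t),\phi_j(t))=\sum_{j=1}^\tN(B_\veps(t)(\eta(t),\phi_j(t)),\phi_j(t)).
\end{align*}
For each $j$, I unpack $B_\veps$ via \eqref{def:Bveps} and move $e^{tL_\veps}$ to the other slot using the unitarity/adjoint relation \eqref{E:unitary:skew}:
\begin{align*}
(B_\veps(t)(\eta,\phi_j),\phi_j)=(e^{tL_\veps}B(e^{-tL_\veps}\eta,e^{-tL_\veps}\phi_j),\phi_j)=(B(e^{-tL_\veps}\eta,e^{-tL_\veps}\phi_j),e^{-tL_\veps}\phi_j).
\end{align*}
Applying the nonlinear cancellation \eqref{E:Bdiv} to the argument $e^{-tL_\veps}\phi_j$ (in both the second and third slot) makes this last inner product vanish. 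Summing over $j$ yields $\Tr[\AA_0(t)\PP_\tN(t)]=0$, which gives \eqref{eq:tr:split}.

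There is no serious obstacle here; the only point to be careful about is verifying that the change of variables $\eta\mapsto e^{-tL_\veps}\eta$ at the heart of the definition of $B_\veps$ preserves the structural cancellation $(B(\cdot,\theta),\theta)=0$, and this is built directly into \eqref{E:unitary:skew}. The lemma thus reads as a bookkeeping statement that the Coriolis-twisted nonlinearity retains the same orthogonality property in the trace computation as the standard 2D Euler/Navier--Stokes nonlinearity.
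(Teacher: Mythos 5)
Your proposal is correct and follows essentially the same route as the paper: decompose $\AA$ via \eqref{eq:A:decomposition}, reduce to $\Tr[\AA_0\PP_\tN]=0$, unwind $B_\veps$ using the unitarity/adjoint relation \eqref{E:unitary:skew}, and invoke the cancellation \eqref{E:Bdiv}. The extra remarks on trace-class well-definedness are a harmless (and reasonable) addition not spelled out in the paper.
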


\begin{proof}

Using \eqref{eq:skew} and \eqref{E:Bdiv}, observe that
    \[
        \Tr[\AA_0(t)\PP_\tN(t)]
        =\sum_{j=1}^{\tN}(B_\veps(t)(\eta(t),\phi_j(t)),\phi_j(t))=\sum_{j=1}^{\tN}(B(\omega,e^{-tL_\veps}\phi_j(t)), e^{-tL_\veps}\phi_j (t))=0.
    \]
Then \eqref{eq:tr:split} follows from \eqref{eq:A:decomposition}.
\end{proof}

Thus, from \cref{lem:tr:split} in order to prove \eqref{eq:main:claim}, it suffices to study the remaining two terms
    \begin{align}\notag
        \Tr[\overline{\BB}(t)\PP_\tN(t)],\quad \Tr[\widetilde{\BB}(t)\PP_\tN(t)].
    \end{align}
In order to do so, observe that from \eqref{eq:independence} and the Rayleigh-Ritz principle (see \cite{Robinson2003}), we have
\begin{align}\label{E:A0}
    \sum_{j=1}^{\tN}\|\nabla\phi_j(t)\|_{L^2}^2=\Tr[- \Delta \PP_\tN(t)]\ge C \tN^2,
\end{align}
for all $t\geq0$. In light of \eqref{eq:orthonormal:L2}, \eqref{eq:independence}, it follows that
    \begin{align}\label{eq:trace:phi}
        \tN=\sum_{j=1}^{\tN}\|\phi_j(t)\|_{L^2}^2 = \sum_{j=1}^{\tN}\|e^{-sL_\veps}\phi_j(t)\|_{L^2}^2\leq C\sqrt{\Tr[-\De\PP_{\tN}(t)]}
    \end{align}
We will also make crucial use of the following result from \cite{DoeringGibbon1991}, which was originally proved in \cite{Constantin1987}, but was proved in a convenient form for the vorticity formulation in \cite{DoeringGibbon1991}. 

\begin{Lem}\label{lem:DG1991}
Suppose $\{\psi_j(t)\}_{j=1}^{\tN}\subset L^2$. Let $\PP_\tN$(t) denote projection onto the subspace $\vspan\{\psi_j(t)\}_{j=1}^{\tN}$. If $\{\nabla \psi_j(t)\}_{j=1}^{\tN}$ is orthonormal in $L^2$, then
    \begin{align}\notag
        \left\Vert \sum_{j=1}^{\tN}|\psi_j(t)|^2\right\Vert_{L^{\infty}}\le C\left(1+\log\left(\Tr[- \Delta \PP_\tN(t)]\right)\right).
    \end{align}
\end{Lem}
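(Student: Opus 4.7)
The plan is to reduce the pointwise bound on $\rho(x_0) := \sum_{j=1}^{\tN}|\psi_j(x_0)|^2$ to an $L^\infty$ bound on a single auxiliary function, and to obtain the latter from a two-dimensional logarithmic Sobolev inequality adapted to the finite-dimensional subspace $V := \vspan\{\psi_j\}_{j=1}^{\tN}$.

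First, fix $x_0 \in \Omega$ and introduce
$$
\varphi(x) \;:=\; \sum_{j=1}^{\tN} \psi_j(x_0)\,\psi_j(x),
$$
which is precisely the reproducing kernel of $V$ at $x_0$ with respect to the $\dot H^1$-inner product. Evaluating at $x_0$ gives $\varphi(x_0) = \rho(x_0)$, and the hypothesis that $\{\nabla \psi_j\}$ is orthonormal in $L^2$ yields
$$
\|\nabla \varphi\|_{L^2}^2 \;=\; \sum_{j,k} \psi_j(x_0)\overline{\psi_k(x_0)}\,(\nabla\psi_j,\nabla\psi_k)_{L^2} \;=\; \sum_{j=1}^{\tN} |\psi_j(x_0)|^2 \;=\; \rho(x_0).
$$
Hence $\varphi(x_0) = \|\nabla \varphi\|_{L^2}^2 = \rho(x_0)$, reducing the problem to an $L^\infty$ bound on $\varphi$ in terms of $\|\nabla \varphi\|_{L^2}$ and the spectral data of $V$.

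Second, I would invoke a two-dimensional Brezis--Gallouet--Wainger-type logarithmic Sobolev inequality, tailored to $V$. Let $\{e_j\}_{j=1}^{\tN}$ be the $L^2$-orthonormal eigenbasis of $-\PP_\tN \Delta \PP_\tN|_V$ with eigenvalues $0 < \mu_1 \le \dots \le \mu_\tN$, so that $\sum_{j}\mu_j = \Tr[-\Delta\PP_\tN]$ and in particular $\mu_\tN \le \Tr[-\Delta\PP_\tN]$. A Fourier/Littlewood--Paley decomposition with cutoff $K \sim \sqrt{\mu_\tN}$, combined with a Bernstein-type estimate on the finite-dimensional subspace $V$, should yield
$$
\|\varphi\|_{L^\infty}^{2} \;\le\; C\,\|\nabla \varphi\|_{L^2}^{2}\bigl(1 + \log \Tr[-\Delta \PP_\tN]\bigr).
$$
Substituting the identity $\varphi(x_0) = \|\nabla \varphi\|_{L^2}^2 = \rho(x_0)$ and solving the resulting quadratic inequality gives $\rho(x_0) \le C(1+\log\Tr[-\Delta\PP_\tN])$; since $x_0 \in \Omega$ is arbitrary, taking essential supremum concludes the proof.

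The main obstacle is the logarithmic Sobolev inequality displayed above. The classical Brezis--Gallouet--Wainger bound $\|\varphi\|_{L^\infty}^{2} \lesssim \|\nabla\varphi\|_{L^2}^2 \log(e + \|\Delta\varphi\|_{L^2}/\|\nabla\varphi\|_{L^2})$ requires an $H^2$-bound on $\varphi$, which we do not possess since the individual $\psi_j$ are only assumed to lie in $H^1$. The remedy is to replace the full Laplacian with the \emph{subspace Laplacian} $\PP_\tN \Delta$: for $\varphi = \sum_j c_j e_j \in V$ one has $\|\PP_\tN \Delta\,\varphi\|_{L^2}^2 = \sum_j \mu_j^2|c_j|^2 \le \mu_\tN \|\nabla\varphi\|_{L^2}^2$. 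The technical heart of the proof is to verify that the usual high-frequency tail estimate in the Brezis--Gallouet--Wainger argument can be closed using this subspace-adapted bound in place of $\|\Delta\varphi\|_{L^2}$, exploiting the rank-$\tN$ structure of the family $\{\hat\psi_j(k)\}_k$ and the Parseval identity $\sum_k |k|^2 \sum_j |\hat\psi_j(k)|^2 = \tN$.
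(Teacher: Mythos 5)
Your first and last steps coincide with the classical argument of Constantin (1987) and Doering--Gibbon (1991) that the paper cites without reproving: the reproducing-kernel function $\varphi=\sum_j\psi_j(x_0)\psi_j$ with $\varphi(x_0)=\|\nabla\varphi\|_{L^2}^2=\rho(x_0)$, followed by the quadratic bootstrap. The gap is the middle step, and it is not merely technical. Any Brezis--Gallouet-type splitting must control the high-frequency tail $\sum_{|k|>K}|\hat\varphi(k)|$, and since $\sum_{|k|>K}|k|^{-2}=\infty$ in two dimensions, this tail cannot be closed from $\|\nabla\varphi\|_{L^2}$ alone; some genuinely higher-order information on $\varphi$ is indispensable. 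The quantity $\|\PP_\tN\Delta\varphi\|_{L^2}^2=\sum_j\mu_j^2|c_j|^2$ you propose as a substitute is $H^1$ information in disguise --- it is computed entirely from the Gram matrices of $\{\psi_j\}$ and $\{\nabla\psi_j\}$ and says nothing about the Fourier coefficients of $\varphi$ at frequencies not resolved by $V$. Indeed, the inequality $\|\varphi\|_{L^\infty}^2\le C\|\nabla\varphi\|_{L^2}^2(1+\log\Tr[-\Delta\PP_\tN])$, with $\PP_\tN$ the projection onto $\vspan\{\psi_j\}$ as in your reading, is \emph{false}: take $\tN=1$ and $\hat\psi(k)=c_K|k|^{-2}(1+\log|k|)^{-1}$ for $1\le|k|\le K$ (symmetrized so $\psi$ is real, with $c_K$ normalizing $\|\nabla\psi\|_{L^2}=1$; then $c_K\sim 1$). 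One checks $\|\psi\|_{L^2}\sim 1$ uniformly in $K$, so $\Tr[-\Delta\PP_1]=\|\nabla\psi\|_{L^2}^2/\|\psi\|_{L^2}^2=O(1)$, while $\psi(0)=\sum_k\hat\psi(k)\gtrsim\log\log K\to\infty$. No rank-$\tN$ or Bernstein-type structure can rescue the tail estimate, because $H^1(\TT^2)\not\hookrightarrow L^\infty$ even on one-dimensional subspaces.

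The repair is to read $\Tr[-\Delta\PP_\tN]$ as the lemma is actually used in \eqref{E:basis-Linfinity}: there $\psi_j=-\nabla^\perp(-\Delta)^{-1}e^{-sL_\veps}\phi_j$ with $\{\phi_j\}$ orthonormal in $L^2$, so $\|\Delta\psi_j\|_{L^2}=\|\nabla\phi_j\|_{L^2}$ and the trace (taken over $\vspan\{\phi_j\}$, not $\vspan\{\psi_j\}$) equals $\sum_j\|\Delta\psi_j\|_{L^2}^2$. With this reading the $\psi_j$ lie in $H^2$, and Cauchy--Schwarz applied to your $\varphi$ gives $\|\Delta\varphi\|_{L^2}\le\rho(x_0)^{1/2}\bigl(\sum_j\|\Delta\psi_j\|_{L^2}^2\bigr)^{1/2}$. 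The ordinary two-dimensional Brezis--Gallouet inequality (with cutoff $K^2\sim\|\Delta\varphi\|_{L^2}^2/\|\nabla\varphi\|_{L^2}^2$) then yields $\rho(x_0)=\varphi(x_0)\le C\rho(x_0)^{1/2}\bigl(1+\log\sum_j\|\Delta\psi_j\|_{L^2}^2\bigr)^{1/2}$, and your bootstrap closes. In short, the missing ingredient is not a subspace-adapted Laplacian but the honest second-derivative sum $\sum_j\|\Delta\psi_j\|_{L^2}^2$, which is exactly what the symbol $\Tr[-\Delta\PP_\tN]$ encodes in the paper's application.
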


\subsection{Treating  $\langle\Tr[\widetilde{\BB}\PP_\tN]\rangle$
} 
We derive the following estimate.
\begin{Lem}\label{lem:tracebound1:final}
    \begin{align}\label{E:tracebound1:final}
        \left\langle \Tr[\widetilde{\BB}\PP_\tN]\right\rangle \ge -C\left\langle \Sob{\nabla \widetilde{\omega}}{L^2}^2 \right\rangle^{1/2} \left \langle\Tr[-\Delta \PP_\tN]\right\rangle^{1/4}\left(1+\log\left \langle \Tr[- \Delta \PP_\tN]\right\rangle\right)^{1/2}.
    \end{align}
\end{Lem}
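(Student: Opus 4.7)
The plan is to unfold the unitary $e^{tL_\veps}$ structure in $\tilde{\BB}$, apply two Cauchy--Schwarz inequalities to reduce the trace to a logarithmic Sobolev-type estimate, and then pass to time averages, using $\langle\Tr[\tilde{\BB}\PP_\tN]\rangle \ge -\langle|\Tr[\tilde{\BB}\PP_\tN]|\rangle$ to convert the resulting upper bound into the desired lower bound. First I would introduce the family $\psi_j(t) := e^{-tL_\veps}\phi_j(t)$, which is $L^2$-orthonormal because $e^{-tL_\veps}$ is unitary. Since $L_\veps$ is a Fourier multiplier that annihilates zonal modes, it preserves the zonal/non-zonal decomposition, so in particular $e^{-tL_\veps}\tilde\eta = \tilde\omega$. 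Combined with \eqref{def:Bveps}, this gives
\begin{align*}
\Tr[\tilde{\BB}(t)\PP_\tN(t)] = \sum_{j=1}^\tN (B(\psi_j,\tilde\omega), \psi_j) = \sum_{j=1}^\tN \int u_j\cdot\nabla\tilde\omega\, \psi_j\, dx,
\end{align*}
where $u_j := -\nabla^\perp(-\Delta)^{-1}\psi_j$ is the Biot--Savart velocity field associated with the vorticity $\psi_j$.

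Next I would apply Cauchy--Schwarz in $L^2(\Om)$ followed by Cauchy--Schwarz in the summation index $j$ (pointwise in $x$) to obtain
\begin{align*}
|\Tr[\tilde{\BB}(t)\PP_\tN(t)]| \le \|\nabla\tilde\omega\|_{L^2}\left\|\sum_j |u_j|^2\right\|_{L^\infty}^{1/2} \sqrt\tN,
\end{align*}
where the $\sqrt\tN$ factor comes from $\|\sum_j|\psi_j|^2\|_{L^1} = \sum_j\|\psi_j\|_{L^2}^2 = \tN$. The crucial remaining step is to prove a logarithmic bound on $\|\sum_j|u_j|^2\|_{L^\infty}$. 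A direct Fourier calculation shows $(\nabla u_j, \nabla u_k)_{L^2} = (\psi_j, \psi_k) = \delta_{jk}$, so the matrix-valued family $\{\nabla u_j\}$ is orthonormal in $L^2$, and a vector-valued adaptation of \cref{lem:DG1991} gives $\|\sum_j|u_j|^2\|_{L^\infty} \le C(1+\log\tN)$. Combining with \eqref{eq:trace:phi}, which yields $\tN \le C\sqrt{\Tr[-\Delta\PP_\tN(t)]}$, allows one to replace $\log\tN$ by $\log\Tr[-\Delta\PP_\tN(t)]$ up to a universal constant.

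Finally, I would time-average the pointwise bound
\begin{align*}
|\Tr[\tilde{\BB}(t)\PP_\tN(t)]| \le C\|\nabla\tilde\omega(t)\|_{L^2}\sqrt\tN\bigl(1+\log\Tr[-\Delta\PP_\tN(t)]\bigr)^{1/2}
\end{align*}
using Cauchy--Schwarz in time to separate $\langle\|\nabla\tilde\omega\|_{L^2}^2\rangle^{1/2}$, Jensen's inequality applied to the concave logarithm to pull the time average inside $\log$, and the time-averaged form of \eqref{eq:trace:phi} (i.e., $\tN^2 \le C\langle\Tr[-\Delta\PP_\tN]\rangle$) to upgrade $\sqrt\tN$ into $\langle\Tr[-\Delta\PP_\tN]\rangle^{1/4}$. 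The hardest step is the application of \cref{lem:DG1991} to the vector-valued family $\{u_j\}$: the lemma as stated applies to scalar functions with orthonormal $\{\nabla\psi_j\}$, whereas here the natural orthonormality is for the matrix-valued family $\{\nabla u_j\}$ of Biot--Savart velocities. I expect this to reduce to the scalar lemma by a componentwise or reproducing-kernel argument on $\vspan\{u_j\}\subset L^2(\Om;\RR^2)$ in the spirit of the original Constantin--Foias and Doering--Gibbon arguments.
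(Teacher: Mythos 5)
Your proposal is correct and follows essentially the same route as the paper: unfold the unitary conjugation, introduce the Biot--Savart velocities of the orthonormal rotated vorticities (your $u_j$ are exactly the paper's $\psi_j$ in \eqref{def:psij}), apply the double Cauchy--Schwarz, invoke \cref{lem:DG1991} for the family with orthonormal gradients, use \eqref{eq:trace:phi} to convert $\sqrt{\tN}$ into $\Tr[-\Delta\PP_\tN]^{1/4}$, and finish with Cauchy--Schwarz and Jensen in the time average. The paper simply applies \cref{lem:DG1991} directly to the vector-valued family (as in Doering--Gibbon), so the reduction you flag as the ``hardest step'' is already absorbed into the statement of that lemma as used here.
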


\begin{proof}

Recall the relation between $\om$ and $\eta$ defined by \eqref{def:eta} and the definition of $B_\veps(t)$ in \eqref{def:Bveps}. Then we observe that 
    \begin{align*}
        \Tr[\widetilde{\BB}(s)\PP_\tN(s)]&= \sum_{j=1}^{\tN}\left(\widetilde{\BB}(s)\phi_j(s),\phi_j(s)\right)
        =\sum_{j=1}^{\tN} \left(B_\veps(s)(\phi_j(s),\widetilde{\eta}), \phi_j(s)\right)\\
        &=\sum_{j=1}^{\tN}\left(B(e^{-sL_\veps}\phi_j(s),\widetilde{\omega}),e^{-sL_\veps}\phi_j(s) \right).
    \end{align*}
For each $j=1,\dots, \tN$, let $\psi_j$ be
    \begin{align}\label{def:psij}
        \psi_j(t)=-\nabla^{\perp}(-\Delta)^{-1}e^{-tL_\veps}\phi_j(t).
    \end{align} 
By the Cauchy-Schwarz inequality and \eqref{eq:orthonormal:L2} we have
    \begin{align}\label{E:CS1}
        \sum_{j=1}^{\tN}\left(B(e^{-sL_\veps}\phi_j(s),\widetilde{\eta}),e^{-sL_\veps}\phi_j(s) \right)&\ge -\int \left[\left(\sum_{j=1}^{\tN}|e^{-sL_\veps}\phi_j|^2\right)^{1/2}\left(\sum_{j=1}^{\tN}|\psi_j(s)|^2\right)^{1/2}|\nabla \widetilde{\omega}|\right]\,dxdy\notag\\
        &\ge -\left\Vert \left(\sum_{j=1}^{\tN}|\psi_j(s)|^2\right)^{1/2}\right\Vert_{L^{\infty}}\left(\sum_{j=1}^{\tN}\|e^{-sL_\veps}\phi_j(s)\|_{L^2}^2\right)^{1/2} \Sob{\nabla \widetilde{\omega}}{L^2}.
    \end{align}
Since $\{\nabla \psi_j(t)\}_{j=1}^{\tN}$ is orthonormal in $L^2$, it follows from  \cref{lem:DG1991} that
    \begin{align}\label{E:basis-Linfinity}
        \left\Vert \left(\sum_{j=1}^{\tN}|\psi_j(s)|^2\right)^{1/2}\right\Vert_{L^{\infty}}\le C\left(1+\log\left(\Tr[- \Delta \PP_\tN(s)]\right)\right)^{1/2}.
    \end{align}
Upon returning to \eqref{E:CS1} and applying \eqref{eq:trace:phi} and \eqref{E:basis-Linfinity}, we obtain
    \begin{align*}
        \Tr[\widetilde{\BB}(s)\PP_\tN(s)]\ge -C\Sob{\nabla \widetilde{\omega}}{L^2}\left(\Tr[-\Delta \PP_\tN(s)]\right)^{1/4}\left(1+\log\left(\Tr[- \Delta \PP_\tN(s)]\right)\right)^{1/2}.
    \end{align*}
Taking the long-time average, applying the Cauchy-Schwarz inequality and then invoking Jensen's inequality for the function $g(y)=\sqrt{y}(1+\log(y))$ for $y>1/e$, we obtain
    \begin{align}\notag
        \left\langle \Tr[\widetilde{\BB}\PP_\tN]\right\rangle \ge -C\left\langle \Sob{\nabla \widetilde{\omega}}{L^2}^2 \right\rangle^{1/2} \left \langle\Tr[-\Delta \PP_\tN]\right\rangle^{1/4}\left(1+\log\left \langle \Tr[- \Delta \PP_\tN]\right\rangle\right)^{1/2},
    \end{align}
which is precisely \eqref{E:tracebound1:final}.
\end{proof}

\subsection{Treating 
$\langle\Tr[\overline{\BB}\PP_\tN]\rangle$
}

First, we decompose the trace of $\overline{\BB}$ further. 

\begin{Lem}\label{lem:Bbar:decomposition}
Define the operators
    \begin{align}\label{def:Bbar1:2}
        \overline{\BB}_1(s)h=\frac{i}{2}e^{sL_\veps}(\partial_y \overline{\omega}e^{-sL_\veps}h),\qquad \overline{\BB}_{2}(s)h=\frac{i}{2}e^{sL_\veps}(\partial_s \partial_y \overline{\omega}e^{-sL_\veps}h).
    \end{align}
Then
    \begin{align}\label{eq:Bbar:decomposition}
        \Tr&[\overline{\BB}(s)\PP_\tN(s)]\notag
        \\
        &=-\varepsilon \partial_s\Tr[\PP_\tN(s)\overline{\BB}_1(s)\PP_\tN(s)]+2\varepsilon \Tr[\PP_\tN(s)\overline{\BB}_1(s)(\partial_s\PP_\tN)(s)]+\varepsilon \Tr[\PP_\tN(s)\overline{\BB}_{2}(s)\PP_\tN(s)].
    \end{align}
\end{Lem}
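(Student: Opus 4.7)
The plan is to exploit the oscillatory character of the rotation operator through a differentiation-by-parts in time of the kind used in the Babin--Ilyin--Titi/Al-Jaboori--Wirosoetisno framework, which will produce the factor of $\veps$ appearing throughout \eqref{eq:Bbar:decomposition}. The starting observation is that because $L_\veps = -\veps^{-1}\partial_x(-\De)^{-1}$ vanishes identically on zonal modes, we have $\bar\eta(s)=\bar\omega(s)$, and since $\bar\omega$ is a function of $y$ alone, only the $y$-derivative survives in $B(\,\cdot\,,\bar\omega)$. Hence
$$
\bar\BB(s)\phi = -e^{sL_\veps}\bigl[\partial_x(-\De)^{-1}(e^{-sL_\veps}\phi)\cdot \partial_y\bar\omega\bigr].
$$
Evaluating the trace against an orthonormal basis $\{\phi_j(s)\}_{j=1}^{\tN}$ of $\PP_\tN(s)L^2$ and invoking the unitarity of $e^{sL_\veps}$, I would obtain, with $u_j(s):=e^{-sL_\veps}\phi_j(s)$,
$$
\Tr[\PP_\tN\bar\BB\PP_\tN](s) = -\sum_{j=1}^{\tN}\int \partial_y\bar\omega(s,y)\,u_j(s)\,\partial_x(-\De)^{-1}u_j(s)\,dx.
$$

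Next I would invoke the Fourier-multiplier identity $\partial_x(-\De)^{-1} = -\veps L_\veps$, which combined with $\partial_s e^{-sL_\veps} = -L_\veps e^{-sL_\veps}$ yields, for a time-dependent $\phi_j(s)$,
$$
\partial_x(-\De)^{-1}u_j(s) = \veps\,\partial_s u_j(s) - \veps\,e^{-sL_\veps}\partial_s\phi_j(s).
$$
This is where the $\veps$-gain is extracted. Substituting back and using $u_j\partial_s u_j = \tfrac{1}{2}\partial_s(u_j^2)$, the first resulting piece can be rearranged into a total $s$-derivative of $\int \partial_y\bar\omega\,u_j^2\,dx$ plus a contribution containing $\partial_s\partial_y\bar\omega$, via the product rule. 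After reinserting $e^{\pm sL_\veps}$ inside each integrand (again using unitarity), these two pieces are precisely $-\veps\partial_s \Tr[\PP_\tN\bar\BB_1\PP_\tN]$ and $\veps\,\Tr[\PP_\tN\bar\BB_2\PP_\tN]$. The third contribution, coming from the $e^{-sL_\veps}\partial_s\phi_j$ term, takes the form $2\veps\sum_j (\bar\BB_1(s)\phi_j,\partial_s\phi_j)$.

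The final and most delicate step is to identify this last sum with $2\veps\,\Tr[\PP_\tN\bar\BB_1\partial_s\PP_\tN]$. My plan is first to note that $\bar\BB_1$ is self-adjoint, being the unitary conjugation of multiplication by the real function $\partial_y\bar\omega$. Differentiating $\PP_\tN^2=\PP_\tN$ in $s$ forces $\PP_\tN(\partial_s\PP_\tN)\PP_\tN=0$, and combining this with $\partial_s\phi_j = \partial_s(\PP_\tN\phi_j) = (\partial_s\PP_\tN)\phi_j + \PP_\tN\partial_s\phi_j$ gives $(\partial_s\PP_\tN)\phi_j = (I-\PP_\tN)\partial_s\phi_j$. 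Then, using self-adjointness of $\bar\BB_1$ together with the antisymmetry $(\phi_k,\partial_s\phi_j)=-(\partial_s\phi_k,\phi_j)$ obtained by differentiating $(\phi_j,\phi_k)=\delta_{jk}$, the correction sum $\sum_j(\bar\BB_1\PP_\tN\partial_s\phi_j,\phi_j)$ vanishes, which yields the desired identification. The main obstacle is the careful bookkeeping of simultaneous time-dependence of $\phi_j(s)$, $\PP_\tN(s)$, $\bar\omega(s)$, and the conjugating unitary $e^{\pm sL_\veps}$; once disentangled, each intermediate manipulation is elementary.
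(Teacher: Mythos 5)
Your argument is correct in substance and reaches the stated decomposition, but it travels a genuinely different road from the paper's. The paper works entirely in Fourier space: it introduces the interaction coefficients $\Gamma_{jkl}$, symmetrizes them so that the total phase $\lambda_j+\lambda_k$ appears as a prefactor of $e^{-i(\lambda_j+\lambda_k)s/\veps}$, and then trades that exponential for $\tfrac{i\veps}{\lambda_j+\lambda_k}\partial_s e^{-i(\lambda_j+\lambda_k)s/\veps}$ before applying the product rule. Your physical-space identity $\partial_x(-\De)^{-1}=-\veps L_\veps$ together with $\partial_s e^{-sL_\veps}=-L_\veps e^{-sL_\veps}$ is exactly the operator-level packaging of that manipulation, and your symmetrization $u_j\,\partial_s u_j=\tfrac12\partial_s(u_j^2)$ plays the role of the paper's $\Gamma_{jkl}+\Gamma_{kjl}$ step; your version has the advantage of making it transparent that no small divisor ever appears. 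The one place where the two proofs diverge materially is the emergence of the $\partial_s\PP_\tN$ term: the paper computes the trace against the \emph{fixed} basis $\{\chi_i\}$, so differentiating $\PP_\tN(s)\chi_i$ produces $(\partial_s\PP_\tN)\chi_i$ automatically and the two cross terms combine into $2\veps\Tr[\PP_\tN\bar\BB_1\partial_s\PP_\tN]$ with no further argument; you instead use the moving orthonormal frame $\{\phi_j(s)\}$, which forces you to reconcile $\partial_s\phi_j$ with $(\partial_s\PP_\tN)\phi_j=(\mathbb{I}-\PP_\tN)\partial_s\phi_j$ and to kill the tangential correction $\sum_j(\PP_\tN\bar\BB_1\phi_j,\partial_s\phi_j)$ via self-adjointness of the conjugated multiplication operator and the antisymmetry of $(\phi_k,\partial_s\phi_j)$ --- that extra argument is needed in your setup and is carried out correctly. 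One bookkeeping remark: your derivation naturally produces $\tfrac12 e^{sL_\veps}(\partial_y\bar\om\, e^{-sL_\veps}\cdot)$ rather than the $\tfrac{i}{2}$ appearing in \eqref{def:Bbar1:2}; since $\Tr[\PP_\tN\bar\BB\PP_\tN]$ is real while the right-hand side of \eqref{eq:Bbar:decomposition} with the $i$ would be purely imaginary, the discrepancy traces to a dropped factor of $i$ in the paper's conversion $l_2\mathcal{F}_l(\bar\om)\mapsto\mathcal{F}_l(\partial_y\bar\om)$ in \eqref{E:IBP}, not to an error on your part; in any case only $|\Tr[\PP_\tN\bar\BB_1\PP_\tN]|$ and its relatives enter the subsequent estimates, so nothing downstream is affected.
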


\begin{proof}
Observe that 
    \[
        \Tr[\overline{\BB}\PP_\tN]=\Tr[\overline{\BB}\PP^2_\tN]=\Tr[\PP_\tN\overline{\BB}\PP_\tN].
    \]
By definition, we have
    \begin{align*}
        \Tr[\PP_\tN(s)\overline{\BB}(s)\PP_\tN(s)]&= \sum_{i=1}^{\infty}\left(\PP_\tN(s)\overline{\BB}(s)\PP_\tN(s)\chi_i,\chi_i\right)
        \\
        &=\sum_{i=1}^{\infty} \left(\overline{\BB}(s)\PP_\tN(s)\chi_i,\PP_\tN(s)\chi_i\right)
        \\
        &=\sum_{i=1}^{\infty} \left(B_\veps(t)(\PP_\tN(s)\chi_i,\overline{\eta}(s)), \PP_\tN(s)\chi_i\right)
        \\
        &=\sum_{i=1}^{\infty}\left(B(e^{-sL_\veps}\PP_\tN(s)\chi_i,\overline{\omega}(s)),e^{-sL_\veps}\PP_\tN(s)\chi_i \right).
\end{align*}
Let us define
    \[
         \Gamma_{jkl}:=\left(B(e^{ij\cdot x},e^{ik\cdot x}),e^{il\cdot x}\right)=4\pi^2 \frac{j_1 k_2-j_2 k_1}{|j|^2}\delta_{j+k-l}.
    \]
Then we have (see \cite{MustafaDjoko2011})
    \begin{align*}\Gamma_{jkl}+\Gamma_{kjl}=
        \begin{cases}
            0,\quad &\text{if} \quad \lambda_j+\lambda_k=0,\quad l_1=0,
            \\
            -4\pi^2l_2(\lambda_j+\lambda_k)\delta_{j+k-l}, \quad & \text{if} \quad l_1=0.
        \end{cases}
    \end{align*}
Upon applying the Plancherel's theorem, we have
    \begin{align}\label{E:IBP}
        &\left(B(e^{-sL_\veps}\PP_\tN(s)\chi_i,e^{-sL_\veps}\PP_\tN(s)\chi_i),\overline{\omega}\right)\notag
        \\
        &=\sum_{j,k,l \in \mathbb{Z}^2}\Gamma_{jkl}\mathcal{F}_j(\PP_\tN(s)\chi_i)\mathcal{F}_k(\PP_\tN(s)\chi_i)\overline{\mathcal{F}_l(\overline{\omega})}e^{-i(\lambda_j+\lambda_k)s/\varepsilon}\notag
        \\
        &=\frac{1}{2}\sum_{j,k,l \in \mathbb{Z}^2}\left(\Gamma_{jkl}+\Gamma_{kjl}\right)\mathcal{F}_j(\PP_\tN(s)\chi_i)\mathcal{F}_k(\PP_\tN(s)\chi_i)\overline{\mathcal{F}_l(\overline{\omega})}e^{-i(\lambda_j+\lambda_k)s/\varepsilon}\notag
        \\
        &=2\pi^2\sum_{j,k,l \in \mathbb{Z}^2,\, l_1=0}-l_2(\lambda_j+\lambda_k)\delta_{j+k-l}\mathcal{F}_j(\PP_\tN(s)\chi_i)\mathcal{F}_k(\PP_\tN(s)\chi_i)\overline{\mathcal{F}_l(\overline{\omega})}e^{-i(\lambda_j+\lambda_k)s/\varepsilon}\notag
        \\
        &=-i2\pi^2\sum_{j,k,l \in \mathbb{Z}^2,\, l_1= 0}\varepsilon l_2\delta_{j+k-l}\mathcal{F}_j(\PP_\tN(s)\chi_i)\mathcal{F}_k(\PP_\tN(s)\chi_i)\overline{\mathcal{F}_l(\overline{\omega})}\partial_s e^{-i(\lambda_j+\lambda_k)s/\varepsilon}\notag
        \\
        &=-i2\pi^2\varepsilon\sum_{j,k,l \in \mathbb{Z}^2,\, l_1= 0} \partial_s \left(\delta_{j+k-l}\mathcal{F}_j(\PP_\tN(s)\chi_i)\mathcal{F}_k(\PP_\tN(s)\chi_i)\overline{\mathcal{F}_l(\partial_y \overline{\omega})}e^{-i(\lambda_j+\lambda_k)s/\varepsilon}\right)\notag
        \\
        &\quad+i2\pi^2\varepsilon\sum_{j,k,l \in \mathbb{Z}^2,\, l_1= 0} \delta_{j+k-l}\mathcal{F}_j(\partial_s \PP_\tN(s)\chi_i)\mathcal{F}_k(\PP_\tN(s)\chi_i)\overline{\mathcal{F}_l(\partial_y \overline{\omega})}e^{-i(\lambda_j+\lambda_k)s/\varepsilon}\notag
        \\
        &\quad+i2\pi^2\varepsilon\sum_{j,k,l \in \mathbb{Z}^2,\, l_1= 0} \delta_{j+k-l} \mathcal{F}_j( \PP_\tN(s)\chi_i)\mathcal{F}_k(\partial_s\PP_\tN(s)\chi_i)\overline{\mathcal{F}_l(\partial_y \overline{\omega})}e^{-i(\lambda_j+\lambda_k)s/\varepsilon}\notag
        \\
        &\quad+i2\pi^2\varepsilon\sum_{j,k,l \in \mathbb{Z}^2,\, l_1= 0} \delta_{j+k-l} \mathcal{F}_j( \PP_\tN(s)\chi_i)\mathcal{F}_k(\PP_\tN(s)\chi_i)\overline{\mathcal{F}_l(\partial_s \partial_y\overline{\omega})}e^{-i(\lambda_j+\lambda_k)s/\varepsilon}\notag
        \\
        &=-\frac{i\varepsilon}{2} \partial_s \left((e^{-sL_\veps}\PP_\tN(s)\chi_i)^2,\partial_y \overline{\omega}\right)+i\varepsilon \left((e^{-sL_\veps}\partial_s \PP_\tN(s)\chi_i)^2,\partial_y \overline{\omega}\right),\notag
        \\
        &\quad+ \frac{i\varepsilon}{2} \left((e^{-sL_\veps}\PP_\tN(s)\chi_i)^2,\partial_s \partial_y \overline{\omega}\right)
\end{align}
Finally, upon summing over $i$, we obtain
\begin{align*}
    \Tr[\PP_\tN(s)\overline{\BB}(s)\PP_\tN(s)]&=-\varepsilon \partial_s\Tr[\PP_\tN(s)\overline{\BB}_1(s)\PP_\tN(s)]+2\varepsilon \Tr[\PP_\tN(s)\overline{\BB}_1(s)(\partial_s\PP_\tN)(s)]\\
    &\quad+\varepsilon \Tr[\PP_\tN(s)\overline{\BB}_{2}(s)\PP_\tN(s)],
\end{align*}
which is precisely \eqref{eq:Bbar:decomposition}.

\end{proof}

Due to the uniform-in-time estimates \eqref{est:om:sobolev} satisfied by $\omega$, we have that $\overline{\BB}_1$ is a bounded operator. We may then deduce from \cref{lem:time:avg:bdd:op} (see \cref{sect:app:A}) that
    \[
        \left\langle \partial_s\Tr[\PP_\tN\overline{\BB}_1\PP_\tN]\right\rangle=0.
    \]
By \cref{lem:Bbar:decomposition}, we are left to treat the long-time average of the next two terms in \eqref{eq:Bbar:decomposition}.

\subsubsection{Treating $\langle \Tr[\PP_\tN\overline{\BB}_1\partial_s\PP_\tN]\rangle$}

The main result of this section is the following estimate.

\begin{Lem}\label{lem:tracebound234:final}
\begin{align}\label{E:tracebound234:final}
     &\lb\Tr[\PP_\tN\overline{\BB}_1\partial_s\PP_\tN]\rb\notag
     \\
     &\geq  -C\left(\sup_{s}\Sob{u(s)}{L^\infty}\Sob{\partial_{y} \overline{\omega}(s)}{L^2}^{\frac{3}{4}}\right) \lbn \Sob{\partial_{y} \overline{\omega}}{L^2}^2 \rbn^{\frac{1}{8}}\lbn \Tr[-\Delta \PP_\tN] \rbn^{\frac{7}{8}}\notag
     \\
     &\quad-C\left(\sup_{s}\Sob{\partial_y \overline{\omega}(s)}{L^\infty}\right)\left\langle \Sob{\nabla {\omega}}{L^2}^2 \right\rangle^{1/2} \left \langle\Tr[-\Delta \PP_\tN]\right\rangle^{1/4}\left(1+\log\left \langle\Tr[- \Delta \PP_\tN]\right\rangle\right)^{1/2}\notag
     \\
     &\quad -C\left(\sup_{s}\Sob{\partial_y \overline{\omega}(s)}{L^\infty}\right) \lbn \Tr[-\Delta \PP_\tN] \rbn-C\left (\sup_{s}\Sob{\partial_{yy} \overline{\omega}(s)}{L^2}^{\frac{3}{4}}\right)  \lbn \Sob{\partial_{yy} \overline{\omega}}{L^2}^2\rbn ^{\frac{1}{8}}\lbn \Tr[-\Delta \PP_\tN] \rbn^{\frac{7}{8}}.
\end{align}
\end{Lem}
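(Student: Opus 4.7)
The plan is to first derive an expression for the time derivative of the evolving projection. Since $\PP_\tN(s)$ is orthogonal onto $\mathrm{span}\{\delta z_1(s),\ldots,\delta z_\tN(s)\}$ and each spanning vector evolves by $\partial_s\delta z_j=-\AA(s)\delta z_j$, differentiating the identity $\PP_\tN\delta z_j=\delta z_j$ gives $(\partial_s\PP_\tN)\delta z_j=-(I-\PP_\tN)\AA\delta z_j$. Combined with $\PP_\tN^2=\PP_\tN$, $\PP_\tN^*=\PP_\tN$, and the induced identity $(I-\PP_\tN)(\partial_s\PP_\tN)(I-\PP_\tN)=\PP_\tN(\partial_s\PP_\tN)\PP_\tN=0$, this yields
\begin{align*}
\partial_s\PP_\tN=-(I-\PP_\tN)\AA\PP_\tN-\PP_\tN\AA^*(I-\PP_\tN).
\end{align*}
Substituting into $\Tr[\PP_\tN\bar\BB_1\partial_s\PP_\tN]$ and using cyclicity \eqref{eq:commute}, the second piece vanishes because the factor $(I-\PP_\tN)\PP_\tN=0$ appears after a cyclic rotation. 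Hence $\Tr[\PP_\tN\bar\BB_1\partial_s\PP_\tN]=-\Tr[\PP_\tN\bar\BB_1(I-\PP_\tN)\AA\PP_\tN]$.

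\textbf{Step 2 (split the trace into four pieces).} Decomposing $\AA=-\De+\AA_0+\bar\BB+\til\BB$ via \eqref{eq:A:decomposition}, the trace above splits as a sum of four contributions corresponding to $-\De$, $\AA_0$, $\bar\BB$, $\til\BB$. Each will produce, respectively, the fourth, first, third, and second bounds on the right-hand side of \eqref{E:tracebound234:final}. A useful structural observation is that $\bar\BB_1^*=-\bar\BB_1$, which follows from the unitarity of $e^{sL_\veps}$ together with the real-valuedness of $\partial_y\bar\om$; this allows us to move $\bar\BB_1$ to the right side of inner products freely. In each of the four terms we work with an orthonormal basis $\{\phi_j(s)\}$ of $\PP_\tN L^2$, introduce $\wtil\phi_j=e^{-sL_\veps}\phi_j$, and rewrite the relevant trace as an integral against $\partial_y\bar\om$ (coming out of $\bar\BB_1$) times two factors built from $\wtil\phi_j$ and the particular piece of $\AA$.

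\textbf{Step 3 (individual estimates).} For the $\AA_0$ piece, use $\AA_0\phi=-e^{sL_\veps}(u\cdot\nabla e^{-sL_\veps}\phi)$; apply H\"older with exponents $(2,4,4)$ on the $\partial_y\bar\om\cdot(\cdot)\cdot(\cdot)$ integral, interpolate both $L^4$ factors via the 2D Gagliardo--Nirenberg inequality $\|f\|_{L^4}^2\lesssim\|f\|_{L^2}\|\nabla f\|_{L^2}$, and use $\sum_j\|\nabla\phi_j\|_{L^2}^2=\Tr[-\De\PP_\tN]$ together with $\tN\lesssim\Tr[-\De\PP_\tN]^{1/2}$ from \eqref{eq:trace:phi}. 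This produces the first bound. For the $\bar\BB$ piece, pull an $\|\partial_y\bar\om\|_{L^\infty}$ out of $\bar\BB$ as well (since $\bar\BB\phi$ involves $\nabla\bar\om$), and estimate the resulting double sum by Cauchy--Schwarz, giving the third bound with the cleanest exponent $\Tr[-\De\PP_\tN]^1$. For the $\til\BB$ piece, repeat the proof of \cref{lem:tracebound1:final} verbatim with one additional $\|\partial_y\bar\om\|_{L^\infty}$ factor pulled outside, yielding the second bound (including the $(1+\log\Tr)^{1/2}$ arising from \cref{lem:DG1991}). Finally, for the $-\De$ piece, integrate by parts: write $(\De\phi_j,(I-\PP_\tN)\bar\BB_1\phi_j)=(\De\phi_j,\bar\BB_1\phi_j)-(\De\phi_j,\PP_\tN\bar\BB_1\phi_j)$, distribute $\nabla$ via Leibniz across $\partial_y\bar\om\cdot(\cdot)$ in the first term, and use that $\bar\om$ depends only on $y$ so $\nabla(\partial_y\bar\om)=(0,\partial_{yy}\bar\om)$. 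This generates the $\|\partial_{yy}\bar\om\|_{L^2}$ factor in the fourth bound, with the remaining exponent structure matching the $\AA_0$ case.

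\textbf{Step 4 (time averaging and the expected difficulty).} Taking the time average $\lb\cdot\rb$ of each of the four pointwise bounds, pulling the $\sup_s$ factors outside, applying Cauchy--Schwarz in time on the remaining products, and invoking Jensen for the $\sqrt{y}(1+\log y)^{1/2}$ factor in the $\til\BB$ piece (as in the proof of \cref{lem:tracebound1:final}) delivers \eqref{E:tracebound234:final}. The main obstacle I anticipate is the $-\De$ piece: because $(I-\PP_\tN)$ does not commute with $\nabla$, the integration by parts must be split into a ``free'' part and a $\PP_\tN$-correction part, and one must then verify that both parts admit the same $\Tr^{7/8}$ bound. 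Correctly matching the interpolation exponents so that $\|\partial_y\bar\om\|_{L^2}^{3/4}$ appears under the sup and $\|\partial_y\bar\om\|_{L^2}^2$ under a $\lb\cdot\rb^{1/8}$ (and similarly for $\partial_{yy}\bar\om$) requires invoking Gagliardo--Nirenberg symmetrically on the two $L^4$ factors that emerge, and is where most of the bookkeeping resides.
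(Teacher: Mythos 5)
Your overall architecture matches the paper's: you derive $(\partial_s\PP_\tN)\PP_\tN=-(\mathbb{I}-\PP_\tN)\AA\PP_\tN$ (this is \cref{lem:dsPN}, and your fuller formula with the adjoint is \cref{rmk:dsPN:general}), kill the adjoint piece by cyclicity, decompose $\AA$ via \eqref{eq:A:decomposition}, estimate term by term, and time-average with Cauchy--Schwarz and Jensen. The gaps are in the term-by-term estimates. For the $\AA_0$ piece, your H\"older $(2,4,4)$ split followed by 2D Gagliardo--Nirenberg on \emph{both} $L^4$ factors does not go through: one of those factors is $e^{-sL_\veps}\QQ_\tN(s)e^{sL_\veps}\big(u\cdot\nabla e^{-sL_\veps}\phi_j\big)$, and the interpolation $\|f\|_{L^4}^2\lesssim\|f\|_{L^2}\|\nabla f\|_{L^2}$ applied to it would require control of $\nabla\big(u\cdot\nabla e^{-sL_\veps}\phi_j\big)$ in $L^2$ --- hence second derivatives of $\phi_j$ --- as well as a gradient bound through $\QQ_\tN$, which is only an $L^2$-orthogonal projection and does not commute with $\nabla$. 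None of this is controlled by $\Tr[-\De\PP_\tN]$. The paper's estimate \eqref{E:tracebound3} circumvents this by exploiting that $\partial_y\bar{\omega}$ depends on $y$ alone: H\"older is taken in the $y$-variable only, as $L^2_y\times L^2_y\times L^\infty_y$ for each fixed $x$, the 1D Agmon inequality handles the harmless factor $e^{-sL_\veps}\phi_j$, and the $\QQ_\tN$-bearing factor is measured only in $L^2$, where $\|\QQ_\tN\|\leq1$ suffices. This anisotropic splitting is precisely what produces the exponent $\Tr^{3/4}\cdot \tN^{1/4}\leq\Tr^{7/8}$ with the prefactor $\|u\|_{L^\infty}\|\partial_y\bar{\omega}\|_{L^2}$.

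For the $-\De$ piece you correctly identify the non-commutativity of $\mathbb{I}-\PP_\tN$ with $\nabla$ as the obstacle, but you do not supply the resolution, and your expectation that both halves obey a $\Tr^{7/8}$ bound is off. The paper writes $\Tr[\bar{\BB}_1\QQ_\tN(-\De)\PP_\tN]$ as $\Tr[\bar{\BB}_1(-\De)\PP_\tN]$ minus a correction $\Tr[\bar{\BB}_1\PP_\tN(-\De)\PP_\tN]$; the first half is integrated by parts (yielding the $\|\partial_y\bar{\omega}\|_{L^\infty}\Tr$ and $\|\partial_{yy}\bar{\omega}\|_{L^2}\Tr^{7/8}$ contributions via the same anisotropic Agmon device), while the correction is bounded by $|\Tr[ST]|\leq\|S\|_{op}\Tr[T]$ for the \emph{positive} trace-class operator $T=\PP_\tN(-\De)\PP_\tN$, giving $\|\partial_y\bar{\omega}\|_{L^\infty}\Tr[-\De\PP_\tN]$ --- the third term of \eqref{E:tracebound234:final}, at exponent $1$, not $7/8$. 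A minor structural difference: the paper keeps $\bar{\BB}+\til{\BB}$ together as a single term and estimates it as in \cref{lem:tracebound1:final}, which is why the logarithmic term carries $\lbn\Sob{\nabla\omega}{L^2}^2\rbn^{1/2}$ for the full vorticity; your further splitting is workable but reshuffles which stated bound arises from which piece of $\AA$.
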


To prove \cref{lem:tracebound234:final}, we will make crucial use of the following lemma. 

\begin{Lem}\label{lem:dsPN}
Let $\{\AA(s)\}_{s>0}$ be a family of linear operators such that the following non-autonomous Cauchy problem is well-posed in $L^2$:
    \begin{align}\label{eq:linearization}
         \frac{d z}{d s}=-\AA(s) z,\quad z(0)=z^0.
    \end{align}
Given $z_j^0\in L^2$, for $j=1,\dots, \tN$, let $z_j(s;0)$ denote the unique solution of \eqref{eq:linearization} for $s>0$, such that $z_j(0;0)=z_j^0$. Let $\PP_\tN(s)L^2=\vspan\{z_1(s),\dots, z_{\tN}(s)\}$. Then
for any $h\in L^2$
    \begin{align}\label{eq:dsPNh}
        \frac{d\PP_\tN(s)}{ds} \PP_\tN(s)h=-(\mathbb{I}-\PP_\tN(s))\mathbb{A}(s)\PP_\tN(s)h,
    \end{align}
\end{Lem}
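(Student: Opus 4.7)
\textbf{Proof proposal for \cref{lem:dsPN}.} The plan is to exploit the defining property that $\PP_\tN(s)$ is the orthogonal projection onto $V(s):=\vspan\{z_1(s),\dots,z_\tN(s)\}$, so that $\PP_\tN(s)h$ is automatically a linear combination of the $z_j(s)$. First I would fix $s$ and write
\[
    \PP_\tN(s)h = \sum_{j=1}^\tN c_j\, z_j(s),
\]
for suitable scalars $c_j = c_j(s,h)$ (if necessary after passing to a linearly independent subset of the $z_j(s)$, which does not alter $V(s)$). Next I would introduce the auxiliary curve
\[
    \til{z}(\tau) := \sum_{j=1}^\tN c_j\, z_j(\tau),
\]
whose coefficients are \emph{frozen} at the values determined at time $s$, so that $\til z(\tau)\in V(\tau)$ for every $\tau$ and $\til z(s) = \PP_\tN(s)h$.

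By construction one has $\PP_\tN(\tau)\til z(\tau) = \til z(\tau)$ identically in $\tau$. Differentiating this relation at $\tau=s$ gives
\[
    \bigl(\partial_\tau \PP_\tN(\tau)\bigr)\big|_{\tau=s} \til z(s) + \PP_\tN(s)\,\til z'(s) = \til z'(s),
\]
so that
\[
    \bigl(\partial_s \PP_\tN(s)\bigr)\PP_\tN(s)h = (\mathbb{I}-\PP_\tN(s))\,\til z'(s).
\]
Now I would use the evolution equation \eqref{eq:linearization} to replace $z_j'(s)$ by $-\AA(s)z_j(s)$, which yields
\[
    \til z'(s) = \sum_{j=1}^\tN c_j\, z_j'(s) = -\AA(s)\sum_{j=1}^\tN c_j\, z_j(s) = -\AA(s)\PP_\tN(s)h,
\]
and substituting this back produces exactly \eqref{eq:dsPNh}. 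As a consistency check, the well-known identity $\PP_\tN(\partial_s\PP_\tN)\PP_\tN = 0$ (obtained by differentiating $\PP_\tN^2=\PP_\tN$ and sandwiching with $\PP_\tN$) is manifestly respected by the right-hand side, since $\PP_\tN(\mathbb{I}-\PP_\tN)=0$.

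The principal obstacle I foresee is not algebraic but rather a regularity issue: one must justify that $s\mapsto \PP_\tN(s)$ is differentiable as an operator on $L^2$, and that the coefficients $c_j$ can be selected in a way compatible with the differentiation at $\tau=s$ even when the $z_j(s)$ exhibit transient linear dependence. Both points can be handled by choosing a smoothly evolving orthonormal basis of $V(s)$ via Gram--Schmidt applied to $\{z_j(s)\}$ (smooth wherever a full-rank $\tN$-dimensional family persists, which is generic under the well-posedness of \eqref{eq:linearization}), and by working pointwise in $s$ thereafter. Once these technicalities are in hand, the identity \eqref{eq:dsPNh} follows immediately from the three-line computation above.
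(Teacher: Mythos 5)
Your proposal is correct and follows essentially the same route as the paper: both arguments differentiate the identity $\PP_\tN(\tau)\til z(\tau)=\til z(\tau)$ along a curve lying in the evolving subspace and then substitute the evolution equation $z_j'=-\AA(s)z_j$; the paper simply performs this directly for $h=z_j$ and invokes linearity, while you make the reduction explicit via the frozen-coefficient curve. The extra remarks on differentiability of $s\mapsto\PP_\tN(s)$ and transient linear dependence are reasonable caveats that the paper leaves implicit.
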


\begin{proof}[Proof of \cref{lem:dsPN}]
By hypothesis, note that it suffices to establish \eqref{eq:dsPNh} for $h=z_j$, for all $j=1,\dots, N$. First, observe that
    \[
        \PP_\tN(s)\ z_j(s)= z_j(s),\quad  1\le j\le \tN.
    \]
Then upon differentiating both sides with respect to $s$ and using \eqref{eq:NSE:rotatingvf:lin:z}, we obtain
    \begin{align*}
        &\frac{d\PP_\tN(s)}{ds} z_j(s)+\PP_\tN(s)\frac{dz_j}{ds}(s)=\frac{dz_j}{ds}(s)=-\AA(s) z_j(s).
    \end{align*}
Applying \eqref{eq:linearization} once again, it follows that
    \[
        \frac{d\PP_\tN(s)}{ds}z_j(s)=-(\mathbb{I}-\PP_\tN(s))\AA(s) z_j(s),
    \]
as desired.
\end{proof}
\begin{Rmk}\label{rmk:dsPN:general}
    Using a similar argument as in the proof of \cref{lem:dsPN}, it follows more generally that for any $h\in L^2$,
     \begin{align*}
        \frac{d\PP_\tN(s)}{ds} h=-(\mathbb{I}-\PP_\tN(s))\mathbb{A}(s)\PP_\tN(s)h-\PP_\tN(s)\mathbb{A}^{*}(s)(\mathbb{I}-\PP_\tN(s))h,
    \end{align*}
    where $\mathbb{A}^{*}(s)$ denotes the adjoint operator of $\mathbb{A}(s)$.
    \end{Rmk}

Let us now prove \cref{lem:tracebound234:final}.

\begin{proof}[Proof of \cref{lem:tracebound234:final}]
Denote by
\begin{align}\label{def:QN}
\QQ_{\tN}=\mathbb{I}-\PP_{\tN}.
\end{align}
Using \eqref{eq:commute}, we may then apply \cref{lem:dsPN} and \eqref{eq:A:decomposition} to obtain
\begin{align}\label{E:tracesplit1}
    &\Tr[\PP_\tN(s)\overline{\BB}_1(s)(\partial_s\PP_\tN)(s)]=\Tr[\overline{\BB}_1(s)(\partial_s\PP_\tN)(s)\PP_\tN(s)]\notag
    \\
    &=-\Tr[\overline{\BB}_1(s)\QQ_{\tN}(s)\AA(s)\PP_{\tN}(s)]\notag
    \\
    &=\Tr[\overline{\BB}_1(s)\QQ_{\tN}(s)(-\Delta)\PP_{\tN}(s)]+\Tr[\overline{\BB}_1(s)\QQ_{\tN}(s)\AA_0(s)\PP_{\tN}(s)]+\Tr[\overline{\BB}_1(s)\QQ_{\tN}(s)\BB(s)\PP_{\tN}(s)]\notag
    \\
    &=I+II+III.
\end{align}

By definition of \eqref{def:QN}, $I$ can be decomposed as
    \begin{align}\label{eq:main:term:PNB1dsPN}
        I=\Tr[\overline{\BB}_1(s)(- \Delta)\PP_{\tN}(s)]+\Tr[\overline{\BB}_1(s)\PP_\tN(s)(- \Delta)\PP_{\tN}(s)]=I_a+I_b.
    \end{align}
Integrating by parts, we have
    \begin{align*}
        |I_a|&=\left|\sum_{j=1}^{\tN}\frac i2\left(e^{sL_\veps}\left(\partial_y \overline{\omega}e^{-sL_\veps}(-\Delta)\phi_j(s)\right),\phi_j(s)\right)\right|\notag
        \\
        &\le  \sum_{j=1}^{\tN}\left|\left(\partial_y \overline{\omega}e^{-sL_\veps}\nabla\phi_j(s)\cdot e^{-sL_\veps} \nabla \phi_j(s)\right)\right|+\sum_{j=1}^{\tN}\left|\left(\partial_{yy} \overline{\omega}e^{-sL_\veps}\partial_y \phi_j(s),e^{-sL_\veps}\phi_j(s)\right)\right|.\notag
    \end{align*}
We have
    \[
        \sum_{j=1}^{\tN}\left|\left(\partial_y \overline{\omega}e^{-sL_\veps}\nabla\phi_j(s)\cdot e^{-sL_\veps} \nabla \phi_j(s)\right)\right|\le \Sob{\partial_y \overline{\omega}}{L^\infty}\left(\sum_{j=1}^{\tN} \Sob{\nabla \phi_j(s)}{L^2}^2\right)\le C\Sob{\partial_y \overline{\omega}}{L^\infty}\Tr[- \Delta \PP_\tN(s)].
    \]
By H\"older's inequality and Agmon's inequality in one-dimension, we have
    \begin{align}\label{E:tracebound2}
        \sum_{j=1}^{\tN}\left|\left(\partial_{yy} \overline{\omega}e^{-sL_\veps}\partial_y \phi_j(s),e^{-sL_\veps}\phi_j(s)\right)\right|&\le \Sob{\partial_{yy} \overline{\omega}}{L^2_y}\int \left(\sum_{j=1}^{\tN} \Sob{e^{-sL_\veps}\partial_y\phi_j(s)}{L^2_y}\Sob{e^{-sL_\veps}\phi_j(s)}{L^\infty_y}\right)\,dx\notag\\
        &\le C\Sob{\partial_{yy} \overline{\omega}}{L^2_y}\int \left(\sum_{j=1}^{\tN} \Sob{e^{-sL_\veps}\partial_y\phi_j(s)}{L^2_y}^{\frac{3}{2}}\Sob{e^{-sL_\veps}\phi_j(s)}{L^2_y}^{\frac{1}{2}}\right)\,dx\notag\\
        &\le C\Sob{\partial_{yy} \overline{\omega}}{L^2_y}\left(\sum_{j=1}^{\tN} \Sob{\nabla\phi_j(s)}{L^2}^{\frac{3}{2}}\Sob{\phi_j(s)}{L^2}^{\frac{1}{2}}\right)\notag\\
        &\le C\Sob{\partial_{yy} \overline{\omega}}{L^2_y} \left(\sum_{j=1}^{\tN} \Sob{\nabla\phi_j(s)}{L^2}^2\right)^{\frac{3}{4}}\left(\sum_{j=1}^{\tN} \Sob{\phi_j(s)}{L^2}^2\right)^{\frac{1}{4}}\notag \\
        &\le C\Sob{\partial_{yy} \overline{\omega}}{L^2}{\Tr[- \Delta \PP_\tN(s)]}^{\frac{7}{8}},
    \end{align}
where we have applied \eqref{E:A0} and \eqref{eq:trace:phi} to obtain the final inequality. Thus, we have
    \begin{align}\label{E:tracebound2a}
        |I_a|
        \le C\Sob{\partial_y \overline{\omega}}{L^\infty}\Tr[- \Delta \PP_\tN(s)]+C\Sob{\partial_{yy} \overline{\omega}}{L^2}{\Tr[- \Delta \PP_\tN(s)]}^{\frac{7}{8}}.
    \end{align}
For $I_b$, we note that $\PP_\tN(s)(- \Delta)\PP_{\tN}(s)$ is a positive operator. Applying \cite[{Page 267}]{ConwayBook}, we have
    \begin{align}\label{E:tracebound2b}
        |I_b|
        &\le \Sob{\overline{\BB}_1(s)}{op}\Tr[\PP_\tN(s)(- \Delta)\PP_{\tN}(s)]\le \Sob{\partial_y \overline{\omega}}{L^\infty}\Tr[-\Delta \PP_\tN(s)].
    \end{align}
Returning to \eqref{eq:main:term:PNB1dsPN}, we now see from \eqref{E:tracebound2a} and \eqref{E:tracebound2b}, that upon taking time average, and applying H\"older's inequality, we obtain
    \begin{align}\label{E:tracebound2:final}
        &\left\langle \Tr[\overline{\BB}_1\QQ_{\tN}(- \Delta)\PP_{\tN}]\right\rangle\notag 
        \\
        &\ge -C\left(\sup_{s}\Sob{\partial_y \overline{\omega}(s)}{L^\infty}\right) \lbn \Tr[-\Delta \PP_\tN] \rbn-C\left(\sup_{s}\Sob{\partial_{yy} \overline{\omega}(s)}{L^2}^{\frac{3}{4}}\right) \lbn \Sob{\partial_{yy} \overline{\omega}}{L^2}^{\frac{1}{4}}\Tr[-\Delta \PP_\tN]^{\frac{7}{8}} \rbn \notag\\
        &
        \ge -C\left(\sup_{s}\Sob{\partial_y \overline{\omega}(s)}{L^\infty}\right) \lbn \Tr[-\Delta \PP_\tN] \rbn-C\left (\sup_{s}\Sob{\partial_{yy} \overline{\omega}(s)}{L^2}^{\frac{3}{4}}\right)  \lbn \Sob{\partial_{yy} \overline{\omega}}{L^2}^2\rbn ^{\frac{1}{8}}\lbn \Tr[-\Delta \PP_\tN] \rbn^{\frac{7}{8}}.
    \end{align}
    
Similarly, for term $II$ in \eqref{E:tracesplit1}, we have
    \begin{align}\label{E:tracebound3}
        |II|&=\left|\Tr[\overline{\BB}_1(s)\QQ_{\tN}(s)\AA_0(s)\PP_{\tN}(s)]\right|\notag
        \\
        &=\left|\sum_{j=1}^{\tN}\frac i2\left(e^{sL_{\veps}}\partial_y \overline{\omega}e^{-sL_\veps}\QQ_{\tN}(s)B_\veps(t)(\eta(s),\phi_j(s)),\phi_j(s)\right)\right|\notag
        \\
        &=\left|\sum_{j=1}^{\tN}\frac i2\left(e^{-sL_\veps}\QQ_{\tN}(s)e^{sL_{\veps}}(u(s)\cdot \nabla e^{-sL_\veps}\phi_j(s)),\partial_y \overline{\omega}e^{-sL_\veps}\phi_j(s)\right)\right|\notag
        \\
        &\le \Sob{\partial_{y} \overline{\omega}}{L^2_y}\int \left(\sum_{j=1}^{\tN} \Sob{e^{-sL_\veps}\QQ_{\tN}(s)e^{sL_{\veps}}(u(s)\cdot \nabla e^{-sL_\veps}\phi_j(s))}{L^2_y}\Sob{e^{-sL_\veps}\phi_j(s)}{L^\infty_y}\right)\,dx\notag
        \\
        &\le C\Sob{\partial_{y} \overline{\omega}}{L^2_y}\int \left(\sum_{j=1}^{\tN}\Sob{e^{-sL_\veps}\QQ_{\tN}(s)e^{sL_{\veps}}(u(s)\cdot \nabla e^{-sL_\veps}\phi_j(s))}{L^2_y} \Sob{e^{-sL_\veps}\partial_y\phi_j(s)}{L^2_y}^{\frac{1}{2}}\Sob{e^{-sL_\veps}\phi_j(s)}{L^2_y}^{\frac{1}{2}}\right)\,dx\notag
        \\
        &\le C\Sob{\partial_{y} \overline{\omega}}{L^2} \Sob{u}{L^\infty}\left(\sum_{j=1}^{\tN} \Sob{\nabla\phi_j(s)}{L^2}^2\right)^{\frac{3}{4}}\left(\sum_{j=1}^{\tN} \Sob{\phi_j(s)}{L^2}^2\right)^{\frac{1}{4}}\notag 
        \\
        &\le C \Sob{\partial_{y} \overline{\omega}}{L^2} \Sob{u}{L^\infty}{\Tr[- \Delta \PP_\tN(s)]}^{\frac{7}{8}}.
    \end{align}
Upon taking time averages, then applying H\"older's inequality, we obtain
    \begin{align}\label{E:tracebound3:final}
        \lbn \Tr[\overline{\BB}_1\QQ_{\tN}\AA_0\PP_{\tN}]  \rbn 
         \geq-C\left(\sup_{s}\Sob{u(s)}{L^\infty}\Sob{\partial_{y} \overline{\omega}(s)}{L^2}^{\frac{3}{4}}\right) \lbn \Sob{\partial_{y} \overline{\omega}}{L^2}^2 \rbn^{\frac{1}{8}}\lbn \Tr[-\Delta \PP_\tN] \rbn^{\frac{7}{8}}.
    \end{align}
    
For the third and last term, $III$, in \eqref{E:tracesplit1}, we have
    \begin{align}\label{E:tracebound4}
        |III|
        &=\left|\sum_{j=1}^{\tN}\frac i2\left(e^{sL_{\veps}}\partial_y \overline{\omega}e^{-sL_\veps}\QQ_{\tN}(s)B_\veps(t)(\phi_j(s),\eta(s)),\phi_j(s)\right)\right|\notag
        \\
        &=\left|\sum_{j=1}^{\tN}\frac i2\left(e^{-sL_\veps}\QQ_{\tN}(s)e^{sL_{\veps}}B( e^{-sL_\veps}\phi_j(s),\omega),\partial_y \overline{\omega}e^{-sL_\veps}\phi_j(s)\right)\right|\notag
        \\
        &\le \sum_{j=1}^{\tN}\Sob{e^{-sL_\veps}\QQ_{\tN}(s)e^{sL_{\veps}}B( e^{-sL_\veps}\phi_j(s),\omega)}{L^2}\Sob{\partial_y \overline{\omega}e^{-sL_\veps}\phi_j(s)}{L^2}.
    \end{align}
We estimate this term in the manner of \eqref{E:CS1}-\eqref{E:basis-Linfinity} to obtain
\begin{align}\label{E:tracebound4:final}
    &\langle\Tr[\overline{\BB}_1(s)\QQ_{\tN}(s)\BB(s)\PP_{\tN}(s)]\rangle \notag\\
    &\quad\ge -C\left(\sup_{s}\Sob{\partial_y \overline{\omega}}{L^\infty}\right)\left\langle \Sob{\nabla {\omega}}{L^2}^2 \right\rangle^{1/2} \left \langle\Tr[-\Delta \PP_\tN(s)]\right\rangle^{1/4}\left(1+\log\left \langle\Tr[- \Delta \PP_\tN(s)]\right\rangle\right)^{1/2}.
\end{align}
Collecting the estimates \eqref{E:tracebound2:final}, \eqref{E:tracebound3:final}, and \eqref{E:tracebound4:final}, yields \eqref{E:tracebound234:final}, as desired.
\end{proof}

We are left to treat one last term: $\lbn \Tr[\PP_\tN(s)\overline{\BB}_{2}(s)\PP_\tN(s)]\rbn$.

\subsubsection{Treating $\lbn \Tr[\PP_\tN(s)\overline{\BB}_{2}(s)\PP_\tN(s)]\rbn$}

We prove the following:

\begin{Lem}\label{lem:tracebound5:final}
    \begin{align}\label{E:tracebound5:final}
        \lbn \Tr[\PP_{\tN}(s)\overline{\BB}_{2}(s)\PP_{\tN}(s)]  \rbn
        &\ge -C\left (\sup_{s}\Sob{\partial_{yy} \overline{\omega}}{L^2}^{\frac{3}{4}}\right)  \lbn \Sob{\partial_{yy} \overline{\omega}}{L^2}^2\rbn ^{\frac{1}{8}}\lbn \Tr[-\Delta \PP_\tN] \rbn^{\frac{7}{8}}-C \Sob{\overline{f}}{L^2}\lbn \Tr[-\Delta \PP_\tN] \rbn^{\frac{7}{8}}\notag
        \\
        &\quad-C\left(\sup_{s}\Sob{u}{L^\infty}\Sob{\partial_{y} \overline{\omega}}{L^2}^{\frac{3}{4}}\right) \lbn \Sob{\partial_{y} \overline{\omega}}{L^2}^2 \rbn^{\frac{1}{8}}\lbn \Tr[-\Delta \PP_\tN] \rbn^{\frac{7}{8}}.
    \end{align}
\end{Lem}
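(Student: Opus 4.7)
The plan is to eliminate the time derivative $\partial_s\partial_y\bar{\omega}$ by invoking the evolution equation satisfied by the zonal vorticity $\bar{\omega}$, reducing the estimate to three contributions that are each amenable to the Agmon-Rayleigh-Ritz scheme already used in the proof of \cref{lem:tracebound234:final}. Taking the zonal average of \eqref{eq:NSE:rotatingv} and noting that $\overline{u^2}\equiv 0$ -- a consequence of the zero-mean condition, the symmetry \eqref{eq:symmetry}, and incompressibility -- the Coriolis term drops out and one obtains
    \begin{align*}
        \partial_s\bar{\omega} = \bar{f} + \partial_{yy}\bar{\omega} - \overline{B(\omega,\omega)}.
    \end{align*}
Differentiating this identity in $y$ furnishes an expression for $\partial_s\partial_y\bar{\omega}$ in terms of $\partial_y\bar{f}$, $\partial_{yyy}\bar{\omega}$, and $\partial_y\overline{B(\omega,\omega)}$, each of which is time-independent in its $s$-dependence once we substitute this back into the trace.

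Next, I would insert this substitution into $\Tr[\PP_\tN\bar{\BB}_2\PP_\tN]=\frac{i}{2}\sum_j (\partial_s\partial_y\bar{\omega},|e^{-sL_\veps}\phi_j|^2)$, integrate by parts in $y$ (justified by periodicity) to transfer one derivative from each factor onto $|e^{-sL_\veps}\phi_j|^2$, and thereby split the trace into three pieces corresponding to $\bar{f}$, $\partial_{yy}\bar{\omega}$, and $\overline{B(\omega,\omega)}$, respectively. The first two pieces are estimated verbatim following the one-dimensional Agmon-in-$y$ and Rayleigh-Ritz chain already recorded in the display \eqref{E:tracebound2}, producing pointwise-in-time bounds of the form $\|\bar{f}\|_{L^2}\Tr[-\De\PP_\tN(s)]^{7/8}$ and $\|\partial_{yy}\bar{\omega}\|_{L^2}\Tr[-\De\PP_\tN(s)]^{7/8}$.

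The third piece -- the nonlinear contribution from $\overline{B(\omega,\omega)}=\overline{u\cdot\nabla\omega}$ -- is the main obstacle and must be handled so that only a single power of $\partial_y\bar{\omega}$ survives in $L^2$, rather than $\|\nabla\omega\|_{L^2}$. The key is to rearrange one factor as $u$ in $L^\infty$ and pair the remaining derivative against $\partial_y\bar{\omega}$, mirroring the treatment of term $II$ in \eqref{E:tracebound3}. Using the identity $\overline{u\cdot\nabla\omega}=\partial_y\overline{u^2\omega}$ (which follows from incompressibility and $x_1$-periodicity) together with $\overline{u^2}\equiv 0$, one can arrange factors so that after integration by parts and an application of the one-dimensional Agmon inequality -- exactly as in the passage from \eqref{E:tracebound3} -- the resulting bound takes the form $\|u\|_{L^\infty}\|\partial_y\bar{\omega}\|_{L^2}\Tr[-\De\PP_\tN(s)]^{7/8}$.

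Finally, I would take the long-time average of each of the three pointwise estimates and split each product by H\"older's inequality in the unbalanced form
    \begin{align*}
        \lb X \cdot Y \cdot Z^{7/8}\rb \le \Bigl(\sup_s X Y^{3/4}\Bigr)\lb Y^2\rb^{1/8}\lb Z\rb^{7/8},
    \end{align*}
which is the same device used between \eqref{E:tracebound3} and \eqref{E:tracebound3:final}. Assembling the three resulting contributions produces precisely \eqref{E:tracebound5:final}. The most delicate point throughout is the algebraic manipulation in the nonlinear term, which is where the structural identities for $\overline{B(\omega,\omega)}$ on the zonal mode -- in particular the vanishing of $\overline{u^2}$ -- become essential.
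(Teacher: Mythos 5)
Your overall strategy coincides with the paper's: substitute the zonal vorticity equation $\partial_s\bar{\omega}=-\bar{B}(\omega,\omega)+\partial_{yy}\bar{\omega}+\bar{f}$ into the trace, integrate by parts in $y$ so that only one derivative lands on each pair $\phi_j\phi_j$, split into the three pieces, run the 1D Agmon/Rayleigh--Ritz chain of \eqref{E:tracebound2} on each, and close with the unbalanced H\"older/Jensen step. The $\bar{f}$ and $\partial_{yy}\bar{\omega}$ pieces and the final averaging are handled exactly as in the paper and are fine.

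The gap is in the nonlinear piece. You assert that after writing $\overline{u\cdot\nabla\omega}=\partial_y\overline{u^2\omega}$ one can ``arrange factors'' to obtain a bound of the form $\Sob{u}{L^\infty}\Sob{\partial_y\bar{\omega}}{L^2}\Tr[-\Delta\PP_\tN(s)]^{7/8}$, but no such arrangement is available. Since $\overline{u^2}\equiv 0$, one has $\overline{u^2\omega}=\overline{\tilde{u}^2\tilde{\omega}}$ and, equivalently, $\bar{B}(\omega,\omega)=\overline{\tilde{u}\cdot\nabla\tilde{\omega}}$: the zonal average of the nonlinearity is a purely \emph{non-zonal} quadratic quantity with no a priori control by $\partial_y\bar{\omega}$, so the factor $\Sob{\partial_y\bar{\omega}}{L^2}$ cannot appear from this term. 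Moreover, the additional integration by parts you would need (to move the outer $\partial_y$ in $\partial_y\overline{u^2\omega}$ off the nonlinearity) places two derivatives on a single $\phi_j$, producing $\sum_j\Sob{\partial_{yy}\phi_j}{L^2}\cdots$, which is not controlled by $\Tr[-\Delta\PP_\tN]$ and breaks the Rayleigh--Ritz structure. The paper does not attempt this: it simply bounds $\Sob{\bar{B}(\omega,\omega)}{L^2_y}\le C\Sob{u}{L^\infty}\Sob{\nabla\omega}{L^2}$ and the third term of its proof (and of the summary estimate \eqref{est:tracesummary}) carries $\Sob{\nabla\omega}{L^2}$, not $\Sob{\partial_y\bar{\omega}}{L^2}$; the $\partial_y\bar{\omega}$ appearing in the printed statement of \eqref{E:tracebound5:final} is inconsistent with the proof and with how the lemma is used downstream. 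You should therefore prove (and state) the third term with $\Sob{\nabla\omega}{L^2}$, which your own Agmon/H\"older machinery delivers directly without the divergence-form rewriting.
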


\begin{proof}
Integrating by parts, we have
    \begin{align}\label{E:tracebound5}
        \Tr[\PP_\tN(s)\overline{\BB}_{2}(s)\PP_{\tN}(s)]&=\sum_{j=1}^{\tN}\frac i2\left(e^{sL_{\veps}}\partial_s\partial_y \overline{\omega}e^{-sL_\veps}\phi_j(s),\phi_j(s)\right)\notag
        \\
        &=\sum_{j=1}^{\tN}\frac i2\left(\partial_{s}\overline{\omega},\partial_y e^{-sL_\veps}\phi_j(s) e^{-sL_\veps}\phi_j(s)\right).
    \end{align}

Now from \eqref{eq:NSE:rotatingvf}, we have
    \begin{align*}
        \partial_s \overline{\om}=-\overline{B}({\om},{\om})+\partial_{yy} \overline{\om}+\overline{f}.
    \end{align*}
Using \eqref{E:tracebound5} and estimating similar to \eqref{E:tracebound2}, we obtain
    \begin{align*}
        &\left|\Tr[\PP_\tN(s)\overline{\BB}_{2}(s)\PP_{\tN}(s)]\right|\notag
        \\
        &\le C(\Sob{u}{L^\infty} \Sob{\nabla\om}{L^2}+\Sob{\partial_{yy}\overline{\om}}{L^2}+\Sob{\overline{f}}{L^2})\left(\sum_{j=1}^{\tN} \Sob{\nabla\phi_j(s)}{L^2}^2\right)^{\frac{3}{4}}\left(\sum_{j=1}^{\tN} \Sob{\phi_j(s)}{L^2}^2\right)^{\frac{1}{4}}.
    \end{align*}
Upon taking time average, and applying Holder and Jensen's inequality, we obtain
    \begin{align}
        \lbn \Tr[\PP_{\tN}(s)\overline{\BB}_{2}(s)\PP_{\tN}(s)]  \rbn \ge& -C\left (\sup_{s}\Sob{\partial_{yy} \overline{\omega}(s)}{L^2}^{\frac{3}{4}}\right)  \lbn \Sob{\partial_{yy} \overline{\omega}}{L^2}^2\rbn ^{\frac{1}{8}}\lbn \Tr[-\Delta \PP_\tN] \rbn^{\frac{7}{8}}-C \Sob{\overline{f}}{L^2}\lbn \Tr[-\Delta \PP_\tN] \rbn^{\frac{7}{8}}\notag 
        \\&-C\left(\sup_{s}\Sob{u(s)}{L^\infty}\Sob{\nabla \omega(s)}{L^2}^{\frac{3}{4}}\right) \lbn \Sob{\nabla\omega}{L^2}^2 \rbn^{\frac{1}{8}}\lbn \Tr[-\Delta \PP_\tN] \rbn^{\frac{7}{8}},\notag
    \end{align}
which is precisely \eqref{E:tracebound5:final}, and we are done.
\end{proof}

\subsection{Culmination of the Estimates} 

We are finally in a position to prove \cref{thm:main}. Let us first summarize the estimates above. Denote 
    \[
        \mathscr{N}^2= \lbn \Tr[-\Delta \PP_\tN] \rbn\ge C\tN^2.
    \]
Upon combining the results \cref{lem:tracebound1:final}, \cref{lem:Bbar:decomposition}, 
\cref{lem:tracebound234:final}, and \cref{lem:tracebound5:final}, we arrive at
    \begin{align}\label{est:tracesummary}
        \left\langle \Tr[\AA\PP_\tN]\right \rangle  &\ge \mathscr{N}^2-C\left\langle \Sob{\nabla \widetilde{\omega}}{L^2}^2 \right\rangle^{\frac{1}{2}}\mathscr{N}^{\frac{1}{2}}\left(1+\log \mathscr{N}\right)^{\frac{1}{2}}\notag
        \\
        &\quad-C\varepsilon\left(\sup_{s}\Sob{u(s)}{L^\infty}\Sob{\nabla \omega(s)}{L^2}^{\frac{3}{4}}\right) \lbn \Sob{\nabla \omega}{L^2}^2 \rbn^{\frac{1}{8}}\mathscr{N}^{\frac{7}{4}}\notag 
        \\
        &\quad-C\varepsilon\left(\sup_{s}\Sob{\partial_y \overline{\omega}}{L^\infty}\right)\left\langle \Sob{\nabla {\omega}}{L^2}^2 \right\rangle^{\frac{1}{2}} \mathscr{N}^{\frac{1}{2}}\left(1+\log\mathscr{N}\right)^{\frac{1}{2}}-C\varepsilon\left(\sup_{s}\Sob{\partial_y \overline{\omega}(s)}{L^\infty}\right)\mathscr{N}^2\notag
        \\
        &\quad-C\varepsilon\left (\sup_{s}\Sob{\partial_{yy} \overline{\omega}(s)}{L^2}^{\frac{3}{4}}\right)  \lbn \Sob{\partial_{yy} \overline{\omega}}{L^2}^2\rbn ^{\frac{1}{8}}\mathscr{N}^{\frac{7}{4}}-C \varepsilon\Sob{\overline{f}}{L^2}\mathscr{N}^{\frac{7}{4}}
    \end{align}
To estimate $\lbn \Sob{\nabla \widetilde{\om}}{L^2}^2\rbn$, we replicate the analysis as given in \cite{MustafaDjoko2011} but for a time independent forcing $f$. We state the estimate below and postpone its proof until \cref{sect:app:A}.

\begin{Lem}\label{lem:om:timeavg} Let $\om$ be a solution of \eqref{eq:NSE:rotatingv} on $\Acal^\veps$ with non-zonal component denoted by $\widetilde{\om}$. Then
    \begin{align*}
        \sup_{s}\Sob{\widetilde{\om}(s)}{L^2}+\lbn \Sob{\nabla\widetilde{\om}}{L^2}^2\rbn^{\frac{1}{2}} \le C{\varepsilon}^{\frac{1}{2}} \Gr^{\frac{9}{4}}(1+\log \Gr)^{\frac{1}{4}},
    \end{align*}
for some constant $C$ independent of $\veps$.
\end{Lem}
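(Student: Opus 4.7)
The plan is to project the vorticity equation onto the non-zonal modes, pass to the rotating-frame variable $\til\eta = e^{tL_\veps}\til\om$, and then use integration by parts in time (the same mechanism deployed in the proof of \cref{lem:Bbar:decomposition}) to extract a factor of $\veps$ from the quadratic nonlinear contribution. The resulting differential inequality for $\Sob{\til\om}{L^2}^2 = \Sob{\til\eta}{L^2}^2$ will carry a right-hand side of size $O(\veps)$ up to polynomial factors in $\Gr$, and integrating it in time while keeping the dissipation $\Sob{\nabla\til\om}{L^2}^2$ on the left will yield the pointwise-in-time and the time-averaged bounds simultaneously.

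First, I would project \eqref{eq:NSE:rotatingvf} onto non-zonal modes, using \eqref{eq:zonal:cancellation} to eliminate $B(\bar\om,\bar\om)$, and then apply $e^{tL_\veps}$ to arrive at the standard energy identity
\[
\tfrac{1}{2}\tfrac{d}{dt}\Sob{\til\eta}{L^2}^2 + \Sob{\nabla\til\eta}{L^2}^2 = -\bigl(\widetilde{B_\veps(t)(\eta,\eta)},\til\eta\bigr) + \bigl(\til g_\veps,\til\eta\bigr).
\]
For the nonlinear term I would mimic the Fourier-side manoeuvre carried out in \eqref{E:IBP}: since the output mode $l$ is non-zonal, the triad phase $\lam_j+\lam_k-\lam_l$ is generically nonzero, so the oscillation $e^{-it(\lam_j+\lam_k-\lam_l)/\veps}$ can be traded, via integration by parts in time, for a factor of $\veps$. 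The $\bdy_t$ that lands on $\eta$ is reinserted through \eqref{eq:NSE:rotatingvf:eta}, producing a dissipation term, a cubic-in-$\eta$ term, and a forcing contribution, each multiplied by $\veps$.

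Next, I would bound the ensuing quadratic-in-$\om$ quantities using the uniform Sobolev estimates \eqref{est:om:sobolev:Gr}, which give $\sup_t\Sob{\om(t)}{H^m}\lesssim \Gr^{m+1}$ for all $m$. The worst contribution after IBP involves $\Sob{B(\om,\om)}{L^2}\lesssim \Sob{u}{L^\infty}\Sob{\nabla\om}{L^2}$, where Brezis--Gallouet is invoked to interpolate $\Sob{u}{L^\infty}$ between $\Sob{\om}{L^2}$ and $\Sob{\om}{H^2}$; this is the origin of the $(1+\log\Gr)^{1/2}$ factor. Combining the resulting bounds with $\Sob{\om}{H^2}\lesssim \Gr^3$ delivers a right-hand side of order $\veps\,\Gr^{9/2}(1+\log\Gr)^{1/2}$. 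Integrating the differential inequality over $[t,t+T]$ and dividing by $T$ then extracts both $\sup_s\Sob{\til\om(s)}{L^2}^2\lesssim \veps\,\Gr^{9/2}(1+\log\Gr)^{1/2}$ and $\lbn \Sob{\nabla\til\om}{L^2}^2\rbn\lesssim \veps\,\Gr^{9/2}(1+\log\Gr)^{1/2}$, and taking square roots yields the claim.

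The main obstacle will be the careful accounting after the time-IBP. One must verify a $\veps$-independent lower bound on the non-resonant denominator $|\lam_j+\lam_k-\lam_l|$ for the triads contributing to non-zonal output, and the reinserted $\bdy_t\eta$-term produces a \emph{cubic} quantity $\veps\bigl(\widetilde{B_\veps(t)(\eta,\eta)},\cdot\bigr)$ that must either be absorbed into the dissipation or dominated by a higher Sobolev norm of $\om$. Tracking the powers of $\Gr$ through all of these estimates to land precisely at $\Gr^{9/4}(1+\log\Gr)^{1/4}$ is the delicate point and, by design, it matches the borderline scaling of the single-point regime in \cref{prop:onepoint}.
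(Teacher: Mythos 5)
Your overall strategy---an $L^2$ energy identity for the non-zonal vorticity plus differentiation by parts in time to extract a factor of $\veps$ from the nonlinearity---is the same as the paper's. But there is a genuine gap in how you set up the time-IBP, and you have flagged it yourself without realizing it is unfillable as stated. You propose to integrate by parts against the phase $e^{-it(\lam_j+\lam_k-\lam_l)/\veps}$ with \emph{non-zonal} output $l$, and note that one "must verify a $\veps$-independent lower bound on the non-resonant denominator." No such bound exists: $\lam_j+\lam_k-\lam_l$ accumulates at zero over non-resonant triads, and moreover the genuinely resonant Rossby triads ($\lam_j+\lam_k=\lam_l$, $l_1\neq0$) contribute $O(1)$ terms from which no $\veps$ can be extracted. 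The paper's proof never faces this problem because it first reduces the nonlinear term algebraically: by \eqref{E:Bdiv} and \eqref{eq:zonal:cancellation}, $(B(\om,\om),\til\om)=-(B(\til\om,\til\om),\bar\om)$, so the surviving interaction has \emph{zonal} output ($l_1=0$, $\lam_l=0$). For these triads the symmetrized coefficient is $\Gamma_{jkl}+\Gamma_{kjl}=-4\pi^2 l_2(\lam_j+\lam_k)\delta_{j+k-l}$, which carries an explicit factor of $(\lam_j+\lam_k)$ that exactly cancels the denominator produced by writing $e^{-i(\lam_j+\lam_k)s/\veps}$ as $\veps$ times a time derivative; on the resonant set $\lam_j+\lam_k=0$ the coefficient itself vanishes. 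This cancellation is the whole point of the argument (it is the same mechanism as in \cref{lem:Bbar:decomposition}), and without it your plan does not close.

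Two further points you would need to address. First, the direct forcing term $(f,\til\om)$ in your energy identity is \emph{not} small if estimated by Cauchy--Schwarz (it gives an $O(\Gr^2)$ contribution, not $O(\veps)$); the paper also integrates it by parts in time, which costs dividing by $\lam_k$ and hence requires two extra derivatives of $f$ (the quantity $\check f$ with $\Sob{\check f}{L^2}\lesssim\Sob{f}{H^2}$), together with the symmetry \eqref{eq:symmetry} to kill $(\check f,\til f)$. Second, to reach the exponent $\Gr^{9/2}$ inside the time average you cannot use only the uniform bounds \eqref{est:om:sobolev:Gr} (which would give roughly $\Gr^{13/2}$); the paper first proves the time-averaged enstrophy bounds $\lbn\Sob{\nabla\om}{L^2}^2\rbn^{1/2}\le\Gr$ and $\lbn\Sob{\De\om}{L^2}^2\rbn^{1/2}\lesssim\Gr^2(1+\log\Gr)^{1/2}$ and uses the sup-in-time bounds only on the lower-order prefactors.
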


We are now ready to prove the main theorem, \cref{thm:main}.

\subsection*{Proof of \cref{thm:main}}

By \eqref{est:om:sobolev:Gr}, we have
    \[
        \sup_{s}\Sob{\nabla \om(s)}{L^2}\le C \Gr^{2}, \quad \sup_{s}\Sob{\Delta\om(s)}{L^2}\le C \Gr^{3}.
    \]
On the other hand, by \eqref{est:time:om} and \eqref{est:time:om:Lap}, we have
    \[
        \lbn \Sob{\nabla \om}{L^2}^2\rbn^{\frac{1}{2}}\le \Gr,\quad  \lbn \Sob{\Delta \om}{L^2}^2\rbn^{\frac{1}{2}}\le C\Gr^2 .
    \]
For estimating $\Sob{\partial_y \overline{\omega}}{L^\infty}$, we invoke the 1D Agmon inequality to get
    \begin{align*}
        \sup_{s}\Sob{\partial_y \overline{\omega}(s)}{L^\infty}\le C\sup_{s}\Sob{\overline{\omega}(s)}{H^1}^{\frac{1}{2}}\Sob{\overline{\omega}(s)}{H^2}^{\frac{1}{2}}\le C \Gr^{\frac{5}{2}},
    \end{align*}
    Similarly, by the 2D Br\'ezis-Gallouet inequality (see \cite{MustafaDjoko2011}), we have
    \begin{align*}
        \sup_{s}\Sob{u(s)}{L^\infty}\le C\sup_{s}\Sob{\om(s)}{L^2}\left[1+\log\left(\frac{\Sob{\Delta \om(s)}{L^2}}{\Sob{\nabla \om(s)}{L^2}}\right)\right]^{\frac{1}{2}}\le C \Gr (1+\log \Gr)^{\frac{1}{2}}.
    \end{align*}
Upon applying the above bounds and \cref{lem:om:timeavg} in \eqref{est:tracesummary}, we obtain
    \begin{align*}
        \left\langle \Tr[\AA\PP_\tN]\right \rangle 
        &\ge \mathscr{N}^2-C{\varepsilon}^{\frac{1}{2}} \Gr^{\frac{9}{4}}(1+\log \Gr)^{\frac{1}{4}}\mathscr{N}^{\frac{1}{2}}(1+\log\mathscr{N})^{\frac{1}{2}}\notag
        \\
        &\quad -C\veps \Gr^{\frac{11}{4}}(1+\log\Gr)^{\frac{1}{2}}\mathscr{N}^{\frac{7}{4}}\notag
        \\
        &\quad-C\varepsilon \Gr^{\frac{7}{2}}\mathscr{N}^{\frac{1}{2}}\left(1+\log\mathscr{N}\right)^{\frac{1}{2}}-C\varepsilon\Gr^{\frac{5}{2}}\mathscr{N}^2\notag
        \\
        &\quad-C\varepsilon\Gr^{\frac{11}{4}}\mathscr{N}^{\frac{7}{4}}-C\veps\Gr\mathscr{N}^{\frac{7}{4}}\notag
        \\
        &= \left(1-C\veps\Gr^{\frac{5}{2}}\right)\mathscr{N}^2\notag
        \\
        &\quad-C\veps^{\frac{1}{2}}\Gr^{\frac{9}{4}}\left[(1+\log\Gr)^{\frac{1}{4}}+\veps^{\frac{1}{2}}\Gr^{\frac{5}{4}}\right]\mathscr{N}^{\frac{1}{2}}(1+\log\mathscr{N})^{\frac{1}{2}}\notag
        \\
        &\quad-C\veps \Gr\left[\Gr^{\frac{7}{4}}(1+\log\Gr)^{\frac{1}{2}}+\Gr^{\frac{7}{4}}+1\right]\mathscr{N}^{\frac{7}{4}}.
    \end{align*}
We assume that $\varepsilon$ satisfies the following  condition
    \begin{align}\label{cond:eps:Gr}
        C\varepsilon \Gr^{5/2}<\frac{1}2, 
    \end{align}
Using this, along with the fact that $\Gr\geq1$, we obtain
    \begin{align}\label{est:trace:final}
        \left\langle \Tr[\AA\PP_\tN]\right \rangle  \ge &\frac{1}2 \mathscr{N}^2-C{\varepsilon}^{\frac{1}{2}}\Gr^{\frac{9}{4}}(1+\log\Gr)^{\frac{1}{4}}\mathscr{N}^{\frac{1}{2}}\left(1+\log \mathscr{N}\right)^{\frac{1}{2}}-C\varepsilon\Gr^{\frac{11}{4}}(1+\log\Gr)^{\frac{1}{2}}\mathscr{N}^{\frac{7}{4}}
    \end{align}

Now, to obtain an upper bound estimate on the smallest value of $\tN$ such that $\left\langle \Tr[\AA\PP_\tN]\right \rangle >0$, we find the cross-over points of the right-hand side of \eqref{est:trace:final} as a function of $\mathscr{N}$, that is, the points $\mathscr{N}$ such that
    \[  
        \mathscr{N}^2 \sim \varepsilon \Gr^{\frac{11}{4}}(1+\log\Gr)^{\frac{1}{2}}\mathscr{N}^{\frac{7}{4}}\quad\text{and}\quad \mathscr{N}^2 \sim {\varepsilon}^{\frac{1}{2}}\Gr^{\frac{9}{4}}(1+\log\Gr)^{\frac{1}{4}}\mathscr{N}^{\frac{1}{2}}\left(1+\log \mathscr{N}\right)^{\frac{1}{2}}.
    \]
This yields
    \[  
        \mathscr{N} \sim \varepsilon^4 \Gr^{11} (1+\log\Gr)^{2}\quad\text{and}\quad \mathscr{N} \sim \varepsilon^{\frac{1}{3}}\Gr^{\frac{3}{2}}(1+\log\Gr)^{\frac{1}{2}}.
    \]
Thus, we have
    \begin{align*}
          \mathscr{N}&\sim \max \left\{\varepsilon^4 \Gr^{11} (1+\log\Gr)^{2}, \varepsilon^{1/3}\Gr^{3/2}(1+\log\Gr)^{1/2}\right\},
    \end{align*}
Observe that 
\[\varepsilon^4 \Gr^{11} (1+\log\Gr)^{2}<\varepsilon^{1/3}\Gr^{3/2}(1+\log\Gr)^{1/2}\quad \text{if}\quad \varepsilon<\Gr^{-57/22}(1+\log \Gr)^{-9/22}.\]
Upon applying \eqref{E:A0}, we finally obtain
    \begin{align}\notag
        \tN\leq\mathscr{N}\lesssim 
        \begin{cases}
           \varepsilon^{1/3}\Gr^{3/2}(1+\log\Gr)^{1/2} \quad &\text{if}\quad \varepsilon \lesssim\Gr^{-57/22}(1+\log \Gr)^{-9/22}\\
          \varepsilon^4 \Gr^{11} (1+\log\Gr)^{2}   \quad &\text{if}\quad \Gr^{-57/22}(1+\log \Gr)^{-9/22} \lesssim \veps \lesssim \Gr^{-5/2},
        \end{cases},
    \end{align}
as claimed.
\hfill \qed

\begin{Rmk}\label{rmk:smallness}
We point out that the smallness condition \eqref{cond:eps:Gr} is determined by the need to control $\sup_s\|\partial_y\overline{\omega}(s)\|_{L^\infty}$. This appears to be unavoidable in our approach. In terms of Grashof number, this term is bounded by $\Gr^{5/2}$.

On the other hand, observe that 
    \[\varepsilon^4 \Gr^{11} (1+\log\Gr)^{2}< \Gr^{2/3}(1+\log \Gr)^{1/3}\quad \text{if}\quad \veps<\Gr^{-31/12}(1+\log \Gr)^{-5/12}.\]    
In particular, the number $31/12$ is the exponent that determines the cut-off when our attractor dimension estimates match the classical dimension estimate of Constantin-Foias-Temam (see \eqref{cond:regime:2}). Notice that $5/2<31/12$. In principle, it is possible that one can improve upon the bottleneck exponent of $5/2$, in which case the exponent $31/12$ may change.
\end{Rmk}

\section{A description of the asymptotic behavior of $\om^\veps$}\label{sect:asymptotic}

In the final section, we formalize a heuristic of Al-Jaboori \& Wirosoetisno \cite{MustafaDjoko2011} for understanding the asymptotic behavior of solutions to 2D rotating NSE as $\veps\goesto0$, and provide an explicit decomposition of solutions to the 2D rotating NSE that encodes this information. In particular, we prove \cref{thm:main} and \cref{thm:continuity}.

Observe that we may decompose \eqref{eq:NSE:rotatingv} into its corresponding to non-zonal and zonal components to obtain the coupled system
    \begin{align*}
        \frac{\partial \overline{\omega}}{\partial t}+\overline{B}(\omega, \omega)&=\overline{f}+\nu \Delta \overline{\omega}
        \\
        \frac{\partial \widetilde{\omega}}{\partial t}+\widetilde{B}(\omega, \omega)&=\widetilde{f}+\nu \Delta \widetilde{\omega}-L_\veps\widetilde{\omega}.
    \end{align*}

It is shown in \cite[Theorem 3.1]{MustafaDjoko2011} that for all $m=0, 1,2, \dots$, there exists $T_m$, independent of $\veps$, such that
    \begin{align}\label{est:nonzonal:bounds}
        \sup_{t\geq T_m}\Sob{\widetilde{\om}(t)}{H^m}=O(\sqrt{\veps}).
    \end{align}
It follows that
    \[
        \overline{B}(\omega, \omega)=\overline{B}(\widetilde{\omega}, \widetilde{\omega})+\overline{B}(\widetilde{\omega}, \overline{\omega})+\overline{B}(\overline{\omega}, \widetilde{\omega})\xrightarrow{L^2} 0 \quad \text{as}\quad \varepsilon \rightarrow 0. 
    \]
This observation leads us to prove the following.

\begin{Lem}\label{lem:limiting:system}
For each $\veps>0$, there exists $T_*(\veps)>0$ such that
    \begin{align}\label{est:omeps:ombar}
        \sup_{t\geq T_*(\veps)}\|\om^\veps(t)-\overline{\om}(t)\|_{L^2}\leq O(\sqrt{\veps})
    \end{align}
where $\overline{\om}$ satisfies
    \begin{align}\label{E:heat:limit}
        \frac{\partial \overline{\omega}}{\partial t}-\nu \Delta \overline{\omega}=\overline{f}.
    \end{align}
In particular   
    \begin{align}\notag
        \lim_{\veps\goesto0^+}\sup_{t\geq T_1'(\veps)}\|\om^\veps(t)-\overline{\om}(t)\|_{L^2}=0.
    \end{align}
\end{Lem}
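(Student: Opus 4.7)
The natural plan is to decompose $\om^\veps - \bar\om$ into its zonal and non-zonal pieces and estimate each separately. Writing $\bar\om^\veps$ and $\til\om^\veps$ for the zonal and non-zonal parts of $\om^\veps$, I would set $\bar\ze^\veps := \bar\om^\veps - \bar\om$ so that
\[
\om^\veps(t) - \bar\om(t) = \til\om^\veps(t) + \bar\ze^\veps(t).
\]
The non-zonal piece is already under control: by \eqref{est:nonzonal:bounds} with $m=0$, $\sup_{t\ge T_0}\|\til\om^\veps(t)\|_{L^2} \lesssim \sqrt{\veps}$. The task therefore reduces to establishing $\|\bar\ze^\veps(t)\|_{L^2} \lesssim \veps$ for all sufficiently large $t$, which in fact gives a better $O(\veps)$ error on the zonal component.

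Next I would derive the evolution equation satisfied by $\bar\ze^\veps$. Applying the zonal projection to \eqref{eq:NSE:rotatingvf} eliminates the Coriolis term (since $L_\veps$ produces an $x$-derivative, whose zonal average vanishes) and yields
\[
\partial_t \bar\om^\veps - \Delta \bar\om^\veps = \bar f - \bar B(\om^\veps,\om^\veps).
\]
Subtracting \eqref{E:heat:limit} gives $\partial_t \bar\ze^\veps - \Delta \bar\ze^\veps = -\bar B(\om^\veps,\om^\veps)$. I would then collapse this forcing to $\bar B(\til\om^\veps,\til\om^\veps)$ using \eqref{eq:zonal:cancellation} together with the observation that the cross terms $\overline{B(\bar\om^\veps,\til\om^\veps)}$ and $\overline{B(\til\om^\veps,\bar\om^\veps)}$ both vanish: a zonal stream function produces a purely horizontal velocity (killing the first cross term since $\nabla\til\om^\veps$ then pairs only with $\partial_x$), and the zonal average of $\partial_x(-\Delta)^{-1}\til\om^\veps$ is zero (killing the second). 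H\"older's inequality combined with the 2D Sobolev embedding $\|\til u^\veps\|_{L^\infty}\lesssim \|\til\om^\veps\|_{H^1}$ and \eqref{est:nonzonal:bounds} with $m=1$ then yields
\[
\|\bar B(\til\om^\veps,\til\om^\veps)\|_{L^2} \le \|\til u^\veps\|_{L^\infty}\|\nabla\til\om^\veps\|_{L^2} \lesssim \|\til\om^\veps\|_{H^1}^2 \lesssim \veps,\qquad t\ge T_1.
\]

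Finally, I would perform a standard $L^2$ energy estimate on the equation for $\bar\ze^\veps$. Since $\bar\ze^\veps$ is mean-free, the one-dimensional Poincar\'e inequality gives $\|\partial_y\bar\ze^\veps\|_{L^2}^2 \ge c\|\bar\ze^\veps\|_{L^2}^2$ for some absolute constant $c>0$, leading to the differential inequality
\[
\tfrac{d}{dt}\|\bar\ze^\veps\|_{L^2} + c\|\bar\ze^\veps\|_{L^2} \le C\veps, \qquad t\ge T_1.
\]
Gr\"onwall's inequality then produces $\limsup_{t\to\infty}\|\bar\ze^\veps(t)\|_{L^2} \lesssim \veps$, and taking $T_*(\veps)$ beyond the transient decay time of the initial error (which proceeds at the $\veps$-independent rate $c$) completes the proof. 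The only slightly delicate step is the verification that $\bar B(\om^\veps,\om^\veps) = \bar B(\til\om^\veps,\til\om^\veps)$; once that cancellation is in hand, the remainder is a routine dissipative energy argument, and the final statement $\lim_{\veps\to 0^+}\sup_{t\ge T_*(\veps)}\|\om^\veps-\bar\om\|_{L^2} = 0$ is immediate from the combined bound $\sqrt{\veps} + \veps \to 0$.
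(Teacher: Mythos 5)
Your proposal is correct and follows essentially the same route as the paper: the same decomposition $\om^\veps-\bar{\om}=\til{\om}^\veps+\bar{\ze}^\veps$, the same forced heat equation $\bdy_t\bar{\ze}^\veps-\De\bar{\ze}^\veps=-\bar{B}(\om^\veps,\om^\veps)$, the $O(\sqrt{\veps})$ control of $\til{\om}^\veps$ from \eqref{est:nonzonal:bounds}, and a dissipative energy/Gr\"onwall estimate giving $\|\bar{\ze}^\veps\|_{L^2}=O(\veps)$ after a transient. The only (immaterial) difference is that you bound the forcing pointwise in time via the identity $\bar{B}(\om^\veps,\om^\veps)=\bar{B}(\til{\om}^\veps,\til{\om}^\veps)$ and $\|\til{u}^\veps\|_{L^\infty}\|\nabla\til{\om}^\veps\|_{L^2}\lesssim\|\til{\om}^\veps\|_{H^1}^2$, whereas the paper tests against $\bar{\ze}^\veps$ and moves a derivative onto it before interpolating; both yield the same $O(\|\til{\om}^\veps\|_{H^1}^2)$ control.
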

\begin{proof}
Let $\overline{\ze}^\veps:=\overline{\om}^\veps-\overline{\om}$. Then
    \begin{align}\notag
        \frac{\bdy\overline{\ze}^\veps}{\bdy t}-\De\overline{\ze}^\veps=-\overline{B}(\om^\veps,\om^\veps).
    \end{align}
Observe that
    \begin{align}\label{est:zetabar:predecomp}
        \overline{B}(\om^\veps,\om^\veps)=\overline{B}(\overline{\om}^\veps,\widetilde{\om}^\veps)+\overline{B}(\widetilde{\om}^\veps,\overline{\om}^\veps)+\overline{B}(\widetilde{\om}^\veps,\widetilde{\om}^\veps)=\overline{B}(\widetilde{\om}^\veps,\widetilde{\om}^\veps).
    \end{align}
Then
    \begin{align}
        |(\overline{B}(\widetilde{\om}^\veps,\widetilde{\om}^\veps),\overline{\ze}^\veps)|&=|({B}(\widetilde{\om}^\veps,\overline{\ze}^\veps),\widetilde{\om}^\veps)|\notag
        \\
        &\leq C\|(-\De)^{-1/2}\widetilde{\om}^\veps\|_{L^4}\|\widetilde{\om}\|_{L^4}\|\nabla\overline{\ze}^\veps\|_{L^2}\notag
        \\
        &\leq C\|\widetilde{\om}^\veps\|_{L^2}^{1/2}\|(-\De)^{-1/2}\widetilde{\om}^\veps\|_{L^2}^{1/2}\|\nabla\widetilde{\om}^\veps\|^{1/2}\|\widetilde{\om}^\veps\|_{L^2}^{1/2}\|\nabla\overline{\ze}^\veps\|_{L^2}\notag
        \\
        &\leq C\|\widetilde{\om}^\veps\|_{H^1}^2\|\nabla\overline{\ze}^\veps\|_{L^2}\notag
        \\
        &\leq C\|\widetilde{\om}^\veps\|_{H^1}^4+\frac{1}2\|\nabla\overline{\ze}^\veps\|_{L^2}^2.\notag
    \end{align}
Then    
    \begin{align}
        \frac{d}{dt}\|\overline{\ze}^\veps\|_{L^2}^2+\|\nabla\overline{\ze}^\veps\|_{L^2}^2\leq C\|\widetilde{\om}^\veps\|_{H^1}^4\notag
    \end{align}
By the Gr\"onwall inequality, it follows that
    \begin{align}
        \|\overline{\ze}^\veps(t)\|_{L^2}^2&\leq e^{-(t-T_1)}\|\overline{\ze}^\veps(T_1)\|_{L^2}^2+\int_{T_1}^te^{-(t-s)}\|\widetilde{\om}^\veps(s)\|_{H^1}^4ds\notag
        \\
        &\leq e^{-(t-T_1)}\|\overline{\ze}^\veps(T_1)\|_{L^2}^2+C\sup_{t\geq T_1}\|\widetilde{\om}^\veps(t)\|_{H^1}^4,\label{est:zetabar:bound}
    \end{align}
holds for all $t\geq T_1$, independent of $\veps$, for all $\veps$. 

Since $T_1$ is independent of $\veps$, we may pass to the limit $\veps\goesto0$. Then
    \begin{align}\notag
        \limsup_{\veps\goesto0} \|\overline{\ze}^\veps(t)\|_{L^2}^2&\leq e^{-(t-T_1)}\limsup_{\veps\goesto0}\|\overline{\ze}^\veps(T_1)\|_{L^2}^2,
    \end{align}
holds for all $t\geq T_1$. Finally, since 
    \begin{align}\notag
        \limsup_{\veps\goesto0}\|\overline{\ze}^\veps(T_1)\|_{L^2}^2\leq \limsup_{\veps\goesto0}\|\overline{\om}^\veps(T_1)\|_{L^2}^2+\|\overline{\om}(T_1)\|_{L^2}^2\leq O(1),
    \end{align}
it follows that
    \begin{align}\label{est:zetabar:zero}
         \lim_{t\goesto\infty}\limsup_{\veps\goesto0} \|\overline{\ze}^\veps(t)\|_{L^2}^2&=0.
    \end{align}
Upon combining \eqref{est:nonzonal:bounds}, \eqref{est:zetabar:bound}, and \eqref{est:zetabar:zero}, we deduce that for each $\veps$, there exists $T_*(\veps)$ such that
    \begin{align}\notag
        \sup_{t\geq T_*(\veps)}\|\overline{\om}^\veps(t)-\overline{\om}(t)\|_{L^2}\leq O(\veps).
    \end{align}
In conjunction with \eqref{est:nonzonal:bounds}, we may now conclude \eqref{est:omeps:ombar}.
\end{proof}

Observe that the above result implies that we can define the limit as of $\om^\veps$ as $\veps\goesto0$ as $\om^0_*:=\overline{\om}_*$. However, it is not clear what the relation of $\overline{\om}_*$ is to the dynamics of the limiting infinite-rotation system. It is possible that the limiting system possesses a non-trivial global attractor, and that $\overline{\om}_*$ represents only one of its steady states. This interesting issue will be reserved for a future study. We nevertheless can make a simple observation. For $\varepsilon$ sufficiently small, let $\om^\varepsilon_*$ be the unique steady state for \eqref{eq:NSE:rotatingv}, and let $\om^0_*$ be the unique steady state of \eqref{E:heat:limit}. We then prove the convergence result for attractors $\Acal_\varepsilon$ as $\varepsilon \rightarrow 0$ as stated below.

\begin{Lem}\label{lem:attractors:cty}
Let $\om^0_*:=\overline{\om}_*$. Then
    \[
        \om^\varepsilon_* \rightarrow \om^0_*\quad\text{in}\ H^1\quad \text{as}\quad \varepsilon \rightarrow 0^+.
    \]
In particular $\lim_{\veps\goesto0^+}\dist(\Acal^\veps,\overline{\Acal})=0$, where $\overline{\Acal}$ denotes the global attractor of \eqref{eq:heat}.
\end{Lem}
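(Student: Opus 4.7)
\textbf{Proof plan for \cref{lem:attractors:cty}.} The strategy is to reduce the continuity of attractors to the convergence of the stationary solutions by invoking \cref{prop:onepoint}. Indeed, for $\veps$ satisfying \eqref{cond:regime:MD}, $\Acal_\veps$ collapses to the single fixed point $\om^\veps_*$, while $\bar{\Acal}$ is the single fixed point $\bar{\om}_*$ of the linear heat equation \eqref{eq:heat} (the unique globally attracting steady state). Thus $\dist(\Acal_\veps, \bar{\Acal}) = \|\om^\veps_* - \bar{\om}_*\|_{H^1}$, and the lemma reduces to establishing $H^1$ convergence of the stationary vorticities.

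To carry this out, I would first decompose $\om^\veps_* = \bar{\om}^\veps_* + \til{\om}^\veps_*$ into its zonal and non-zonal components. Since $\om^\veps_*$ lies on the attractor, the uniform bound \eqref{est:nonzonal:bounds} from \cite{MustafaDjoko2011} applied to the constant-in-time orbit at $\om^\veps_*$ yields $\|\til{\om}^\veps_*\|_{H^m} = O(\sqrt{\veps})$ for every $m\ge 0$. Next, I would write down the stationary zonal and full equations: taking the zonal projection of the vorticity equation \eqref{eq:NSE:rotatingvf} at equilibrium and using that the zonal average of $L_\veps\om^\veps_* = -\veps^{-1}\partial_1(-\Delta)^{-1}\om^\veps_*$ vanishes identically, one obtains
\begin{align}\notag
    -\Delta \bar{\om}^\veps_* = \bar{f} - \bar{B}(\om^\veps_*, \om^\veps_*),
\end{align}
to be compared with the stationary heat equation $-\Delta \bar{\om}_* = \bar{f}$.

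The key estimate is then on the nonlinearity. Since $\bar{B}(\bar{\om}, \bar{\om})=0$ by \eqref{eq:zonal:cancellation}, we have
\begin{align}\notag
    \bar{B}(\om^\veps_*, \om^\veps_*) = \bar{B}(\til{\om}^\veps_*, \til{\om}^\veps_*) + \bar{B}(\til{\om}^\veps_*, \bar{\om}^\veps_*) + \bar{B}(\bar{\om}^\veps_*, \til{\om}^\veps_*),
\end{align}
so that every term involves at least one factor of $\til{\om}^\veps_*$. Standard product estimates together with the uniform Sobolev bound \eqref{est:om:sobolev:Gr} for $\om^\veps_*$ then give $\|\bar{B}(\om^\veps_*, \om^\veps_*)\|_{L^2} \le C \|\til{\om}^\veps_*\|_{H^1}\|\om^\veps_*\|_{H^1} = O(\sqrt{\veps})$. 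Subtracting the two stationary equations, elliptic regularity on the periodic strip yields
\begin{align}\notag
    \|\bar{\om}^\veps_* - \bar{\om}_*\|_{H^2} \le C\|\bar{B}(\om^\veps_*,\om^\veps_*)\|_{L^2} = O(\sqrt{\veps}),
\end{align}
and combining with the non-zonal bound produces $\|\om^\veps_* - \bar{\om}_*\|_{H^1} = O(\sqrt{\veps}) \to 0$, as required.

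The only mildly delicate point is checking that the stationary attractor $\bar{\Acal}$ of the linear heat equation \eqref{eq:heat} really is the singleton $\{\bar{\om}_*\}$, which follows from the fact that \eqref{eq:heat} with time-independent mean-zero forcing $\bar{f}$ has a unique stationary solution $\bar{\om}_*$ towards which every solution converges exponentially in any Sobolev norm. No step is genuinely hard: the main content has already been absorbed into \cref{prop:onepoint}, \cref{thm:djoko:bounds}, and \cref{lem:limiting:system}; what remains is just to package these together via the stationary equation.
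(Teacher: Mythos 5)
Your proposal is correct, but it reaches the conclusion by a somewhat different route than the paper. Both arguments reduce the lemma, via \cref{prop:onepoint}, to showing $\om^\veps_*\to\bar{\om}_*$ for the (eventually unique) steady states, and both ultimately rest on the same input, namely $\Sob{\til{\om}^\veps_*}{H^m}=O(\sqrt{\veps})$ from \eqref{est:nonzonal:bounds}. The paper, however, does not project onto zonal modes: it sets $\ze^\veps=\om^\veps_*-\bar{\om}_*$, subtracts the two \emph{full} stationary equations, and pairs the resulting identity with $\ze^\veps$ in $L^2$; the cancellations \eqref{E:Bdiv}, \eqref{eq:skew}, and the zonality of $\bar{\om}_*$ kill every term except $\nu\Sob{\nabla\ze^\veps}{L^2}^2=(B(\til{\om}^\veps_*,\bar{\om}_*),\til{\om}^\veps_*)$, which is $O(\veps)$ --- a one-line energy identity. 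You instead take the zonal projection of the stationary vorticity equation, treat $\bar{B}(\om^\veps_*,\om^\veps_*)$ as a small forcing (correctly observing that every surviving term carries a factor of $\til{\om}^\veps_*$; in fact only $\bar{B}(\til{\om}^\veps_*,\til{\om}^\veps_*)$ is nonzero after zonal averaging, so the forcing is even $O(\veps)$ in $L^2$), and invoke one-dimensional elliptic regularity. Your version costs a product estimate on $\bar{B}$ (harmless here, since \eqref{est:om:sobolev:Gr} gives uniform bounds in every Sobolev norm) but buys an explicit $O(\sqrt{\veps})$ rate and $H^2$ control of the zonal difference, whereas the paper's energy argument is shorter and avoids elliptic theory altogether. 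Your side remarks --- that $\bar{\Acal}$ is the singleton $\{\bar{\om}_*\}$ and that \eqref{est:nonzonal:bounds} applies to the constant orbit at $\om^\veps_*$ --- are both needed and correctly justified.
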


\begin{proof}
Let $\ze^\veps=\om^\veps_*-\om^0_*$. Then $\ze^\veps$ satisfies 
            \begin{align*}
                B(\ze^\veps,\ze^\veps)+B(\om^0_*, \ze^\veps)+B(\ze^\veps, \om^0_*)=\nu\De \ze^\veps+L_\veps\ze^\veps.
            \end{align*}
Upon taking inner product in $L^2$ with $\ze^\veps$, we obtain
        \begin{align*}
            \nu\|\nabla \ze^\veps\|_{L^2}^2=-\lb B(\widetilde{\ze}^\veps, \om^0_*),\widetilde{\ze}^\veps \rb=\lb B(\widetilde{\om}^\veps_{*},\om^0_*),\widetilde{\om}^\veps_*\rb \le C \Sob{\nabla^{-1}\widetilde{\om}^\veps_{*}} {L^\infty}\Sob{\nabla \om^0_*}{L^2}\Sob{\widetilde{\om}^\veps_*}{L^2}\rightarrow 0
        \end{align*}
    as $\varepsilon \rightarrow 0$.
\end{proof}

Finally, we prove \cref{thm:decomposition}, which is essentially just a collective summary of the results \cref{lem:limiting:system}, \cref{lem:attractors:cty}. 

\begin{proof}[Proof of \cref{thm:decomposition}]
By \cref{lem:limiting:system} it follows that the solution of rotating NSE can be decomposed as
    \begin{align}\notag
        \om^\veps(t)
        =\overline{\om}(t)+\widetilde{\om}^\veps(t)+\overline{\ze}^\veps(t)\goesto \overline{\om}_*+\widetilde{\om}^\veps_*+O(\veps)\quad\text{as}\ t\goesto\infty,
    \end{align}
whenever $\veps$ is sufficiently small. Hence
    \begin{align}\notag
        \lim_{\veps\goesto0^+}\om^\veps_*=\overline{\om}_*\quad\text{in}\ L^2,
    \end{align}
By \cref{lem:attractors:cty}, this convergence can be upgraded to $H^1$.
\end{proof}

\subsection*{Acknowledgments} The authors would like to thank the referees for their valuable suggestions to improve the manuscript. A.F. was supported in part by the National Science Foundation through DMS 2206493. V.R.M. was in part supported by the National Science Foundation through DMS 2213363 and DMS 2206491, as well as the Mary P. Dolciani Halloran Foundation.

\appendix

\section{Proof of \cref{lem:om:timeavg}}\label{sect:app:A}

\begin{proof}
The proof of \cref{lem:om:timeavg} consists of several steps. First, we estimate $\lbn \Sob{\nabla \om} {L^2}^2\rbn$ in terms of Grashof number $\Gr$, which is then used to obtain a bound for $\lbn \Sob{\Delta \om} {L^2}^2\rbn$. Then, we write the evolution equation for $\Sob{\widetilde{\omega}}{L^2}$ and decompose the nonlinear term into several terms using the product rule in time as done in \eqref{E:IBP}. Finally, we estimate all the terms using corresponding estimates for norms of $\omega$. 

For convenience, we recall definitions of $\eta, f, g$ from \cref{sec:prelim}.
\[\eta=e^{tL_\veps}\omega,\quad f=\Gr (\nabla^{\perp}\cdot F),\quad g=e^{tL_\veps}f.\]
Multiply \eqref{eq:NSE:rotatingv} by $\om$, integrate in space and take the time average to obtain
\[\lbn \Sob{\nabla{\om}}{L^2}^2\rbn=\lbn (\om,f)\rbn.\]
Note that $(L_\eps \om, \om)=0$ by \eqref{eq:skew}. Using integration by parts and applying the Cauchy-Schwarz inequality, we obtain
\[\lbn \Sob{\nabla{\om}}{L^2}^2\rbn\le \lbn \Sob{\nabla{\om}}{L^2}\rbn\Gr\Sob{F}{L^2}\le \lbn \Sob{\nabla{\om}}{L^2}^2\rbn^{1/2}\Gr.\]
Thus, we have
    \begin{align}\label{est:time:om}
        \lbn \Sob{\nabla \om}{L^2}^2\rbn^{1/2}\le \Gr.
    \end{align}
Next, to estimate $\lbn \Sob{\Delta \om}{L^2}^2\rbn$, we multiply \eqref{eq:NSE:rotatingv} by $\Delta\om$, integrate in space and take the time average to obtain
    \[
        \lbn \Sob{\Delta{\om}}{L^2}^2\rbn =-\lbn (B(\om,\om),\Delta \om)\rbn+ \lbn (\Delta \om,f)\rbn.
        \]
Integrating by parts and applying Ladyzhenskaya's inequality and the Cauchy-Schwarz inequality, we obtain
    \begin{align*}
        \lbn \Sob{\Delta{\om}}{L^2}^2\rbn& \le \lbn |(B(\nabla \om,\om),\nabla \om)|\rbn+ \lbn |(\Delta \om,f)|\rbn \\& \le C\lbn \Sob{\nabla \om}{L^4}^2 \Sob{\om}{L^2}\rbn + \lbn \Sob{\Delta{\om}}{L^2}\rbn\Sob{f}{L^2}\\ &\le C\sup_{s}\{\Sob{\om}{L^2}\}\lbn \Sob{\nabla \om}{L^2} \Sob{\Delta \om}{L^2} \rbn+\lbn \Sob{\Delta{\om}}{L^2}\rbn\Sob{f}{L^2}
        \\
        &\le C \sup _{s}{\Sob{\om}{L^2}}\lbn \Sob{\Delta{\om}}{L^2}^2\rbn^{1/2} \lbn \Sob{\nabla{\om}}{L^2}^2\rbn^{1/2}+  \lbn \Sob{\Delta{\om}}{L^2}^2\rbn^{1/2}\Sob{f}{L^2}
        \\
        &\le C( \Gr^2+\Gr)\lbn \Sob{\Delta{\om}}{L^2}^2\rbn^{1/2}.
    \end{align*}
Thus, we have
    \begin{align}\label{est:time:om:Lap}
        \lbn \Sob{\Delta \om}{L^2}^2\rbn^{1/2}\le C\Gr^2.
    \end{align}
Next, we multiply \eqref{eq:NSE:rotatingv} by $\widetilde{\om}$ and integrate in space to obtain
    \begin{align}\label{eq:NSE:rotatingv:non-zonal}
        \frac{1}{2}\frac{d}{ds} \Sob{\widetilde{\om}}{L^2}^2+\Sob{\nabla \widetilde{\om}}{L^2}^2+(B(\om,\om),\widetilde{\om})=(f,\widetilde{\om})-(L_\veps\om, \widetilde{\om}).
    \end{align}
By writing $\om=\overline{\om}+\widetilde{\om}$ and using \eqref{eq:skew}, we have 
    \[
        (L_\veps \om, \widetilde{\om})=(L_\veps\widetilde{\om}, \widetilde{\om})=0.
    \]
Similarly, we have
    \begin{align*}
        (B(\om,\om),\widetilde{\om})=-(B(\widetilde{\om},\widetilde{\om}),\overline{\om})=-(B(e^{-sL_\veps}\widetilde{\eta},e^{-sL_\veps}\widetilde{\eta}),\overline{\omega}).
    \end{align*} 
Applying the Plancherel's theorem and using the product rule in time as described in the analysis preceding \eqref{E:IBP}, we obtain
    \begin{align*}
        &(B(e^{-sL_\veps}\widetilde{\eta},e^{-sL_\veps}\widetilde{\eta}),\overline{\omega})\\&=-\frac{i\varepsilon}{2} \partial_s \left((e^{-sL_\veps}\widetilde{\eta})^2,\partial_y \overline{\omega}\right)+i\varepsilon \left((e^{-sL_\veps}\partial_s \widetilde{\eta})(e^{-sL_\veps}\widetilde{\eta}),\partial_y \overline{\omega}\right)
    +\frac{i\varepsilon}{2} \left((e^{-sL_\veps}\widetilde{\eta})^2,\partial_s \partial_y \overline{\omega}\right)
        \\
        &=I_1+I_2+I_3.
    \end{align*}

Applying the Plancherel's theorem, we have
    \begin{align*}
        (f,\widetilde{\om})&=4\pi^2\sum_{k \in \mathbb{Z}^2}\mathcal{F}_k(f)\overline{\mathcal{F}_k({\omega})}e^{-i\lambda_k s/\varepsilon}\notag
        \\
        &=i4\pi^2 \veps\sum_{k \in \mathbb{Z}^2, \lam_k \ne 0}\frac{1}{\lambda_k}\partial_s\left(\mathcal{F}_k(f)\overline{\mathcal{F}_k({\omega})}e^{-i\lambda_k s/\varepsilon}\right)-i4\pi^2 \veps\sum_{k \in \mathbb{Z}^2, \lam_k \ne 0}\frac{1}{\lambda_k}\mathcal{F}_k(f)\overline{\mathcal{F}_k({\partial_s \omega})}e^{-i\lambda_k s/\varepsilon}\notag
        \\
        &=i\veps \partial_s(\check{f},\widetilde{\omega})-i\veps (\check{f},\partial_s\widetilde{\omega})\\
        &=J_1 +J_2,
    \end{align*}
where $\check{f}$ is defined by
\begin{align*}
   \check{f}_k= \begin{cases}
        \frac{f_k}{\lam_k}\quad \text{if}\, \lam_k\ne 0,\\
        0 \quad \text{if}\, \lambda_k=0.
    \end{cases}
\end{align*}
Observe that by using uniform-in-time bounds in \eqref{est:om:sobolev:Gr}, we conclude that
    \begin{align*}
        \lbn I_1 \rbn, \lbn J_1 \rbn=0.
    \end{align*}
From \eqref{eq:NSE:rotatingvf:eta}, we have 
    \begin{align*}
        e^{-sL_\veps}\partial_s \widetilde{\eta}&=e^{-sL_\veps}(-\widetilde{B_\veps(t)}(\eta,\eta)+\widetilde{g
}+\Delta \til {\eta
})
    \\
        &=-\widetilde{B}(\om,\om)+\widetilde{f}+\Delta \widetilde{\om}.
    \end{align*}
Thus, we have
    \[
        I_2=-i\varepsilon(\widetilde{B}(\om,\om)\widetilde{\om},\partial_y \overline{\om})+i\varepsilon(\widetilde{f}\widetilde{\om},\partial_y \overline{\om})+i\varepsilon((\Delta\widetilde{\om})\widetilde{\om},\partial_y \overline{\om}).
    \]
We estimate $I_2$ by applying Holder inequality, Agmon inequality, and Br\'ezis-Gallouet inequality
    \begin{align}
        |I_2|\le & C\varepsilon \Sob{\partial_y \overline{\om}}{L^2_y}\Sob{u}{L^\infty}\int \Sob{\nabla \om}{L^2_y}\Sob{\widetilde{\om}}{L^\infty_y}\, dx+C\varepsilon \Sob{\partial_y \overline{\om}}{L^2_y} \int \Sob{\widetilde{f}}{L^2_y}\Sob{\widetilde{\om}}{L^\infty_y}\,dx \notag
        \\
        &+C\varepsilon \Sob{\partial_y \overline{\om}}{L^2_y} \int \Sob{\Delta \widetilde{\om}}{L^2_y}\Sob{\widetilde{\om}}{L^\infty_y}\,dx \notag
        \\
        \le & C\varepsilon \Sob{\partial_y \overline{\om}}{L^2}\Sob{u}{L^\infty}\int \Sob{\nabla \om}{L^2_y}^{3/2}\Sob{\om}{L^2_y}^{1/2}\, dx+C\varepsilon \Sob{\partial_y \overline{\om}}{L^2} \int \Sob{{f}}{L^2_y}\Sob{\nabla \om}{L^2_y}^{1/2}\Sob{\om}{L^2_y}^{1/2}\,dx \notag
        \\
        &+C\varepsilon \Sob{\partial_y \overline{\om}}{L^2} \int \Sob{\Delta {\om}}{L^2_y}\Sob{\nabla \om}{L^2_y}^{1/2}\Sob{\om}{L^2_y}^{1/2}\,dx \notag
        \\
        \le & C\varepsilon\left(\Sob{\om}{L^2}^{{3}/{2}}\log\left(\frac{\Sob{\Delta \om}{L^2}}{c\Sob{\nabla \om}{L^2}}+1\right)^{1/2}\Sob{ \nabla \om}{L^2}^{5/2}+\Sob{f}{L^2}\Sob{\om}{L^2}^{{1}/{2}}\Sob{\nabla \om}{L^2}^{{3}/{2}}\right. \notag
        \\ 
        &\left.\hspace{15 em}+\Sob{\Delta\om}{L^2}\Sob{\om}{L^2}^{{1}/{2}}\Sob{\nabla \om}{L^2}^{{3}/{2}}\right) \notag
        \\
        \le & C\varepsilon (\Gr^{5/2}(1+\log \Gr)^{{1}/{2}}\Sob{\nabla \om}{L^2}^2+\Gr^{{5}/{2}}\Sob{\nabla \om}{L^2}+\Gr^{3/2}\Sob{\Delta \om}{L^2}\Sob{\nabla \om}{L^2})\label{est:I2:appdx}.
    \end{align}
Taking the time average, applying the Cauchy-Schwarz inequality and using (\ref{est:time:om}-\ref{est:time:om:Lap}), we have
    \begin{align*}
        |\lbn I_2\rbn| &\le  C\varepsilon \left(\Gr^{5/2}(1+\log \Gr)^{{1}/{2}}\lbn \Sob{\nabla \om}{L^2}^2\rbn+\Gr^{{5}/{2}}\lbn \Sob{\nabla \om}{L^2}^2 \rbn^{1/2}+\Gr^{3/2} \lbn \Sob{\Delta \om}{L^2}^2 \rbn^{1/2} \lbn \Sob{\nabla \om}{L^2}^2 \rbn ^{1/2}\right)
        \\
        &\le C\varepsilon \Gr^{9/2}(1+\log \Gr)^{1/2}.
    \end{align*}
From \eqref{eq:NSE:rotatingvf}, we have
    \begin{align*}
        \partial_s \partial_y \overline{\om}=-\partial_y\overline{B}({\om},{\om})+\partial_y\Delta \overline{\om}+\partial_y \overline{f}.
    \end{align*}
Thus, we have
    \[
        I_3=-\frac{i\varepsilon}{2}(\widetilde{\om}^2 ,\partial_y\overline{B}(\om,\om))+\frac{i\varepsilon}{2}(\widetilde{\om}^2, \partial_y \overline{f})+\frac{i\varepsilon}{2}(\widetilde{\om}^2, \partial_y\Delta\overline{\om}).
    \]
To estimate $I_3$, we integrate by parts in each term and then apply Holder inequality, Agmon inequality, and Br\'ezis-Gallouet inequality to obtain
    \begin{align}
        |I_3|\le & C\varepsilon \Sob{\overline{B}(\om,\om)}{L^2_y}\int \Sob{\partial_y \widetilde{\om}}{L^2_y}\Sob{\widetilde{\om}}{L^\infty_y}\, dx+C\varepsilon \Sob{\overline{f}}{L^2_y} \int \Sob{\partial_y\widetilde{\om}}{L^2_y}\Sob{\widetilde{\om}}{L^\infty_y}\,dx \notag
        \\
        &+C\varepsilon \Sob{\partial_{yy} \overline{\om}}{L^2_y} \int \Sob{\partial_y \widetilde{\om}}{L^2_y}\Sob{\widetilde{\om}}{L^\infty_y}\,dx \notag
        \\
        \le & C\varepsilon \Sob{\nabla \om}{L^2}\Sob{u}{L^\infty}\int \Sob{\nabla \om}{L^2_y}^{3/2}\Sob{\om}{L^2_y}^{1/2}\, dx+C\varepsilon \Sob{ \overline{f}}{L^2} \int \Sob{\nabla \om}{L^2_y}^{3/2}\Sob{\om}{L^2_y}^{1/2}\,dx \notag
        \\
        &+C\varepsilon \Sob{\partial_{yy} \overline{\om}}{L^2} \int \Sob{\nabla \om}{L^2_y}^{3/2}\Sob{\om}{L^2_y}^{1/2}\,dx \notag
        \\
        \le & C\varepsilon\left(\Sob{\om}{L^2}^{{3}/{2}}\log\left(\frac{\Sob{\Delta \om}{L^2}}{c\Sob{\nabla \om}{L^2}}+1\right)^{1/2}\Sob{ \nabla \om}{L^2}^{5/2}+\Sob{f}{L^2}\Sob{\om}{L^2}^{{1}/{2}}\Sob{\nabla \om}{L^2}^{{3}/{2}}\right.  \notag
        \\ 
        &\left.\hspace{15 em}+\Sob{\Delta\om}{L^2}\Sob{\om}{L^2}^{{1}/{2}}\Sob{\nabla \om}{L^2}^{{3}/{2}}\right) \notag
        \\
        \le & C\varepsilon (\Gr^{5/2}(1+\log \Gr)^{{1}/{2}}\Sob{\nabla \om}{L^2}^2+\Gr^{{5}/{2}}\Sob{\nabla \om}{L^2}+\Gr^{3/2}\Sob{\Delta \om}{L^2}\Sob{\nabla \om}{L^2})\label{est:I3:appdx}.
    \end{align}
Taking the time average, we have
    \begin{align*}
        |\lbn I_3\rbn| \le C\varepsilon \Gr^{9/2}(1+\log \Gr)^{{1}/{2}}.
    \end{align*}
From \eqref{eq:NSE:rotatingvf}, we have
    \begin{align*}
        \partial_s \widetilde{\om}=-\widetilde{B}({\om},{\om})+\Delta \widetilde{\om}+ \widetilde{f}.
    \end{align*}
Thus
    \[
        J_2=i\veps(\check{f}, \widetilde{B}(\om,\om))-i\veps(\check{f}, \widetilde{f})-i\veps(\Check{f}, \Delta \widetilde{\om}).
    \]
Note that
    \begin{align}\notag
        (\check{f}, \widetilde{f})=\sum_{k_1\neq0}\frac{|k|^2}{k_1}|f_k|^2=\sum_{k_1>0}\frac{|k|^2}{k_1}|f_{(k_1,k_2)}|^2-\sum_{k_1>0}\frac{|k|^2}{k_1}|f_{(-k_1,k_2)}|^2.
    \end{align}
Since $f$ is real valued and satisfies symmetry conditions \eqref{eq:symmetry}, we have 
\[
        f^*_k=f_{-k},\quad f_{(k_1,-k_2)}=-f_{(k_1,k_2)}
\]
and
\[
        |f_{(-k_1,k_2)}|^2=|f_{(k_1,-k_2)}|^2=|f_{(k_1,k_2)}|^2
\]
Thus, we obtain
\[ (\check{f}, \widetilde{f})=0.\]
Applying the Cauchy-Schwarz inequality and Br\'ezis-Gallouet inequality, we obtain
    \begin{align}
        |J_2|&\le C\varepsilon \left(\Sob{\om}{L^2}\log\left(\frac{\Sob{\Delta \om}{L^2}}{c\Sob{\nabla \om}{L^2}}+1\right)^{1/2}\Sob{\nabla\om}{L^2}+\Sob{ f}{H^2}\Sob{\Delta \om}{L^2} \right)
        \notag\\
        &\le C\varepsilon\Gr(1+\log \Gr)^{{1}/{2}}\Sob{\nabla \om}{L^2}+C\Gr\Sob{\Delta \om}{L^2}\label{est:J2:appdx}.
    \end{align}
Taking the time average and using \eqref{est:time:om}, we have
    \begin{align*}
        |\lbn J_2\rbn| \le C\varepsilon \Gr^{3}(1+\log \Gr)^{{1}/{2}}.
    \end{align*}
Finally, we take the time average in \eqref{eq:NSE:rotatingv:non-zonal} and use the estimates for $I_2, I_3$ and $J_2$ to obtain
    \begin{align*}
        \lbn \Sob{\nabla\widetilde{\om}}{L^2}^2\rbn &\le -\lbn (B(\om, \om), \widetilde{\om} \rbn+ \lbn (f, \widetilde{\om})\rbn
        \\
        &\le |\lbn I_2\rbn|+|\lbn I_3\rbn|+|\lbn J_2\rbn|
        \\
        &\le C\varepsilon \Gr^{9/2}(1+\log \Gr)^{{1}/{2}}.
    \end{align*}
as claimed.
Next, we obtain the estimate for $\Sob{\widetilde{\omega}}{L^2}$. By Poincare inequality, we have
\[\Sob{\nabla \widetilde{\om}}{L^2}^2\ge \Sob{\widetilde{\om}}{L^2}^2.\]
Using this in \eqref{eq:NSE:rotatingv:non-zonal}, invoking the bounds in \eqref{est:I2:appdx}-\eqref{est:J2:appdx} and then integrating in time, we obtain
\begin{align*}
    &\Sob{\widetilde{\om}(t)}{L^2}^2+\int_0^t \Sob{\nabla \widetilde{\om}}{L^2}^2e^{s-t}\,ds\\&\le  e^{-t}\Sob{\widetilde{\om}(0)}{L^2}^2+C\varepsilon \Gr^{5/2}(1+\log \Gr)^{{1}/{2}}\int_0^t \Sob{\nabla \om(s)}{L^2}^2e^{s-t}\,ds\\
    &+C\veps \Gr^{{5}/{2}}\int_0^t\Sob{\nabla \om}{L^2}e^{s-t}\,ds+C\veps\Gr^{3/2}\int_0^t \Sob{\Delta \om}{L^2}\Sob{\nabla \om}{L^2}e^{s-t}\,ds\\
    &+C\veps\Gr \int_0^t \Sob{\Delta \om}{L^2}e^{s-t}\,ds.
\end{align*}
Applying the Cauchy–Schwarz inequality, we obtain
\begin{align*}
    &\Sob{\widetilde{\om}(t)}{L^2}^2+\int_0^t \Sob{\nabla \widetilde{\om}}{L^2}^2e^{s-t}\,ds\\
    &\le e^{-t}\Sob{\widetilde{\om}(0)}{L^2}^2+C\varepsilon \Gr^{5/2}(1+\log \Gr)^{{1}/{2}}\int_0^t \Sob{\nabla \om(s)}{L^2}^2e^{s-t}\,ds\\
    &+C\veps\Gr^{{5}/{2}}\left (\int_0^t\Sob{\nabla \om}{L^2}^2 e^{s-t}\,ds\right)^{1/2}+C\veps\Gr^{3/2}\left (\int_0^t \Sob{\Delta \om}{L^2}^2 e^{s-t}\right)^{1/2}\left(\int_0^t\Sob{\nabla \om}{L^2}^2e^{s-t}\,ds\right)^{1/2}\\
    &+C\veps\Gr \left (\int_0^t \Sob{\Delta \om}{L^2}^2 e^{s-t}\,ds\right)^{1/2}.
\end{align*}
We now shift the origin $t=0$ to a time $T_0$ sufficiently large as determined by \cref{thm:djoko:bounds} and use the estimates therein to obtain
\begin{align*}
    \sup_{s\ge T_0}\Sob{\widetilde{\om}(s)}{L^2}^2&\le C\veps \Gr^{9/2}(1+\log \Gr)^{1/2}+C\veps \Gr^{7/2}+C\veps \Gr^{9/2}+C\veps \Gr^{3}\\
    &\le C\veps \Gr^{9/2}(1+\log\Gr)^{1/2}.
\end{align*}
\end{proof}

\section{Proof of $\lb \bdy_t\Tr[\PP_\tN(t)\TT(t)\PP_\tN(t)]\rb=0$}\label{sect:app:B}

\begin{Lem}\label{lem:time:avg:bdd:op}
Let $\TT(t)$ be a bounded linear operator over $L^2$ such that
    \begin{align}\notag
        \|\TT\|:=\sup_{t\geq0}\|\TT(t)\|<\infty,
    \end{align}
where $\|\TT(t)\|$ denotes the corresponding operator norm. Then
    \begin{align}\notag
        \lb \bdy_t\Tr[\PP_\tN(t)\TT(t)\PP_\tN(t)]\rb=0.
    \end{align}
\end{Lem}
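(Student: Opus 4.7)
The plan is to show that $F(t) := \Tr[\PP_\tN(t)\TT(t)\PP_\tN(t)]$ is a uniformly bounded function of $t$, whereupon the identity $\lb \bdy_t F \rb = 0$ becomes a simple consequence of the fundamental theorem of calculus and the definition of the long-time average.

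First, I would establish a uniform-in-$t$ bound on $F(t)$. Since $\PP_\tN(t)$ is an orthogonal projection onto an $\tN$-dimensional subspace, letting $\{\phi_j(t)\}_{j=1}^{\tN}$ be any orthonormal basis of $\PP_\tN(t)L^2$, the cyclicity of the trace gives
    \begin{align*}
        F(t)=\Tr[\PP_\tN(t)\TT(t)\PP_\tN(t)]=\sum_{j=1}^{\tN}(\TT(t)\phi_j(t),\phi_j(t)),
    \end{align*}
and the Cauchy--Schwarz inequality together with $\|\phi_j(t)\|_{L^2}=1$ yields
    \begin{align*}
        |F(t)|\leq \sum_{j=1}^{\tN}\|\TT(t)\|\,\|\phi_j(t)\|_{L^2}^2\leq \tN\|\TT\|,
    \end{align*}
where $\|\TT\|=\sup_{t\geq 0}\|\TT(t)\|_{op}<\infty$ by hypothesis. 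Hence $F\in L^\infty(0,\infty)$.

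Next, assuming (as is implicit in the statement) that $F$ is absolutely continuous in $t$ so that $\int_0^T \bdy_t F(s)\,ds = F(T)-F(0)$, I would apply the definition of the long-time average:
    \begin{align*}
        \lb \bdy_t F\rb=\limsup_{T\goesto\infty}\frac{1}{T}\int_0^T \bdy_t F(s)\,ds=\limsup_{T\goesto\infty}\frac{F(T)-F(0)}{T}.
    \end{align*}
Since $|F(T)-F(0)|\leq 2\tN\|\TT\|$ uniformly in $T$, the right-hand side is bounded above in absolute value by $\limsup_{T\goesto\infty}(2\tN\|\TT\|/T)=0$, which gives the claim.

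The only genuine subtlety here is the justification of the absolute continuity of $F$, which ultimately rests on the smoothness of $t\mapsto\PP_\tN(t)$ (coming from the smoothness of the solutions $\delta z_j(t)$ of the well-posed linearized equation \eqref{eq:NSE:rotatingvf:lin:z}) and that of $t\mapsto\TT(t)$ in the specific application to $\TT=\bar{\BB}_1$, where the smoothness follows from the uniform-in-time Sobolev bounds in \cref{thm:djoko:bounds}. In the general statement this is implicit, and I would simply note that under the standing regularity assumptions on $\PP_\tN$ and $\TT$, integration by parts in the time variable is legitimate; the quantitative content of the lemma is the uniform bound on $F$, which is what kills the boundary term in the long-time limit.
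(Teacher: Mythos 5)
Your proposal is correct and follows essentially the same route as the paper: bound $\Tr[\PP_\tN(t)\TT(t)\PP_\tN(t)]$ uniformly by $\tN\|\TT\|$ via the basis representation and Cauchy--Schwarz, then conclude that the long-time average of its derivative vanishes since the difference quotient $(F(T)-F(0))/T$ is $O(1/T)$. Your explicit remark on the absolute continuity of $t\mapsto F(t)$ is a point the paper leaves implicit, but it does not change the argument.
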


\begin{proof}
Recall that $\{\phi_j(t)\}_{j=1}^\tN$ is an orthonormal basis of $\PP_\tN L^2$ for each $t\geq 0$. Due to the invariance of the trace with respect to orthonormal basis, it follows that
    \begin{align}\notag
        \Tr[\PP_\tN(t)\TT(t)\PP_\tN(t)]&=\sum_{j=1}^\infty(\PP_\tN(t)\TT(t)\PP_\tN(t)\chi_j,\chi_j)=\sum_{j=1}^\infty(\TT(t)\PP_N(t)\chi_j,\PP_\tN(t)\chi_j)\notag
        \\
        &=\sum_{j=1}^\tN(\TT\PP_N\phi_j(t),\PP_\tN\phi_j(t))=\sum_{j=1}^\tN(\TT(t)\phi_j(t),\phi_j(t)).\notag
    \end{align}
By the Cauchy-Schwarz inequality, it follows that
    \begin{align}\notag
        \sup_{t\geq0}|\Tr[\PP_\tN(t)\TT(t)\PP_\tN(t)]|\leq\sup_{t\geq0}\sum_{j=1}^N\|\TT(t)\phi_j(t)\|_{L^2}\|\phi_j(t)\|_{L^2}\leq N\|\TT\|,
    \end{align}
where we have used the fact that the $\phi_j$ are orthonormal. Therefore
    \begin{align}
        \lb \bdy_t\Tr[\PP_\tN(t)\TT(t)\PP_\tN(t)]\rb&=\limsup_{t\goesto\infty}\frac{\Tr[\PP_\tN(t)\TT(t)\PP_\tN(t)-\Tr[\PP_\tN(0)\TT(0)\PP_\tN(0)]}{t}\notag
        \\
        &\leq\limsup_{t\goesto\infty}\frac{2N\|\TT\|}{t}=0,\notag
    \end{align}
as claimed.
\end{proof}

\hfill

\hfill

\noindent Aseel Farhat\\
{\footnotesize
Department of Mathematics\\
Florida State University \\
Email: \url{afarhat@fsu.edu} \\
Department of Mathematics \\
University of Virginia \\
Email: \url{af7py@virginia.edu}\\
}

\noindent Anuj Kumar\\
{\footnotesize
Department of Mathematics\\
Florida State University \\
Email: \url{akumar241@outlook.com}\\
}

\noindent Vincent R. Martinez\\
{\footnotesize
Department of Mathematics \& Statistics\\
CUNY Hunter College \\
Department of Mathematics \\
CUNY Graduate Center \\
Web: \url{http://math.hunter.cuny.edu/vmartine/}\\
Email: \url{vrmartinez@hunter.cuny.edu}
}

\end{document}